\newtheorem{theorem}{Theorem}[section]
\newtheorem{corollary}[theorem]{Corollary}
\newtheorem{definition}[theorem]{Definition}
\newtheorem{example}[theorem]{Example}
\newtheorem{lemma}[theorem]{Lemma}
\newtheorem{question}[theorem]{Question}
\newtheorem{remark}[theorem]{Remark}
\begin{document}

\title{Some questions related to free-by-cyclic groups and tubular groups}

\author{Xiaolei Wu}
\address{Shanghai Center for Mathematical Sciences, Jiangwan Campus, Fudan University, No.2005 Songhu Road, Shanghai, 200438, P.R. China}
\email{xiaoleiwu@fudan.edu.cn}

\author{Shengkui Ye}
\address{NYU Shanghai, No.567 Yangsi West 
  Rd, Pudong New Area, Shanghai, 200124, P.R. China \\
NYU-ECNU Institute of Mathematical Sciences at NYU Shanghai, 3663 Zhongshan Road North, Shanghai, 200062, China}
\email{sy55@nyu.edu}

\subjclass[2010]{20E26, 20E06, 20F65}

\date{April, 2025}

\keywords{Free-by-cyclic groups, tubular groups, virtually special groups, Property (VRC), RFRS}

\maketitle

\begin{abstract}
We prove that a CAT(0) free-by-cyclic tubular group with one vertex is
virtually special, but many of them cannot virtually act freely and cocompactly on CAT(0) cube complexes. This partially confirms a question of
Brady--Soroko \cite[Section 9: Question 1]{BS} and answers a question of Lyman \cite[Question 1]{Ly} in the negative. Furthermore, we provide examples of free-by-cyclic groups
amalgamated along cyclic subgroups that are not virtually free-by-cyclic. This answers
negatively a question of Hagen--Wise \cite[Remark 3.6]{hw}. Lastly, we exhibit an example of a cyclic-subgroup-separable tubular group that does not have the property (VRC) (i.e. every cyclic subgroup is a virtual retract). This answers a question of Minasyan \cite[Question 11.6]{min} in the negative. 
\end{abstract}


\section{Introduction}

This paper aims to study some questions that lie at the intersection of free-by-cyclic groups and tubular groups. Recall that a group $G$ is called free-by-cyclic if it surjects to $\mathbb{Z}$ with a free kernel (of possibly infinite rank). If the kernel is a free group of rank $n,$ we call $G$ an $F_{n}$-by-$\mathbb{Z}$ group. A tubular group is a group that splits as a finite graph of groups with $\mathbb{Z}^2$ vertex groups and $\mathbb{Z}$ edge groups. There are many examples of free-by-cyclic groups that are tubular. For example, Gersten's famous non-$\mathrm{CAT(0)}$ example is tubular \cite{ge}.    Button gives a criterion to determine whether a tubular group is free-by-cyclic \cite[Theorem 2.1]{bu}.  Our study is motivated by the following two questions:

\begin{question} \cite[Section 9, Question 1]{BS}
\label{prob1}Does every $\mathrm{CAT(0)}$ free-by-cyclic group virtually embed in a right-angled Artin group $(\mathrm{RAAG})$?
\end{question}
\begin{question} \cite[Problem 5.3]{page}
\label{prob1.1} What’s the relationship between being $\mathrm{CAT(0)}$ and (virtually) $\mathrm{RFRS}$ for free-by-cyclic groups?
\end{question}

Since a virtually special group (i.e., a subgroup of a RAAG) is virtually RFRS \cite[Corollary 2.3]{Ag08}, a positive answer to Question \ref{prob1} would solve Question \ref{prob1.1} in one direction. Recall that it is already a hard question when a free-by-cyclic group is $\mathrm{CAT(0)}$ (see for example 
Bridson's problem list \cite[Question 7.9]{Bri}). The far-reaching work of Agol \cite{Ag} and Wise \cite{wi12},
together with Hagen--Wise \cite{ha1,ha2} implies that a hyperbolic
$F_n$-by-$\mathbb{Z}$ group is virtually compact special. Button
and Kropholler \cite{bk} prove that every $F_{2}$-by-$\mathbb{Z}$ group is
the fundamental group of a non-positively curved cube complex of dimension
2. On the other hand, every $F_{2}$-by-$\mathbb{Z}$ group can be represented as a
non-positively curved punctured-torus bundle over the circle, implying that every $%
F_{2} $-by-$\mathbb{Z}$ group is $\mathrm{CAT(0)}$ and virtually special
by the work of Liu \cite[Theorem 1.1]{liu}. When $n\ge 3$, Samuelson \cite{sa} proves that the $F_{n}\rtimes
_{\phi }\mathbb{Z}$ groups are $\mathrm{CAT(0)}$ for some upper triangular
automorphism $\phi$.  Lyman \cite{Ly} further provides several infinite families
of polynomially growing automorphisms of free groups whose mapping tori are
$\mathrm{CAT(0)}$. In a recent paper, Munro and Petyt \cite{mp} discuss the compact specialness
of free-by-cyclic groups using the median metric space. On the other hand, the study of cubulating tubular groups is in a much better shape.  In fact, Wise in \cite[Theorem 1.1]{wi14} gives an easy criterion to determine when a tubular group acts freely on a $\mathrm{CAT(0)}$ cube complex. Woodhouse provides a way to produce free actions of tubular groups on finite dimensional $\mathrm{CAT(0)}$ cube complexes \cite[Theorem 1.2]{Wood2} and proves further in \cite[Theorem 1.1]{Wood} that a tubular group is virtually special if and only if it acts freely on a finite dimensional $\mathrm{CAT(0)}$ cube complex.

Let $F_3=\langle a,b,c \rangle$ be a free group of rank 3. Consider the following automorphism of $\Psi: F_3 \rightarrow F_3:$
$$
a \mapsto a, b \mapsto a^m b a^m, c  \mapsto  a^n b a^n $$
for  integers $m,n$.  Lyman \cite{Ly} proves that the corresponding group $F_{3}\rtimes _{\Psi }\mathbb{Z}$ (among others) are $\mathrm{CAT(0)}$.
Based on Question \ref{prob1}, Lyman \cite[Question 1.1]{Ly} further asks the
following question:

\begin{question}
\label{prob2} May these $\mathrm{CAT(0)}$ free-by-cyclic groups (eg. $F_{n}\rtimes _{\Psi }\mathbb{Z}$) be cocompactly
cubulated? Is the resulting cube complex virtually (co-)special?
\end{question}
We  answer Question \ref{prob2} negatively by viewing it as a tubular group. 
\begin{theorem}\label{thm:Lyman} [Example \ref{eg1}]
  For any integers $m,n$, the group $F_{3}\rtimes _{\Psi }\mathbb{Z}$ is virtually special. But  it virtually acts on a CAT(0) cube complex freely and cocompactly if and only if $|m| = |n|$. 
\end{theorem}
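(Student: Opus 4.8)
The plan is to recognize $G:=F_{3}\rtimes_{\Psi}\mathbb{Z}=\langle a,b,c,t\mid [a,t],\ tbt^{-1}=a^{m}ba^{m},\ tct^{-1}=a^{n}ca^{n}\rangle$ as a one-vertex tubular group and then run the known cubulation machinery. Put $V:=\langle a,t\rangle\cong\mathbb{Z}^{2}$. Using $[a,t]=1$, the relation $tbt^{-1}=a^{m}ba^{m}$ rewrites as $b^{-1}(a^{-m}t)b=a^{m}t$, and $tct^{-1}=a^{n}ca^{n}$ rewrites as $c^{-1}(a^{-n}t)c=a^{n}t$. Hence $G$ is the fundamental group of a graph of groups with a single vertex group $V$ and two loop edges, with stable letters $b$ and $c$, whose associated cyclic subgroups are $\langle a^{\mp m}t\rangle$ and $\langle a^{\mp n}t\rangle$. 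Using the basis $(a,t)$ of $V$, the four edge-group slopes into this $\mathbb{Z}^{2}$ are $(\pm m,1)$ and $(\pm n,1)$, and they are pairwise distinct precisely when $|m|\neq|n|$. This is the only input specific to $\Psi$; everything below is about this slope configuration.

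\textbf{Virtual specialness.} The pair of directions $\{(1,0),(0,1)\}$ of $V$ is an equitable set: for each loop the two slopes $(\pm m,1)$ (resp.\ $(\pm n,1)$) have equal intersection number with $(1,0)$ and with $(0,1)$, namely $1$ and $|m|$ (resp.\ $1$ and $|n|$). Since this set spans $\mathbb{Q}^{2}$, $G$ acts freely on a CAT(0) cube complex by Wise's criterion \cite[Theorem~1.1]{wi14}, and on a finite-dimensional one by \cite[Theorem~1.2]{Wood2}; Woodhouse's theorem \cite[Theorem~1.1]{Wood} then yields that $G$ is virtually special. (This is the instance for $F_{3}\rtimes_{\Psi}\mathbb{Z}$ of the general fact that a CAT(0) one-vertex free-by-cyclic tubular group is virtually special.)

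\textbf{The case $|m|=|n|$.} Composing $\Psi$ with the inner automorphism by $a^{-m}$ does not change the mapping torus, and the resulting automorphism fixes one of $bc,\,bc^{-1}$; call it $e$. Taking a free basis of $F_{3}$ containing $a$ and $e$ (e.g.\ $\{a,b,e\}$) exhibits $G$ as an HNN extension of $\langle a,e\rangle\times\langle t\rangle\cong F_{2}\times\mathbb{Z}$ with stable letter $b$ over the infinite cyclic subgroups $\langle t\rangle$ and $\langle a^{2m}t\rangle$; equivalently, in the tubular presentation \emph{all} edge-slopes now lie in the two-element set $\{(m,1),(-m,1)\}$, and $G$ acquires the extra $\mathbb{Z}^{2}=\langle e,t\rangle$ not commensurable to $V$. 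From here I would assemble a cocompact cubulation of a finite-index subgroup of $G$, either by a cubical combination argument over the cyclic edge group, or by recognizing this reduced group, up to finite index, as the fundamental group of a graph manifold with boundary (a punctured-surface bundle with reducible, Dehn-twist-type monodromy), which is virtually compact special by \cite{liu} and in particular virtually acts freely and cocompactly on a CAT(0) cube complex.

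\textbf{The case $|m|\neq|n|$ — the crux.} Here one must show $G$ does not \emph{virtually} act freely and cocompactly on any CAT(0) cube complex. First reduce to $G$ itself: a finite-index subgroup of $G$ is again a free-by-cyclic tubular group, and each of its vertex $\mathbb{Z}^{2}$'s still carries four distinct edge-slope directions, so the hypothesis persists. Now suppose $G$ acts freely and cocompactly on a CAT(0) cube complex $X$. By the cubical flat torus theorem, $V=\langle a,t\rangle$ stabilizes and acts cocompactly on a combinatorial flat $P\subseteq X$; since the four neighbouring flats $b^{\pm1}P$ and $c^{\pm1}P$ meet $P$ along the axes of $a^{\pm m}t$ and $a^{\pm n}t$, a separating hyperplane argument forces each of the four slopes $(\pm m,1),(\pm n,1)$ to be a hyperplane direction of $P$. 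The hard part — where essentially all the work lies — is to turn this, together with cocompactness and the tubular combinatorics, into a contradiction while also excluding high-dimensional cube complexes and finite-index subgroups: I expect to prove that a cocompact cubulation of a tubular group forces, at each vertex $\mathbb{Z}^{2}$, an equitable set that is in addition ``dimension-balanced'' in the sense controlling the cube-complex dimension, and that no such set can exist at a vertex carrying four distinct slopes occurring in the symmetric pairs $\{(m,1),(-m,1)\}$ and $\{(n,1),(-n,1)\}$ with $|m|\neq|n|$ — an analysis very much in the spirit of Brady--Soroko's study \cite{BS} of the CAT(0) cube complex dimension of tubular groups.
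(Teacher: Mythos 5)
Your setup and the virtual-specialness half are essentially the paper's argument: the tubular presentation with vertex group $\langle a,t\rangle$ and slopes $(\pm m,1),(\pm n,1)$ is correct, and your equitable set $\{(1,0),(0,1)\}$ is (up to scaling) exactly the set $\{w_1-v_1,w_1+v_1\}=\{(2m,0),(0,2)\}$ used in the paper's Theorem \ref{th3}; your computation that both ends of each edge have equal intersection number with each element of $S$ is precisely the non-dilation check needed to invoke \cite[Theorem 1.2]{Wood2} and then \cite[Theorem 1.1]{Wood}. That part stands.

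The cocompact-cubulation dichotomy, however, is left as a plan rather than a proof in both directions, and the missing ingredient is concrete: Wise's criterion \cite[Corollary 5.10]{wi14}, which says a tubular group acts freely and cocompactly on a CAT(0) cube complex if and only if each vertex group carries at most two parallelism classes of edge groups and the group contains no $BS(p,q)$ with $|p|\neq|q|$. For $|m|\neq|n|$ you correctly observe that the four slopes give at least three parallelism classes and that this configuration persists in any finite-index subgroup (which is again tubular on the same Bass--Serre tree, with edge groups of finite index in the originals, hence representing the same classes — this is the paper's Lemma \ref{lem:vir-cocom--cube}(1)); but you then announce that you ``expect to prove'' a dimension-balancing obstruction, i.e.\ you do not actually derive the contradiction. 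With Wise's corollary in hand no new obstruction theory is needed. For $|m|=|n|$ the same corollary applies directly: there are exactly two parallelism classes, and the bad Baumslag--Solitar subgroups are excluded because $G$ is CAT(0) (cyclic subgroups are undistorted), so $G$ itself acts freely and cocompactly — no finite-index passage is needed. Your proposed substitutes (a cubical combination over the cyclic edge group, or a graph-manifold identification via \cite{liu}) are not carried out and would require hypotheses (relative hyperbolicity/malnormality for combination theorems, or an actual $3$-manifold structure on an $F_3$-by-$\mathbb{Z}$ group) that you neither verify nor clearly have.
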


Button  (cf. \cite{but} remarks after Corollary 3) also notices that there exist non-compact virtually special tubular groups whose underlying graph is a
tree.  More generally, we study free-by-cyclic groups $G=G(\{p_{i},q_{i}\}_{i=1}^{n-1})=F_{n}\rtimes _{\phi }%
\mathbb{Z}$ of the following form: let $F_{n}$ be a free group generated by $%
\{a_{0},a_{1},...,a_{n-1}\}$ and $\phi :F_{n}\rightarrow F_{n}$ be a
homomorphism given by 
\begin{eqnarray*}
a_{0} &\mapsto &a_{0}, \\
a_{i} &\mapsto &a_0^{p_{i}}a_{i}a_0^{q_{i}},i=1,2,...,n-1,
\end{eqnarray*}%
for some integers $p_{1},\cdots ,p_{n-1},q_{1},\cdots ,q_{n-1}$. Note that
the semi-direct product $G=F_{n}\rtimes _{\phi }\mathbb{Z}$ has a
presentation:
\begin{equation*}
\langle
a_{0},a_{1},...,a_{n-1},t:ta_{0}t^{-1}=a_{0},ta_{i}t^{-1}=a_{0}^{p_{i}}a_{i}a_{0}^{q_{i}},i=1,2,...,n-1\rangle .
\end{equation*}%
Rewrite the relator $ta_{i}t^{-1}=a_{0}^{p_{i}}a_{i}a_{0}^{q_{i}}$ as 
$a_{i}a_{0}^{q_{i}}ta_{i}^{-1} = a_{0}^{-p_{i}}t$ for each $i.$ One sees that $G$ can be viewed as a
tubular group with the vertex group $G_{v}=\mathbb{Z}^{2}=\langle a_{0},t\rangle 
$ and stable letters $a_{i}$ for $1\leq i\leq n-1$. The group $G$ is
not hyperbolic since it contains $\mathbb{Z}^{2}$ as a subgroup. When $%
n=3,p_{1}=p_{2}=0$, the group $G$ is studied by Gersten \cite{ge}, who
proves that when $q_{1}\neq q_{2}$ are positive integers, the group $G$ is not $\mathrm{CAT(0)}$ (actually $G$
cannot act properly by semi-simple isometries on any $\mathrm{CAT(0)}$ metric
space)$.$ When $p_{i}= \pm  q_{i}$ (generally symmetric automorphism $\phi $),
the group $G$ is studied by Lyman \cite{Ly} who proves that some of them
are $\mathrm{CAT(0)}$.

\begin{theorem} [Corollary \ref{cor1} and Theorem \ref{th7-1}]
\label{th7}Let $G=G(\{p_{i},q_{i}\}_{i=1}^{n-1})=F_{n}\rtimes _{\phi }\mathbb{Z}$. The following are
equivalent:

\begin{description}
\item[i)] $G$ is virtually special;

\item[ii)] $G$ is $\mathrm{CAT(0)}$;

\item[iii)] either
\end{description}

1) $p_{i}=-q_{i}$ for each $i=1,2,...,n-1$ or

2) $p_{s}\neq -q_{s}$ for some $s$ and 
\begin{equation*}
q_{i}(q_{i}+p_{s}-q_{s})=p_{i}(p_{i}-p_{s}+q_{s})
\end{equation*}%
for each $i=1,2,...,n-1.$
\end{theorem}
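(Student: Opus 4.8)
The plan is to run the two cycles $\mathrm{(iii)}\Rightarrow\mathrm{(ii)}\Rightarrow\mathrm{(iii)}$ and $\mathrm{(iii)}\Rightarrow\mathrm{(i)}\Rightarrow\mathrm{(iii)}$, using the tubular structure of $G$ already displayed above: one vertex group $\mathbb{Z}^2=\langle a_0,t\rangle$ and, for each $i=1,\dots,n-1$, a loop edge whose stable letter $a_i$ conjugates $a_0^{q_i}t$ to $a_0^{-p_i}t$; thus the two inclusions of the $i$-th edge group $\mathbb{Z}$ into $\mathbb{Z}^2$ hit the lines $\ell_i=\langle(q_i,1)\rangle$ and $m_i=\langle(-p_i,1)\rangle$ in the basis $(a_0,t)$. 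First I would record an elementary reformulation of (iii): setting $S=\{i:p_i\neq -q_i\}$, the identity $q_i(q_i+p_s-q_s)-p_i(p_i-p_s+q_s)=(p_i+q_i)\bigl((q_i-p_i)+(p_s-q_s)\bigr)$ shows that (iii) holds if and only if $S=\varnothing$, or the difference $p_i-q_i$ takes one constant value $\delta$ over all $i\in S$. I would then observe that this is in turn equivalent to the existence of a Euclidean norm $\|\cdot\|$ on $\mathbb{R}^2$ with $\|(q_i,1)\|=\|(-p_i,1)\|$ for every $i$: expanding with $A=\|(1,0)\|^2$ and $B=\langle(1,0),(0,1)\rangle$ gives $\|(q_i,1)\|^2-\|(-p_i,1)\|^2=(p_i+q_i)\bigl(A(q_i-p_i)+2B\bigr)$, so one takes $\langle a_0,a_0\rangle=1$, $\langle a_0,t\rangle=\delta/2$, $\langle t,t\rangle=C$ with $C$ large (any inner product if $S=\varnothing$).

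For $\mathrm{(iii)}\Rightarrow\mathrm{(ii)}$ I would build an explicit CAT(0) space on which $G$ acts geometrically. Give each Bass--Serre vertex space the flat metric on $\mathbb{R}^2$ coming from the norm $\|\cdot\|$ above; then every translate of $\ell_i$ and of $m_i$ is a geodesic line, and since $\|(q_i,1)\|=\|(-p_i,1)\|$ the $i$-th edge space (a suitably scaled Euclidean line $\mathbb{R}$) glues isometrically onto the relevant translates of $\ell_i$ and $m_i$. The resulting tree of Euclidean planes glued along totally geodesic lines over the Bass--Serre tree of $G$ is CAT(0) by the gluing theorem for trees of CAT(0) spaces along convex subspaces, and $G$ acts on it properly and cocompactly. (When $S=\varnothing$ the degenerate edges with $\ell_i=m_i$ are harmless.)

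For the converse implications $\mathrm{(ii)}\Rightarrow\mathrm{(iii)}$ and $\mathrm{(i)}\Rightarrow\mathrm{(iii)}$ I would use the Flat Torus Theorem, in the spirit of Gersten \cite{ge}. If $G$, or in case (i) a finite-index subgroup $G_0\le G$ which we may take to embed in a right-angled Artin group and hence to act freely, properly and semisimply on a CAT(0) cube complex $X$, acts properly by semisimple isometries on a CAT(0) space $X$, then the free abelian subgroup $\langle a_0,t\rangle$ (resp.\ $\langle a_0,t\rangle\cap G_0$) stabilises a Euclidean plane $F\subseteq X$ on which it acts cocompactly by translations. Translation length in $X$ is a conjugacy invariant, is multiplicative on powers for semisimple isometries, and is computed on $F$ for elements of a finite-index subgroup of $\langle a_0,t\rangle$; so the relation $a_i(a_0^{q_i}t)a_i^{-1}=a_0^{-p_i}t$ forces $\|(q_i,1)\|=\|(-p_i,1)\|$ for every $i$, where $\|\cdot\|$ is the Euclidean norm of $F$. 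By the reformulation in the first paragraph this is exactly condition (iii). This already proves $\mathrm{(ii)}\Leftrightarrow\mathrm{(iii)}$ (which may also be read off the general CAT(0) criterion for tubular groups established earlier, Corollary \ref{cor1}).

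The remaining implication $\mathrm{(iii)}\Rightarrow\mathrm{(i)}$ is, I expect, the hard part: one must produce a \emph{free action on a finite-dimensional} CAT(0) cube complex, so that Woodhouse's characterisation \cite[Theorem 1.1]{Wood} of virtually special tubular groups applies. I would verify Wise's equitable-set criterion \cite[Theorem 1.1]{wi14} for the vertex group $\mathbb{Z}^2$ by a pairing trick: let $\sigma$ be the involution $(a,b)\mapsto(-a-\delta b,\,b)$ of $\mathbb{Z}^2$ (descending to lines; the identity if $S=\varnothing$), and note that $p_i=q_i+\delta$ gives $\det\bigl((q_i,1),\sigma u\bigr)=-\det\bigl((-p_i,1),u\bigr)$ and $\det\bigl((-p_i,1),\sigma u\bigr)=\det\bigl((q_i,1),u\bigr)$, so that a $\sigma$-matched pair of walls $\{u,\sigma u\}$ contributes equally to the crossing counts along $\ell_i$ and along $m_i$, for every $i\in S$ (and trivially for $i\notin S$, where $\ell_i=m_i$). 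Hence any finite $\sigma$-invariant multiset of primitive lines in $\mathbb{Z}^2$, enlarged by the coordinate axes as needed to separate $\mathbb{Z}^2$-orbits, is an equitable set, and \cite[Theorem 1.1]{wi14} yields a free action of $G$ on a CAT(0) cube complex. The delicate step is finite-dimensionality: equitable sets alone do not guarantee it, so I would invoke Woodhouse's finite-dimensionality criterion \cite[Theorem 1.2]{Wood2} and check that the wall configuration coming from the $\sigma$-pairing satisfies it --- this is precisely where the geometric rigidity of (iii) (equivalently, the single compatible flat metric constructed in the second paragraph) has to be converted into the combinatorial boundedness Woodhouse requires. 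Given that, \cite[Theorem 1.1]{Wood} delivers (i) and closes the cycle; alternatively $\mathrm{(ii)}\Rightarrow\mathrm{(i)}$ follows from the general statement that a CAT(0) one-vertex free-by-cyclic tubular group is virtually special (Theorem \ref{th7-1}).
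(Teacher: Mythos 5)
Your reduction of condition (iii) to ``$p_i-q_i$ is constant on $S=\{i:p_i\neq -q_i\}$'', and hence to the existence of a Euclidean norm with $\|(q_i,1)\|=\|(-p_i,1)\|$ for all $i$, is correct and matches the paper's route through its one-vertex CAT(0) criterion (Lemma \ref{lem1} and Theorem \ref{th1}); likewise $\mathrm{(iii)}\Rightarrow\mathrm{(ii)}$ by gluing flats and $\mathrm{(ii)}\Rightarrow\mathrm{(iii)}$ by the Flat Torus Theorem are in the same spirit as the paper. The genuine gap is in $\mathrm{(iii)}\Rightarrow\mathrm{(i)}$, which you yourself flag as unfinished: an equitable set only yields a free action on a possibly \emph{infinite-dimensional} cube complex, and Woodhouse's theorem requires every immersed wall to be non-dilated. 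Your $\sigma$-pairing argument shows that the \emph{pair} $\{u,\sigma u\}$ has matching total crossing numbers with the two ends of each edge space, which establishes equitability, but non-dilation is a condition on the individual weights $\omega(\alpha)=|\det[e_{\rightarrow},\alpha_{\rightarrow}]|/|\det[e_{\leftarrow},\alpha_{\leftarrow}]|$ along closed paths in the quotient graph $\Omega$; for a generic $\sigma$-invariant multiset these ratios need not multiply to a finite set of values, and nothing in your sketch controls them. The paper resolves exactly this point by choosing the specific equitable set $S=\{w_1-v_1,\,w_1+v_1\}$ --- which, in your language, consists of the two eigenlines of your involution $\sigma$ (note $\sigma(w_1-v_1)=-(w_1-v_1)$ and $\sigma(w_1+v_1)=w_1+v_1$) --- and then verifies by the determinant identities of Theorem \ref{th3}, together with the arithmetic consequence $a_ib_i=c_id_i$ of CAT(0)-ness (Lemma \ref{lemcat}), that \emph{each individual} wall meets the two ends of every edge space in the same number of points, so every weight equals $1$ and non-dilation is immediate. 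Without this (or an equivalent computation) the implication $\mathrm{(iii)}\Rightarrow\mathrm{(i)}$, which is the substance of the theorem, is not proved.

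A secondary issue: in your $\mathrm{(i)}\Rightarrow\mathrm{(iii)}$ argument the relation $a_i(a_0^{q_i}t)a_i^{-1}=a_0^{-p_i}t$ involves the conjugator $a_i$, which need not lie in the finite-index subgroup $G_0$ acting on the cube complex, so conjugacy-invariance of translation length for the $G_0$-action does not directly apply to $(a_0^{q_i}t)^N$ and $(a_0^{-p_i}t)^N$. This can be repaired by inducing the action to a product over the cosets of a normal finite-index subgroup (as in Lemma \ref{vrc-euclid}), or avoided altogether by quoting Wise's result that a virtually special tubular group is CAT(0), which is how Corollary \ref{cor1} handles this direction; but as written the step is not justified.
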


\begin{remark}
   Theorem \ref{th7} implies that when either of the two conditions in iii)
holds, the group $F_{n}\rtimes _{\phi }\mathbb{Z}$ is linear (actually $%
\mathbb{Z}$-linear). It is still an open question whether every $F_{n}$-by-$%
\mathbb{Z}$ group is linear; see for example \cite[Problem 5]{ds}.

\end{remark}

More generally, let
\begin{equation*}
v_{1},v_{2},...,v_{k},w_{1},w_{2},...,w_{k}\in \mathbb{Z}^{2}
\end{equation*}
be $2k$ nonzero vectors and 
\begin{equation*}
G=\langle \mathbb{Z}^{2},s_{i}|s_{i}v_{i}s_{i}^{-1}=w_{i},i=1,2,...,k\rangle
\end{equation*}
be the multiple HNN extensions.

\begin{theorem} [Theorem \ref{thm:fr-cyc-vspel}]
\label{th4} Let $G=\langle \mathbb{Z}%
^{2},s_{i}|s_{i}v_{i}s_{i}^{-1}=w_{i},i=1,2,...,k\rangle $ be a $\mathrm{CAT(0)}$
free-by-cyclic tubular group. Then $G$ is virtually special.
\end{theorem}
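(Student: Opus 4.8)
The plan is to reduce, via Woodhouse \cite[Theorem 1.1]{Wood}, to the statement that $G$ acts freely on a \emph{finite-dimensional} $\mathrm{CAT}(0)$ cube complex; note that we do not aim for a cocompact action, since by Theorem \ref{thm:Lyman} cocompactness genuinely fails for some virtually special examples, so the cube complex we build will in general have infinitely many $G$-orbits of cells. Such free actions are produced from Wise's equitable sets \cite[Theorem 1.1]{wi14}, and the resulting dual cube complex is finite-dimensional exactly under the combinatorial criterion of Woodhouse \cite[Theorem 1.2]{Wood2}. So the problem becomes: exhibit an equitable set for $G$ whose dual complex is finite-dimensional, with the $\mathrm{CAT}(0)$ hypothesis used precisely for the dimension bound.

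First I would normalise $G$ using the free-by-cyclic hypothesis. Fix a fibration $\phi\colon G\twoheadrightarrow\mathbb{Z}$ with free kernel $K$. Since $\mathbb{Z}^2$ does not embed in the free group $K$, the restriction $\phi|_{\mathbb{Z}^2}$ has rank one, and $\phi(v_i)=\phi(w_i)$ because $w_i=s_iv_is_i^{-1}$. I claim this common value is non-zero. If $\phi(v_i)=0$ for some $i$, then $v_i$ and $w_i$ both lie in the rank-one subgroup $\ker(\phi|_{\mathbb{Z}^2})$, so the one-edge sub-HNN extension $H_i=\langle\mathbb{Z}^2,s_i\rangle$ has parallel attaching vectors; a short case analysis on whether $\phi(s_i)=0$ then shows $\ker(\phi|_{H_i})$ is either a Baumslag--Solitar group or an extension of $\mathbb{Z}$ by a non-trivial free group, hence not free, contradicting $\ker(\phi|_{H_i})\le K$. (Alternatively one quotes Button's criterion \cite[Theorem 2.1]{bu}, which forbids parallel attaching vectors for free-by-cyclic tubular groups.) Choosing a basis of $\mathbb{Z}^2$ adapted to $\ker(\phi|_{\mathbb{Z}^2})$, we may therefore assume $v_i=(p_i,m_i)$ and $w_i=(q_i,m_i)$ with every $m_i\neq 0$.

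In this normal form the line $\ell_0=\langle(1,0)\rangle=\ker(\phi|_{\mathbb{Z}^2})$ crosses each $\langle v_i\rangle$ and each $\langle w_i\rangle$ exactly $|m_i|$ times: it is the wall carried by the fibres of $\phi$, automatically compatible with every edge. Adjoining the reflection pair $\ell_\pm=\langle(a,\pm 1)\rangle$ for an integer $a$ with $|a|\,|m_i|\ge\max(|p_i|,|q_i|)$ for all $i$ yields an equitable set: a determinant computation gives that $\{\ell_+,\ell_-\}$ crosses both $\langle v_i\rangle$ and $\langle w_i\rangle$ exactly $2|am_i|$ times. Since $m_i\neq 0$, none of $\ell_0,\ell_+,\ell_-$ is parallel to an attaching vector, and the three distinct directions force $\mathbb{Z}^2$ to act freely on the dual complex; hence $G$ already acts freely on a $\mathrm{CAT}(0)$ cube complex. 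But, as Gersten's non-$\mathrm{CAT}(0)$ group shows (it is free-by-cyclic, tubular, and cubulable by exactly this recipe), such a complex can be forced to be infinite-dimensional.

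The crux is therefore finite-dimensionality. By \cite[Theorem 1.2]{Wood2} the dimension of the dual complex of an equitable set is governed by how much a wall can ``spiral'' as it travels around loops of the Bass--Serre graph — for a one-vertex group, by the discrepancies between the shears $p_i$ and $q_i$ along such loops, which is precisely what makes Gersten's walls spiral without bound. The heart of the proof is to show that $\mathrm{CAT}(0)$-ness of $G$ forbids this for a suitably chosen equitable set: I would run the non-positively-curved-metric analysis of the tube complex that underlies Theorem \ref{th7} (a Gersten-type obstruction argument), which, together with the normal form above, pins the tuples $(p_i,q_i,m_i)$ to the regime in which one can select a finite set of wall directions whose walls pairwise cross a uniformly bounded number of times; then the dual complex is finite-dimensional and Woodhouse \cite[Theorem 1.1]{Wood} gives that $G$ is virtually special. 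The main obstacle is exactly this last step — extracting Woodhouse's finite-dimensionality condition from the $\mathrm{CAT}(0)$ hypothesis while keeping the associated action free — and in particular choosing the right equitable set, since (as the Lyman examples with $|m|\neq|n|$ illustrate) a naive choice such as $\{\ell_0,\ell_+,\ell_-\}$ need not be finite-dimensional even when $G$ is $\mathrm{CAT}(0)$.
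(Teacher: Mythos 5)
Your framework is the same as the paper's (reduce via Woodhouse \cite[Theorem 1.1]{Wood} to finding an equitable set in the sense of Wise \cite{wi14} whose immersed walls are non-dilated, so that the dual cube complex is finite-dimensional by \cite[Theorem 1.2]{Wood2}), and your normalization $v_i=(p_i,m_i)$, $w_i=(q_i,m_i)$ with $m_i\neq 0$ is a correct repackaging of the free-by-cyclic condition (Theorem \ref{th2}). But there is a genuine gap exactly where you flag "the main obstacle": you never actually extract anything from the $\mathrm{CAT(0)}$ hypothesis, nor exhibit an equitable set for which non-dilation can be verified. The sentence asserting that the non-positively-curved-metric analysis "pins the tuples $(p_i,q_i,m_i)$ to the regime in which one can select a finite set of wall directions whose walls pairwise cross a uniformly bounded number of times" is precisely the theorem to be proved, and no argument is offered for it. Moreover, "pairwise bounded crossing of a finite set of wall directions" is not the criterion you need; what must be checked is that the dilation function $R_*$ on $\pi_1(\Omega)$ has finite image, i.e.\ that for each edge $s_i$ the two intersection numbers $\#[z,s_i^-]$ and $\#[z,s_i^+]$ agree for each circle $z$ in the equitable set, and this is a concrete determinant identity that your proposal never establishes.

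For comparison, the missing content is supplied in the paper as follows: after a change of basis putting $v_1=(k_1,0)^{T}$, $w_1=(0,k_2)^{T}$ (assuming $v_1,w_1$ independent), the $\mathrm{CAT(0)}$ parametrization of Theorem \ref{th1} combined with the parallelism relation $-k_2(a_i-c_i)=k_1(b_i-d_i)$ coming from free-by-cyclicity forces $a_ib_i=c_id_i$ for every $i$ (Lemma \ref{lemcat}; the key point is that $\cos\phi\neq\pm1$). A further elementary but nontrivial case analysis then shows that the \emph{two-element} equitable set $S=\{w_1-v_1,\,w_1+v_1\}$ satisfies $\det[w_1\pm v_1,v_i]=\pm\det[w_1\pm v_1,w_i]$ for all $i$, which is exactly the non-dilation condition of Theorem \ref{th3}; the degenerate case where every pair $v_i,w_i$ is parallel is handled separately. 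Until you either reproduce an argument of this kind or give some other derivation of a non-dilated equitable set from the $\mathrm{CAT(0)}$ hypothesis, the proposal is a plan rather than a proof. (Your three-element set $\{\ell_0,\ell_+,\ell_-\}$ is a correct equitable set and does give a free, possibly infinite-dimensional cubulation, but as you yourself note it does not engage with the dimension question, which is where the entire difficulty lies.)
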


The proofs of Theorem \ref{th7} and Theorem \ref{th4} are based on a study
of a tubular group with a single vertex (see Theorem \ref{main}). We also prove that a tubular free-by-cyclic group is actually $F_{n}$-by-$\mathbb{Z}$ (see Theorem \ref{th10}).
\bigskip

The second part of our study considers the amalgamation of two free-by-cyclic groups. According
to Hagen--Wise \cite[Remark 3.6]{hw}, it is not clear whether the class of
free-by-cyclic groups is closed under amalgams along $\mathbb{Z}$ subgroups.
As Example \ref{eg2} will show that the answer to their question is negative, we therefore revise their question to the following.

\begin{question}
\label{prob3} Is the amalgamated product  of two free-by-cyclic groups over a
cyclic subgroup virtually free-by-cyclic?
\end{question}

It turns out that the answer to Question \ref{prob3} is still
negative.

\begin{theorem}[\Cref{corlast}]
  Let $G_1 =\langle a,b,c,t\mid tat^{-1}=a,tbt^{-1}=ba, tct^{-1} =ba^2\rangle $ be the Gersten group, $G_2 = \langle x,s\mid sxs^{-1} = x \rangle \cong \mathbb{Z}^2$. Then the tubular group $G = G_1 \ast_{a=x} G_2$ is not virtually free-by-cyclic. 
\end{theorem}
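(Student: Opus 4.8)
The plan is to show that $G = G_1 \ast_{a=x} G_2$ is itself a tubular group (indeed a CAT(0) free-by-cyclic tubular group to which the machinery of the earlier sections applies), and then to argue that it cannot be virtually $F_n$-by-$\mathbb{Z}$ by extracting an obstruction from its subgroup structure.

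First I would describe $G$ explicitly as a tubular group. The Gersten group $G_1$ is the tubular HNN extension over the vertex group $\mathbb{Z}^2 = \langle a, t\rangle$ with two stable letters $b, c$ (rewriting $tbt^{-1}=ba$, $tct^{-1}=ba^2$ in the form $b a t b^{-1} = t$ and $c a^2 t c^{-1} = t$, so that the edge groups are the cyclic subgroups $\langle a, t\rangle \supset \langle t \rangle$ and the two stable letters conjugate $\langle a t \rangle$, $\langle a^2 t\rangle$ to $\langle t\rangle$); meanwhile $G_2 = \mathbb{Z}^2 = \langle x, s\rangle$ is already a vertex group. Amalgamating along $a = x$ glues the $\mathbb{Z}^2$ vertex group of $G_2$ to $G_1$ along the cyclic subgroup $\langle a\rangle$, so $G$ is a finite graph of groups with $\mathbb{Z}^2$ vertex groups and $\mathbb{Z}$ edge groups — that is, $G$ is tubular. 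One then checks, using Button's criterion \cite[Theorem 2.1]{bu} (or directly, since $s$ commutes with $a=x$ and $a$ is $t$-fixed), that $G$ is free-by-cyclic: the obvious map $G \to \mathbb{Z}$ sending $t \mapsto 1$, $s \mapsto 0$, $b \mapsto 0$, $c\mapsto 0$ (and $a \mapsto 0$) has free kernel, so $G$ is free-by-cyclic.

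Next I would pin down why $G$ is not \emph{virtually} free-by-cyclic. The key structural fact is that $G$ contains two "independent" copies of $\mathbb{Z}^2$ — namely $\langle a, t\rangle$ coming from $G_1$ and $\langle a, s\rangle = \langle x, s\rangle$ coming from $G_2$ — which intersect in the common $\mathbb{Z} = \langle a\rangle$ but do not together generate a $\mathbb{Z}^3$ (since $t$ and $s$ do not commute inside $G$; in fact the subgroup $\langle a, t, s\rangle$ is a non-abelian group of the form $F_2 \times \mathbb{Z}$ or similar). For a free-by-cyclic group $H = F \rtimes \mathbb{Z}$, any copy of $\mathbb{Z}^2$ must lie (up to conjugacy, after intersecting with $F$) over a maximal cyclic subgroup of $F$ fixed by a power of the monodromy; two such $\mathbb{Z}^2$'s sharing a $\mathbb{Z}$ are forced to commute, because the centralizer of a nontrivial element of $F$ in $F$ is cyclic and the $\mathbb{Z}$-direction is central modulo $F$. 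Passing to a finite-index subgroup does not destroy this configuration (finite-index subgroups of $\mathbb{Z}^2$ are again $\mathbb{Z}^2$, and the non-commuting of $t, s$ persists in powers), so $G$ has no finite-index free-by-cyclic subgroup. This is the heart of the argument and where I expect the real work to be: making precise the claim that in a (virtually) free-by-cyclic group, two $\mathbb{Z}^2$-subgroups meeting in an infinite cyclic subgroup must commute, and checking that the Gersten amalgam genuinely violates this — one has to rule out all conjugates and all finite-index overgroups, which is essentially a centralizer/malnormality computation in free-by-cyclic groups.

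Finally, I would package the conclusion. Having shown $G$ is a free-by-cyclic (hence CAT(0), by Theorem \ref{th4} applied to this tubular presentation — or by noting it contains the non-CAT(0) Gersten group as a subgroup, which actually shows $G$ is \emph{not} CAT(0), an independent point of interest) tubular group that is not virtually free-by-cyclic, the theorem follows, and with it the negative answer to Question \ref{prob3} and to \cite[Remark 3.6]{hw}. I would remark that Example \ref{eg2}, which handles Hagen--Wise's original (non-virtual) question, is presumably a lighter version of the same construction — amalgamating along $\mathbb{Z}$ to break the free-by-cyclic property on the nose — and that the present theorem is the strengthening to the "virtually" statement, which is exactly why the centralizer obstruction above (stable under finite index) is the right tool rather than a mere normal-form or Euler-characteristic argument.
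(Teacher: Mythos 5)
Your proposal has two genuine gaps, and the second one is fatal because it omits what is actually the heart of the paper's argument.

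First, $G$ is not free-by-cyclic, contrary to your claim. The map you propose ($t\mapsto 1$, $a,b,c,s\mapsto 0$) kills the edge group $\langle a\rangle=\langle x\rangle$ of the amalgamation, so its kernel contains $\langle a,s\rangle\cong\mathbb{Z}^2$ and is not free. In fact no homomorphism $\phi\colon G\to\mathbb{Z}$ can be nonzero on all edge groups: the relation $tbt^{-1}=ba$ forces $\phi(a)=0$, so by Button's criterion \cite[Theorem 2.1]{bu} the group $G$ is not free-by-cyclic at all. (This is of course consistent with the theorem, but it means the setup of your first paragraph is wrong, as is the later assertion that Theorem \ref{th4} applies.)

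Second, and more importantly, your ``key structural fact'' --- that in a free-by-cyclic group two $\mathbb{Z}^2$ subgroups sharing an infinite cyclic subgroup must commute --- is false. The group $F_2\times\mathbb{Z}=\langle p,q\rangle\times\langle z\rangle$ is $F_2$-by-$\mathbb{Z}$ and contains $\langle p,z\rangle$ and $\langle q,z\rangle$, two copies of $\mathbb{Z}^2$ meeting in $\langle z\rangle$ that do not commute; equivalently, $\mathbb{Z}^2\ast_{\mathbb{Z}}\mathbb{Z}^2$ can be isomorphic to $F_2\times\mathbb{Z}$, so the configuration you describe is not by itself an obstruction. Your centralizer argument is valid only when the shared cyclic subgroup lies inside the free kernel $F$; when it maps injectively to the quotient $\mathbb{Z}$, nothing forces commutation, and that is precisely the case that must be excluded. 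Excluding it is where the specific geometry of the Gersten group enters: if some finite-index subgroup $H\le G$ were free-by-cyclic, Button's criterion would give $\phi\colon H\to\mathbb{Z}$ injective on the edge group $\langle a^m\rangle$, and hence (passing to the preimage of $\langle\phi(a^m)\rangle$ in $G_1\cap H$ and applying \cite[Theorem 1.4]{min}) $a$ would be a virtual retractor of $G_1$; but Lemma \ref{lem6.11} shows via the flat torus theorem that this is impossible, because a circle cannot meet the line $v+\mathbb{R}w$ in the three distinct points corresponding to the translation vectors of $t$, $at$, $a^2t$. Your proposal contains no ingredient playing the role of Lemma \ref{lem6.11}, and without one the argument cannot distinguish the Gersten amalgam from $F_2\times\mathbb{Z}$.
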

\begin{remark}
Kielak and Linton in \cite[Theorem 1.1]{KL24} give a criterion for a hyperbolic and virtually compact special group to be virtually free-by-cyclic:  it is virtually free-by-cyclic if and only if $b_2^{(2)}(G)=0$ and $\mathrm{cd}_{\mathbb{Q}}\leq 2$. Fisher improves their criterion by relaxing the condition ``hyperbolic and virtually compact special" to just ``$\mathrm{RFRS}$" \cite[Theorem A]{Fi24}. On the other hand, it is not too hard to see that tubular groups are $2$-dimensional, locally indicable, coherent \cite[Theorem 5.1]{Wi20}, and have vanishing $\ell^2$-Betti numbers. Also, our group $G$ does not have any Baumslag--Solitar subgroup of the form $\langle a,t\mid ta^mt^{-1} =a^n\rangle$ where $|m|\neq |n|$. This shows that the $\mathrm{RFRS}$ condition in Fisher's theorem cannot be easily further improved. 
\end{remark}

The last part of our study considers the property $\mathrm{(VRC)}$.  Recall that Minasyan \cite{min} following Long--Reid \cite{lr} defines a group $G$ to
be $\mathrm{(VRC)}$ (virtual retract onto any cyclic subgroup) if for every cyclic subgroup $C$,
there is a finite index subgroup $H\leq G$ such that $H\geq C$ and $H$ has
an epimorphism $H\rightarrow C$ satisfying $C\rightarrow
H\rightarrow C$ is the identity. A subgroup $K$ of a group $G$ is called separable if $K$ is the intersection of a family of finite index subgroups in $G$. The following question was asked by Minasyan.

\begin{question}\cite[Question 11.6]{min}
\label{Ques-Min}
 Let $G$ be the fundamental group of a finite graph of groups with free abelian
vertex groups (of finite ranks) and cyclic edge groups. If $G$ is cyclic subgroup separable, does
it necessarily satisfy $\mathrm{(VRC)}$?   
\end{question}

The answer to Question \ref{Ques-Min} is again negative. 

\begin{theorem}[Example \ref{eg7.2}]
    The Gersten group $$\langle a,b,c,t \mid tat^{-1}=a,tbt^{-1}=ba, tct^{-1}=ca^2 \rangle $$ is a cyclic-subgroup-separable tubular group that does not have the property $\mathrm{(VRC)}$.
\end{theorem}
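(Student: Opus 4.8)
I would prove the two assertions separately.

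For \emph{cyclic subgroup separability}, the starting observation is that the Gersten automorphism $\phi\colon a\mapsto a,\ b\mapsto ba,\ c\mapsto ca^{2}$ acts unipotently on $H_{1}(F_{3};\mathbb Z)$, hence on every lower central quotient $\gamma_{k}(F_{3})/\gamma_{k+1}(F_{3})$. Since $\gamma_{k}(F_{3})$ is characteristic in $F_{3}$ and $F_{3}\trianglelefteq G$, each $G/\gamma_{k}(F_{3})\cong\bigl(F_{3}/\gamma_{k}(F_{3})\bigr)\rtimes_{\bar\phi}\mathbb Z$ is finitely generated torsion-free nilpotent, and $\bigcap_{k}\gamma_{k}(F_{3})=1$, so $G$ is residually (torsion-free nilpotent). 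Cyclic subgroups of finitely generated nilpotent groups are separable, so each $G/\gamma_{k}(F_{3})$ separates cyclic subgroups; and if $g\ne1$ then the image of $g$ in $G/\gamma_{k}(F_{3})$ has infinite order once $k$ is large, so the integer $m_{k}$ for which the image of $h$ equals the $m_{k}$-th power of the image of $g$ modulo $\gamma_{k}(F_{3})$ (if such exists) is eventually independent of $k$, forcing $h=g^{m}\in\langle g\rangle$. Hence if $h\notin\langle g\rangle$ some $G/\gamma_{k}(F_{3})$ detects this, and pulling back a separating finite-index subgroup shows $\langle g\rangle$ is separable in $G$. (Alternatively one may invoke an existing separability result for tubular, or free-by-cyclic, groups.)

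The substantive part is the \emph{failure of $\mathrm{(VRC)}$}, which I would get by showing that $\langle a\rangle$ is not a virtual retract. Were it one, there would be a finite-index $H\le G$ with $a\in H$ and a retraction $\rho\colon H\twoheadrightarrow\langle a\rangle\cong\mathbb Z$, in particular a homomorphism $H\to\mathbb Z$ sending $a$ to a generator, so $[a]\ne0$ in $H_{1}(H;\mathbb Q)$. Thus it suffices to prove that $[a]=0$ in $H_{1}(H;\mathbb Q)$ for \emph{every} finite-index $H\le G$ with $a\in H$ (the case $H=G$ being clear, as $a$ is homologically trivial in $G$). Writing $F=H\cap F_{3}$ and choosing $d\ge1$ with $t^{d}\in H$, so that $H=F\rtimes_{\phi^{d}}\langle t^{d}\rangle$, the Wang exact sequence gives $H_{1}(H;\mathbb Q)\cong\operatorname{coker}\bigl(\phi^{d}_{*}-1\colon H_{1}(F;\mathbb Q)\to H_{1}(F;\mathbb Q)\bigr)\oplus\mathbb Q$ with $[a]$ in the cokernel summand, so the task is to show $[a]\in\operatorname{im}(\phi^{d}_{*}-1)$. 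I would carry this out by a Fox-calculus computation on the finite cover of the presentation $2$-complex of $G$ (equivalently, at the chain level on the $F$-cover of the wedge of three circles). After replacing $d$ by a suitable multiple $D$ — which only shrinks $\operatorname{im}(\phi^{D}_{*}-1)$, hence strengthens the statement, and which can be arranged so that $\phi^{D}$ fixes every coset of $F$ in $F_{3}$ — the operator $\phi^{D}_{*}-1$ sends the $b$- and $c$-loops into the span of the $a$-loops, and the problem collapses to a single linear equation over the group algebra $\mathbb Q[F_{3}/F]$, of the shape $\gamma\cdot\bigl(C(2B^{-1}-1)-1\bigr)=B^{-1}-1$ with $B,C$ the images of $b,c$. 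The crucial input is that $\theta:=C(2B^{-1}-1)-1$ is invertible in every Wedderburn component of $\mathbb Q[F_{3}/F]$ in which $B^{-1}-1$ is nonzero: for a nontrivial irreducible representation $\varrho$, a nonzero vector in $\ker\bigl(\varrho(C)(2\varrho(B)^{-1}-1)-1\bigr)$ would, on taking norms and using the elementary inequality $|2\lambda^{-1}-1|>1=|1|$ valid for every root of unity $\lambda\ne1$, be fixed by $\varrho(B)$ and $\varrho(C)$, hence by all of $F_{3}/F$, contradicting irreducibility; on the trivial component both sides of the equation vanish. (This inequality fails precisely when the two transvection multipliers $a^{1}$ and $a^{2}$ coincide — exactly the virtually special case, where $\mathrm{(VRC)}$ does hold — so the argument genuinely uses $1\ne2$.) Hence the equation is solvable, $[a]$ is always a boundary, $\langle a\rangle$ is not a virtual retract, and $G$ fails $\mathrm{(VRC)}$.

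The step I expect to be the main obstacle is this last computation carried out \emph{uniformly} over all finite-index subgroups: when $F$ is not normal in $F_{3}$, or when $a$ and $\phi$ move the cosets of $F$, the chain-level operator is genuinely messier, so the reduction to a suitable power $\phi^{D}$ — and, where needed, to a smaller finite-index subgroup, using that vanishing of $[a]$ in the homology of a subgroup forces its vanishing in any overgroup — has to be set up carefully before the problem reduces to the clean group-ring identity above.
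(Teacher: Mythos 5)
Your separability argument is a legitimate alternative to the paper's: the paper instead proves (Theorem \ref{lastthm}, via \cite[Lemma 3.9]{hw99}) that any $K\rtimes_\phi\mathbb{Z}$ with $K$ finitely generated and cyclic subgroup separable is again cyclic subgroup separable, and applies it to $F_3\rtimes_\phi\mathbb{Z}$ (Corollary \ref{cor:css}); your route through residual torsion-free nilpotence (unipotence of $\phi$ on the lower central quotients, P.~Hall's theorem, Mal'cev separability in the nilpotent quotients) works for this particular $\phi$ but would not cover general $F_n$-by-$\mathbb{Z}$ groups. Your reduction of the (VRC) failure to the statement ``$[a]=0$ in $H_1(H;\mathbb{Q})$ for every finite-index $H\ni a$'' is also the correct target, and is equivalent to what the paper proves in Lemma \ref{lem6.11}.

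The gap is in the proof of that target. First, the passage from ``$[a]\in\operatorname{im}(\phi^D_*-1)$ on $H_1(F;\mathbb{Q})$'' to your single group-ring equation $\gamma\cdot\bigl(C(2B^{-1}-1)-1\bigr)=B^{-1}-1$ is asserted, not derived: the Fox Jacobian of $\phi^D$ has first-row entries $B(1+A+\cdots+A^{D-1})$ and $C(1+A+\cdots+A^{2D-1})$, which involve $A$ and the norm element of $\langle A\rangle$, so an equation purely in $B,C$ cannot be the raw output; moreover you need the preimage to lie in $Z_1=\ker\partial\cong H_1(F;\mathbb{Q})$, not merely in the free module $\mathbb{Q}[F_3/F]^3$, and $(J-1)(Z_1)$ can be strictly smaller than $(J-1)(C_1)\cap Z_1$, a constraint your reduction never addresses. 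Second, your Wedderburn-component argument as stated is wrong: a nonzero vector fixed by $\varrho(B)$ and $\varrho(C)$ does not contradict irreducibility of $\varrho$, since $\varrho(A)$ need not fix it (e.g.\ $Q$ abelian with $B=C=1$ and $A\neq 1$). What your norm inequality actually yields is $\ker\varrho(\theta)\subseteq\ker\varrho(B^{-1}-1)$, which is the right solvability criterion for $X\varrho(\theta)=\varrho(B^{-1}-1)$ --- but only if the equation itself is correct, which is the unverified step. You flag the uniform-over-all-finite-index-subgroups computation as the main obstacle yourself; it is not a technicality but the entire content. The paper avoids all of this with a short geometric argument: if $\langle a\rangle$ were a virtual retract, $G$ would act isometrically on a Euclidean space $\Pi_{G/\Gamma}\mathbb{R}$ with $a$ a nontrivial translation (Lemma \ref{vrc-euclid}); the flat torus theorem then produces translation vectors $v,w$ of $t,a$ with $w\neq 0$ and $\Vert v\Vert=\Vert v+w\Vert=\Vert v+2w\Vert$ from the relations $tbt^{-1}=ba$, $tct^{-1}=ca^2$, which is impossible since a circle meets a line in at most two points. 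I would recommend replacing your homological computation by that argument, or at minimum carrying out the Fox-calculus reduction in full for a general $\phi^D$-invariant finite-index $F$ before claiming the result.
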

    
In the last section, we discuss the subtle difference between RFRS and virtually RFRS.  We remark that if one drops the virtualness assumption in Question \ref{prob1.1},
and a question of Agol in his ICM survey paper \cite[Question 11]{Ag14} (whether the braid groups are RFRS), there are counterexamples.

\subsection*{Structure of the Paper.} In Section 2, we study the CAT(0)ness of  tubular groups. In Section 3, we give a sufficient and necessary condition for a tubular group to be free-by-cyclic or $F_n$-by-$\mathbb{Z}$ using Button's criterion. We then study the virtual specialness of tubular groups in Section 4. We prove that a CAT(0) free-by-cyclic tubular group over a single vertex is virtually special. In Section 5, we discuss when amalgamated products of two free-by-cyclic groups along cyclic subgroups are again (virtually) free-by-cyclic.  In Section 6, we study the (VRC) property and subgroup separation of free-by-cyclic groups. In the final section, discuss the subtle difference between  RFRS and virtually RFRS, then propose some questions.

\subsection*{Acknowledgments.}
Wu is currently a member of LMNS and is supported by NSFC No.12326601. Ye is
supported by NSFC No. 11971389. We thank Ian Agol, Sami Douba, Mark Hagen, Monika Kudlinska and  Daniel Wise for their helpful communications. In particular, we learned the idea of proof for Lemma \ref{lem6.11} from Sami Douba. After we shared our preprint with Ashot Minasyan, he informed us that in his on-going joint with Jon Merladet, they also found many counterexamples to  \Cref{Ques-Min}. We also thank him for many comments on our preprint.

\section{$\mathrm{CAT(0)}$ness}

We study when a tubular group with only one vertex is $\mathrm{CAT(0)}$ in this section.

Recall that a group $G$ is $\mathrm{CAT(0)}$ if it can act properly and cocompactly on a $\mathrm{CAT(0)}$ space by isometries. The proofs of Theorem \ref{th7} and Theorem \ref{th4} are based on a study
of tubular groups with a single vertex group. For this class of groups, we provide some concrete conditions so that they are $\mathrm{CAT(0)}$, free-by-cyclic, or virtually special.

\begin{theorem}
\label{main}Let $G=\langle \mathbb{Z}%
^{2},s_{i}|s_{i}v_{i}s_{i}^{-1}=w_{i},i=1,2,...,k\rangle .$

\begin{enumerate}[label=(\arabic*)]
\item Suppose that $\{v_{1},w_{1}\}$ are linearly independent columns of the invertible matrix $[v_1,w_1]$, and $%
[v_{1},w_{1}]^{-1}v_{i}=%
\begin{pmatrix}
x_{i} \\ 
y_{i}%
\end{pmatrix}%
,[v_{1},w_{1}]^{-1}w_{i}=%
\begin{pmatrix}
x_{i}^{\prime } \\ 
y_{i}^{\prime }%
\end{pmatrix}%
,i=2,...,k$. The group $G$ is $\mathrm{CAT(0)}$ if and only if 
\begin{equation*}
|x_{i}|^{2}+|y_{i}|^{2}-2|x_{i}y_{i}|\cos \phi =|x_{i}^{\prime
}|^{2}+|y_{i}^{\prime }|^{2}-2|x_{i}^{\prime }y_{i}^{\prime }|\cos \phi
,i=2,...,k
\end{equation*}%
for some common $\phi \in (0,\pi ).$

\item The group $G$ is free-by-cyclic if and only if $v_{1}-w_{1}$, $%
v_{2}-w_{2},...,v_{k}-w_{k}$ lie in a common line $l$ which doesn't contain
any element of $\{v_{1},v_{2},...,v_{n}\}.$

\item The group $G$ is virtually special if $\{v_{1},w_{1}\}$ are
linearly independent and 
\begin{eqnarray*}
\det [w_{1}-v_{1},v_{i}] &=&\pm \det [w_{1}-v_{1},w_{i}], \\
\det [w_{1}+v_{1},v_{i}] &=&\pm \det [w_{1}+v_{1},w_{i}],i=2,...,k.
\end{eqnarray*}
\end{enumerate}
\end{theorem}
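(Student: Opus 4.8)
The plan is to treat the three parts separately, exploiting the description of $G$ as a tubular group with a single $\mathbb{Z}^2$ vertex group and HNN stable letters $s_i$ conjugating $v_i$ to $w_i$.

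For part (1), I would invoke Wise's criterion and the construction of a CAT(0) metric on the associated presentation/graph-of-spaces complex. Concretely, a tubular group over a single vertex acts on a space built from one flat torus (the $\mathbb{Z}^2$ vertex) with annuli glued along the curves representing $v_i$ and $w_i$; the group is CAT(0) precisely when one can choose a flat metric on the torus (equivalently, an inner product, i.e. a common angle $\phi$ between the two basis directions coming from $v_1,w_1$) making every gluing annulus a genuine Euclidean annulus, i.e. so that the geodesic representatives of $v_i$ and $w_i$ have equal length. After the change of basis by $[v_1,w_1]^{-1}$ — which sends $v_1,w_1$ to the standard basis vectors — the squared length of the image of $v_i$ in a flat metric with angle $\phi$ between the basis vectors is exactly $|x_i|^2+|y_i|^2+2x_iy_i\cos\phi$; taking absolute values inside to allow the metric to flip orientation on the annulus gives $|x_i|^2+|y_i|^2-2|x_iy_i|\cos\phi$ after absorbing signs, and similarly for $w_i$. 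So the displayed equality is precisely the condition that some flat metric makes all annuli Euclidean, hence (by the CAT(0) gluing/link conditions, which are automatic once the annuli are Euclidean and attached along essential curves) equivalent to $G$ being CAT(0). I expect this to be the part where the most care is needed: one must cite the precise form of Wise's theorem, check the link condition at the single vertex, and be careful about why one may take absolute values (the annulus can be attached with either orientation, and $v_i$ versus $-v_i$ give the same cyclic subgroup).

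For part (2), I would apply Button's criterion \cite[Theorem 2.1]{bu} directly: a tubular group $G=\langle \mathbb{Z}^2, s_i\mid s_iv_is_i^{-1}=w_i\rangle$ is free-by-cyclic iff there is a homomorphism $\mathbb{Z}^2\to\mathbb{Z}$ (equivalently a line $l$ through the origin, namely its kernel) that is nonzero on every $v_i$ (so the kernel intersects each edge group trivially and the ascending/descending structure is free) and agrees on $v_i$ and $w_i$ for each $i$. The map agreeing on $v_i$ and $w_i$ is exactly the condition that $v_i-w_i$ lies in the kernel line $l$, and being nonzero on each $v_i$ is the condition that no $v_i$ lies on $l$. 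So part (2) is essentially a restatement of Button's criterion and should be short.

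For part (3), the strategy is to produce a finite-index subgroup that is special via Woodhouse's theorem \cite[Theorem 1.1]{Wood}: it suffices to show $G$ acts freely on a finite-dimensional CAT(0) cube complex, and by Woodhouse \cite[Theorem 1.2]{Wood2} / Wise \cite[Theorem 1.1]{wi14} this follows from finding enough equitable/wall structures on the $\mathbb{Z}^2$ vertex group. The determinant hypotheses say that, writing $u_\pm = w_1\pm v_1$, the linear functional $\det[u_\pm,\,\cdot\,]$ takes the same absolute value on $v_i$ and $w_i$ for every $i$; these are precisely two "rational directions" in the $\mathbb{Z}^2$ whose associated sets of hyperplanes (lines perpendicular to $u_\pm$, or rather the parallelism classes the $s_i$ must respect) are permuted by each edge relation up to sign. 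Two transverse such directions give a pair of wall systems on $\mathbb{Z}^2$ whose intersection pattern is that of the standard cubulation of the plane, and the compatibility under all $s_i$ lets these walls extend across the annuli to give a wall structure on the whole graph of spaces; the resulting cube complex is finite-dimensional and the action is free after passing to a finite-index (torsion-free, residually finite) subgroup. The main obstacle in part (3) will be checking that the $\pm$ signs can be handled uniformly — one genuinely needs to pass to a finite-index subgroup to kill the sign ambiguities (the walls are only preserved up to reflection by the full group), and one must verify that the two directions $u_+$ and $u_-$ are actually independent, which uses linear independence of $v_1,w_1$. I would organize this as: (i) reduce to Woodhouse's criterion, (ii) define the two wall classes from $u_\pm$, (iii) verify the $s_i$-compatibility from the determinant equations, (iv) pass to finite index to get a genuine (non-reflective) action and apply Woodhouse.
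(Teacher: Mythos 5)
Your proposal follows the paper's proof in all three parts: part (1) is established exactly as you describe, via the flat torus theorem for necessity and a gluing theorem for sufficiency (reducing CAT(0)ness to the existence of a flat metric on the vertex torus equalizing the translation lengths of $v_i$ and $w_i$, then changing basis by $[v_1,w_1]^{-1}$ and applying the law of cosines — including the same sign/absolute-value step you rightly flag as the delicate point); part (2) is the same direct application of Button's criterion; and part (3) uses precisely the equitable set $\{w_1-v_1,\,w_1+v_1\}$, whose linear independence and equal intersection numbers with the two ends of each annulus yield non-dilated immersed walls, hence a finite-dimensional dual cube complex to which Woodhouse's theorem applies. The only cosmetic differences are that in (1) the paper invokes the metric gluing statement \cite[Theorem II.11.18]{BH} rather than Wise's cubulation criterion (which concerns cube complexes, not CAT(0) metrics), and in (3) no separate passage to a finite-index subgroup is needed, since virtual specialness is already the conclusion of Woodhouse's theorem.
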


\begin{remark}
    A tubular free-by-cyclic group is actually $F_{n}$-by-$\mathbb{Z}$ (see Theorem \ref{th10}).
\end{remark} 
The proof of Theorem \ref{main} will be spread in the following sections.

Let $v_{1},v_{2},...,v_{k},w_{1},w_{2},...,w_{k}\in \mathbb{Z}^{n}$ be $2k$
nonzero integer vectors and $G=\langle \mathbb{Z}%
^{n},s_{i}|s_{i}v_{i}s_{i}^{-1}=w_{i},i=1,2,...,k\rangle $ be a multiple
HNN extension.  The following is a starting point.

\begin{lemma}
\label{lem1}The group $G$ is $\mathrm{CAT(0)}$ if and only if there is an invertible
matrix $A_{n\times n}\in \mathrm{GL}_{n}(\mathbb{R})$ such that 
\begin{equation*}
\parallel Av_{i}\parallel =\parallel Aw_{i}\parallel ,i=1,2,...,k.
\end{equation*}
\end{lemma}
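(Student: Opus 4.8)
The plan is to realize $G = \langle \mathbb{Z}^n, s_i \mid s_i v_i s_i^{-1} = w_i \rangle$ as a graph-of-groups with a single vertex group $\mathbb{Z}^n$ and $k$ loop edges, and to exploit the general principle that such a group acts properly cocompactly on a CAT(0) space precisely when the vertex $\mathbb{Z}^n$ can be given a flat Euclidean metric in which each edge-monodromy identification $v_i \leftrightarrow w_i$ becomes an isometry of the corresponding circle (edge group). Concretely, I would build the putative CAT(0) space as a tree of flats: take the Bass--Serre tree $T$ of this HNN presentation, and over each vertex place a copy of a flat torus $\mathbb{R}^n/\Lambda$ (lifted to $\mathbb{R}^n$), glued along tubes $S^1 \times [0,1]$ corresponding to the edges; the gluing is an isometry exactly when the circle $\langle v_i\rangle$ and the circle $\langle w_i \rangle$ have equal length in the chosen flat metric.

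First I would set up the forward direction. Suppose $G$ acts properly cocompactly by isometries on a CAT(0) space $X$. The subgroup $\mathbb{Z}^n \leq G$ acts properly cocompactly, hence by the Flat Torus Theorem (Bridson--Haefliger) stabilizes an isometrically embedded flat $\mathbb{E}^n \subseteq X$ on which it acts as a lattice of translations. This translation action is, up to choosing coordinates, $\mathbb{Z}^n$ acting on $\mathbb{R}^n$ by $\mathbb{Z}^n \xrightarrow{B} \mathbb{R}^n$ for some $B \in \mathrm{GL}_n(\mathbb{R})$ (the matrix encoding the flat metric). The translation length of the element $v_i \in \mathbb{Z}^n$ in $X$ equals $\|Bv_i\|$, and likewise for $w_i$. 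Since $s_i$ conjugates $v_i$ to $w_i$ and $s_i$ acts by an isometry of $X$, these two elements are conjugate in $G$, so they have equal translation length: $\|Bv_i\| = \|Bw_i\|$ for all $i$. Setting $A = B$ gives one direction.

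For the converse, given $A \in \mathrm{GL}_n(\mathbb{R})$ with $\|Av_i\| = \|Aw_i\|$ for all $i$, I would use $A$ to put a flat metric on $\mathbb{R}^n$ (pull back the Euclidean metric along $A$, equivalently use the metric $\langle x,y\rangle_A = (Ax)^\top(Ay)$) and then construct $X$ as the tree-of-flats described above: take the Bass--Serre tree of the HNN splitting, assign the flat $(\mathbb{R}^n, \langle\cdot,\cdot\rangle_A)$ to each vertex, and for each edge glue in a Euclidean tube of the form $C_i \times [0,1]$ where $C_i$ is the common-length circle coming from the axes of $v_i$ and $w_i$. Because the two boundary circles of each tube are genuinely isometric (this is exactly the hypothesis $\|Av_i\| = \|Aw_i\|$), the gluings are isometries along convex subsets, so by the gluing theorem (Bridson--Haefliger II.11.1) the resulting $X$ is CAT(0); $G$ acts on it properly (point stabilizers are trivial: the vertex stabilizers $\mathbb{Z}^n$ act freely on their flats) and cocompactly (one orbit of vertex-flats and one orbit of each tube). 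Hence $G$ is CAT(0).

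The main obstacle is the converse construction: one must be careful that the flat metric $\langle\cdot,\cdot\rangle_A$ simultaneously makes \emph{all} $2k$ relevant circles fit together coherently into a non-positively curved complex — in particular that the tubes can be attached without creating positive curvature at the gluing loci and that the geodesic circles of $v_i$ and $w_i$ in the flat are totally geodesic subcircles of matching length (this is where $\|Av_i\| = \|Aw_i\|$ is used, and nothing more is needed since each tube is glued independently along a single circle in each flat). A clean way to finesse this is to observe that $G$ embeds in the fundamental group of an explicit non-positively curved complex built from one flat torus $\mathbb{R}^n/\mathbb{Z}^n$ (with the $A$-metric) and $k$ annuli realizing the $s_i$, check the link condition at the gluing, and pass to the universal cover; I expect the link-condition verification and the properness/cocompactness bookkeeping to be the only genuinely technical points, with everything else following from standard CAT(0) machinery.
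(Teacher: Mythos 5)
Your proposal is correct and follows essentially the same route as the paper: the forward direction is the Flat Torus Theorem plus the fact that conjugate elements have equal translation length, and the converse uses $A$ to metrize the vertex flat and then glues in tubes along circles of matching length. The paper simply packages your explicit tree-of-flats construction by citing the Equivariant Gluing Theorem \cite[Theorem II.11.18]{BH}, which is exactly the statement your link-condition/gluing argument would reprove.
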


\begin{proof}
If $G$ acts properly and cocompactly on a $\mathrm{CAT(0)}$ space $X,$ the flat torus
theorem implies that $\mathbb{Z}^{n}\leq G$ stabilize an Euclidean subspace $%
\mathbb{E}^n\subset X$ and $\mathbb{E}^n/\mathbb{Z}^{n}$ is a flat torus. Suppose that $%
\{e_{1},e_{2},...,e_{n}\}$ is the standard basis of $\mathbb{Z}^{n}\ $and
each $e_{i}$ acts on $\mathbb{E}^n$ by a translation of the vector $A_{i}\in 
\mathbb{R}^{n}.$ Then $v_{i}\in \mathbb{Z}^{n}$ acts on $\mathbb{E}^n$ as the
translation corresponding to the vector $Av_{i}$ where $A=[A_{1},...,A_{n}]$. Since $s_{i}v_{i}s_{i}^{-1}=w_{i},$ we know that $%
\parallel Av_{i}\parallel =\parallel Aw_{i}\parallel $ for each $%
i=1,2,...,k. $

Conversely, suppose that there is an invertible matrix $A_{n\times
n} = [A_{1},...,A_{n}]\in \mathrm{GL}_{n}(\mathbb{R})$ such that 
\begin{equation*}
\parallel Av_{i}\parallel =\parallel Aw_{i}\parallel ,i=1,2,...,k.
\end{equation*}%
Define the action of $\mathbb{Z}^{n}$ on the Euclidean space $\mathbb{E}^n$ by $%
e_{i}\rightarrow A_{i},i=1,2,...,k.$ Since $\parallel Av_{i}\parallel
=\parallel Aw_{i}\parallel ,$ the translation lengths of $v_{i}$ and $w_{i}$
are equal. Therefore, the multiple HNN extension is a $\mathrm{CAT(0)}$ group by \cite[Theorem II.11.18]{BH}.
\end{proof}

\begin{remark}
\label{remark}The necessary condition of the  theorem holds for any
$\mathrm{CAT(0)}$ group containing $G.$ In particular, a tubular group is $\mathrm{CAT(0)}$ only
if for each vertex $v$ there is an invertible matrix $A_{2\times 2}\in 
\mathrm{GL}_{2}(\mathbb{R})$ such that 
\begin{equation*}
\parallel Av_{i}\parallel =\parallel Aw_{i}\parallel ,i=1,2,...,k.
\end{equation*}%
Here $v_i$ and $w_i$ come from the edge relation $
s_{i}v_{i}s_{i}^{-1}=w_{i}$ for each edge $s_{i}$ connecting $v$ to $v$.
\end{remark}

When $k=1,$ we have the following.

\begin{corollary}
The group $G=\langle \mathbb{Z}^{n},s|svs^{-1}=w\rangle $ is $\mathrm{CAT(0)}$ if and
only if $\{v,w\}$ is linearly independent or $v=\pm w.$
\end{corollary}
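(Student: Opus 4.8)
The plan is to invoke Lemma \ref{lem1} in the special case $k=1$: the group $G=\langle\mathbb{Z}^n,s\mid svs^{-1}=w\rangle$ is $\mathrm{CAT(0)}$ if and only if there exists an invertible matrix $A\in\mathrm{GL}_n(\mathbb{R})$ with $\|Av\|=\|Aw\|$. Thus the corollary reduces entirely to the following linear-algebra assertion, which I would prove by hand: for nonzero $v,w\in\mathbb{Z}^n$, such an $A$ exists if and only if $\{v,w\}$ is linearly independent or $v=\pm w$.

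For the ``if'' direction I would split into two cases. If $v=\pm w$, then $A=I$ works since $\|v\|=\|w\|$. If $\{v,w\}$ is linearly independent, I would extend $\{v,w\}$ to a basis $\{v,w,u_3,\dots,u_n\}$ of $\mathbb{R}^n$ and let $A$ be the unique linear automorphism sending $v\mapsto e_1$, $w\mapsto e_2$, and $u_j\mapsto e_j$ for $j\ge 3$; then $A\in\mathrm{GL}_n(\mathbb{R})$ and $\|Av\|=1=\|Aw\|$, so Lemma \ref{lem1} applies.

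For the ``only if'' direction, suppose $\{v,w\}$ is linearly dependent; since both vectors are nonzero, $w=\lambda v$ for some $\lambda\in\mathbb{R}\setminus\{0\}$. If $A\in\mathrm{GL}_n(\mathbb{R})$ satisfies $\|Av\|=\|Aw\|$, then $\|Av\|=|\lambda|\,\|Av\|$; since $A$ is invertible and $v\neq 0$ we have $Av\neq 0$, hence $|\lambda|=1$, i.e.\ $v=\pm w$. Combining the two directions with Lemma \ref{lem1} gives the corollary.

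There is no genuine obstacle here: the argument is a direct unwinding of Lemma \ref{lem1} together with elementary linear algebra. The only step that needs even a moment's care is exhibiting the invertible $A$ in the linearly independent case, which the basis-extension construction above handles cleanly.
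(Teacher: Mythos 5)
Your proof is correct and follows essentially the same route as the paper: both directions reduce to Lemma \ref{lem1}, with the linearly independent case handled by a change of basis sending $v,w$ to orthonormal vectors (the paper uses $A^{-1}$ for $A$ having $v,w$ as its first two columns, which is the same construction) and the dependent case handled by the scaling argument forcing $v=\pm w$.
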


\begin{proof}
If $\{v,w\}$ is linearly independent, there is an invertible real matrix $%
A_{n\times n}$ containing $v,w$ as its first two columns. Then $%
A^{-1}v,A^{-1}w$ have the same length. If $v=\pm w,$ they already have the
same length. Lemma \ref{lem1} implies that $G$ is $\mathrm{CAT(0)}$.

If $G$ is $\mathrm{CAT(0)}$ and $\{v,w\}$ is linearly dependent, we have $av=bw$ for
some integers $a,b.$ Since $v,w$ are conjugate, they have the same
translation length. Therefore, we have $|av|=|bw|$ implying $v=\pm w.$
\end{proof}

Theorem \ref{main} (1) is the following result.

\begin{theorem}
\label{th1}Let $G=\langle \mathbb{Z}%
^{2},s_{i}|s_{i}v_{i}s_{i}^{-1}=w_{i},i=1,2,...,k\rangle .$ Suppose that $%
\{v_{1},w_{1}\}$ are linearly independent, and 
\begin{equation*}
\lbrack v_{1},w_{1}]^{-1}v_{i}=%
\begin{pmatrix}
x_{i} \\ 
y_{i}%
\end{pmatrix}%
,[v_{1},w_{1}]^{-1}w_{i}=%
\begin{pmatrix}
x_{i}^{\prime } \\ 
y_{i}^{\prime }%
\end{pmatrix}%
,i=2,...,k.
\end{equation*}%
The group $G$ is $\mathrm{CAT(0)}$ if and only if 
\begin{equation*}
|x_{i}|^{2}+|y_{i}|^{2}-2|x_{i}y_{i}|\cos \phi =|x_{i}^{\prime
}|^{2}+|y_{i}^{\prime }|^{2}-2|x_{i}^{\prime }y_{i}^{\prime }|\cos \phi
,i=2,...,k
\end{equation*}%
for some common $\phi \in (0,\pi ).$
\end{theorem}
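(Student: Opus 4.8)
The plan is to reduce the statement to Lemma \ref{lem1} by an explicit change of basis, and then analyze the resulting norm equations geometrically. By Lemma \ref{lem1}, the group $G$ is $\mathrm{CAT(0)}$ if and only if there exists $A \in \mathrm{GL}_2(\mathbb{R})$ with $\|Av_i\| = \|Aw_i\|$ for all $i = 1,\dots,k$. The key observation is that the condition for $i=1$, namely $\|Av_1\| = \|Aw_1\|$, says that the parallelogram spanned by $Av_1$ and $Aw_1$ is a rhombus; equivalently, $Av_1$ and $Aw_1$ are the two diagonals (or two sides) of a suitable configuration. Since $\{v_1, w_1\}$ is a basis of $\mathbb{R}^2$, writing $B = A[v_1,w_1]$ we have $Av_1 = Be_1$ and $Aw_1 = Be_2$, so the $i=1$ condition is just $\|Be_1\| = \|Be_2\|$: the two columns of $B$ have equal length. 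Such a $B$ is, up to a scalar we may discard, of the form $B = R \cdot D_\phi$ where $D_\phi$ sends $e_1, e_2$ to two unit vectors making angle $\phi$ with each other (for some $\phi \in (0,\pi)$) and $R$ is an arbitrary element of the conformal group $\mathbb{R}_{>0} \cdot O(2)$, which preserves all the remaining norm ratios. Thus $G$ is $\mathrm{CAT(0)}$ iff for some $\phi \in (0,\pi)$ the vectors $[v_1,w_1]^{-1}v_i$ and $[v_1,w_1]^{-1}w_i$ have equal length in the inner product on $\mathbb{R}^2$ for which $e_1, e_2$ are unit vectors at angle $\phi$.

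Next I would write out that inner product explicitly: for a vector $\binom{x}{y}$ expressed in the $\{e_1, e_2\}$ coordinates, its squared length under this metric is $x^2 + y^2 + 2xy\cos\phi$. So the remaining conditions $\|Av_i\| = \|Aw_i\|$ become
\begin{equation*}
x_i^2 + y_i^2 + 2x_i y_i \cos\phi = (x_i')^2 + (y_i')^2 + 2 x_i' y_i' \cos\phi, \quad i = 2,\dots,k.
\end{equation*}
This is almost the claimed equation; the discrepancy is only the sign of the cross terms and the absolute values on $|x_i y_i|$, $|x_i' y_i'|$. To bridge this, I would note that $\phi$ ranges over $(0,\pi)$, so $\cos\phi$ ranges over $(-1,1)$ and in particular takes both signs; by replacing $\phi$ with $\pi - \phi$ if necessary (which is still in $(0,\pi)$), and by replacing $e_1$ by $-e_1$ — equivalently replacing $v_1$ by $-v_1$, which is permissible since $svs^{-1} = w$ and $s(-v)s^{-1} = -w$ define the same relation in a $\mathbb{Z}^2$ vertex group only up to the sign convention — one can arrange the signs of the cross terms to match the absolute-value form. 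More carefully: the $i$-th equation $x_i^2 + y_i^2 + 2x_i y_i c = (x_i')^2 + (y_i')^2 + 2 x_i' y_i' c$ for $c = \cos\phi$, together with the freedom to flip the sign of $c$, is equivalent to the existence of $c \in (-1,1)$ with $|x_i|^2 + |y_i|^2 - 2|x_i y_i| c = |x_i'|^2 + |y_i'|^2 - 2|x_i' y_i'| c$ for all $i$ simultaneously — one checks that flipping the common sign of $c$ handles the case where the natural sign of the cross term is "wrong," and the absolute values are harmless since $|x_i|^2 = x_i^2$. This bookkeeping needs to be done uniformly over all $i$, which is where a little care is required.

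The main obstacle I anticipate is precisely this last sign-matching step done \emph{simultaneously} for all $i = 2, \dots, k$: one cannot adjust signs independently for each index, so one must verify that the single parameter $\phi$ (and a single global sign choice) suffices to pass between the "natural" quadratic form with signed cross terms and the stated form with $-2|x_i y_i|\cos\phi$. I would handle this by observing that both formulations are quantified as "there exists a common $\phi \in (0,\pi)$," and that the map $\phi \mapsto \pi - \phi$ is an involution of $(0,\pi)$ reversing the sign of $\cos\phi$; since $|x_iy_i|$ and $|x_i'y_i'|$ are nonnegative, the equation $|x_i|^2+|y_i|^2-2|x_iy_i|\cos\phi = |x_i'|^2+|y_i'|^2-2|x_i'y_i'|\cos\phi$ holding for some $\phi$ is unchanged in content if we are allowed to use either $\phi$ or $\pi-\phi$, and this matches exactly the freedom in choosing the conformal representative $B = R D_\phi$ above (the angle between two unit vectors and its supplement give congruent, hence isometric, lattices). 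Putting the forward and backward directions together via Lemma \ref{lem1} then completes the proof.
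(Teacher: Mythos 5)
Your reduction to Lemma \ref{lem1}, the change of basis $B=A[v_1,w_1]$, and the observation that everything depends only on the Gram matrix of $B$ --- i.e.\ on a single angle $\phi\in(0,\pi)$ between two equal-length columns --- is exactly the route the paper takes, and the condition you derive,
\begin{equation*}
x_i^2+y_i^2+2x_iy_i\cos\phi=(x_i')^2+(y_i')^2+2x_i'y_i'\cos\phi ,\qquad i=2,\dots,k,
\end{equation*}
is the correct characterization. The gap is the final ``sign-matching'' step, and it is a genuine one: this signed condition and the stated condition with $-2|x_iy_i|\cos\phi$ are \emph{not} equivalent, and no global replacement $\phi\mapsto\pi-\phi$ can reconcile them. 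Writing $c=\cos\phi$ and $\Delta_i=x_i^2+y_i^2-(x_i')^2-(y_i')^2$, the signed condition reads $\Delta_i=2(x_i'y_i'-x_iy_i)c$ while the stated one reads $\Delta_i=2(|x_iy_i|-|x_i'y_i'|)c$; the coefficients $x_i'y_i'-x_iy_i$ and $|x_iy_i|-|x_i'y_i'|$ do not even have the same absolute value when $x_iy_i$ and $x_i'y_i'$ have opposite signs, so flipping the sign of $c$ --- which is all that $\phi\mapsto\pi-\phi$ buys you --- cannot pass between the two families of equations, and the required per-index sign corrections are not available because $\phi$ is a single common parameter. Concretely, take $v_1=e_1$, $w_1=e_2$ (so $[v_1,w_1]=I$), $v_2=(3,1)^{T}$, $w_2=(-1,4)^{T}$: the signed condition holds with $c=1/2$, so $G$ is $\mathrm{CAT(0)}$ by Lemma \ref{lem1}, whereas the stated condition forces $2\cos\phi=7$, which is impossible.

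You should be aware that the difficulty you flagged is pointing at a defect in the statement itself rather than at a missing trick: the paper's own proof performs the identical reduction and then asserts, citing ``the law of cosines,'' that $\|x_iB_1+y_iB_2\|^2=\|B_1\|^2\bigl(|x_i|^2+|y_i|^2-2|x_iy_i|\cos\phi\bigr)$, an identity that fails whenever $x_iy_i\cos\phi\neq 0$ has the wrong sign; so the paper does not contain the bridge you were trying to build either. The version of the theorem your argument actually proves is the one with $+2x_iy_i\cos\phi$ (and in the paper's subsequent applications, Lemma \ref{lemcat} and Theorem \ref{th7-1}, both forms happen to lead to the same conclusions). As a proof of Theorem \ref{th1} as literally stated, however, your last paragraph does not close the gap, because the two formulations are genuinely inequivalent.
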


\begin{proof}
If $G\ $is $\mathrm{CAT(0)}$, Lemma \ref{lem1} implies that there is $A\in \mathrm{GL}%
_{2}(\mathbb{R})$ such that $\parallel Av_{i}\parallel =\\\parallel
Aw_{i}\parallel ,i=1,2.$ Let 
\begin{equation*}
B=A[v_{1},w_{1}]=[B_{1},B_{2}].
\end{equation*}%
Then%
\begin{eqnarray*}
Av_{1} &=&B[v_{1},w_{1}]^{-1}v_{1}=Be_{1}=B_{1}, \\
Aw_{1} &=&B[v_{1},w_{1}]^{-1}w_{1}=Be_{2}=B_{2}, \\
Av_{i} &=&B[v_{1},w_{1}]^{-1}v_{i}=B%
\begin{pmatrix}
x_i \\ 
y_i%
\end{pmatrix}%
=x_{i}B_{1}+y_{i}B_{2}, \\
Aw_{i} &=&B[v_{1},w_{1}]^{-1}w_{i}=B%
\begin{pmatrix}
x_i^{\prime } \\ 
y_i^{\prime }%
\end{pmatrix}%
=x_{i}^{\prime }B_{1}+y_{i}^{\prime }B_{2}.
\end{eqnarray*}%
Let $\phi $ be the angle between $B_{1}$ and $B_{2}.$ Since $B$ is invertible, we
know that $\phi \in (0,\pi ).$ The identities $\parallel Av_{i}\parallel
=\parallel Aw_{i}\parallel ,i=1,2,...,k$, and the law of cosine imply that%
\begin{eqnarray*}
\parallel B_{1}\parallel &=&\parallel B_{2}\parallel , \\
\parallel x_{i}B_{1}+y_{i}B_{2}\parallel &=&\parallel x_{i}^{\prime
}B_{1}+y_{i}^{\prime }B_{2}\parallel , \\
\parallel B_{1}\parallel (|x_{i}|^{2}+|y_{i}|^{2}-2|x_{i}y_{i}|\cos \phi
)&=&\parallel B_{2}\parallel (|x_{i}^{\prime }|^{2}+|y_{i}^{\prime
}|^{2}-2|x_{i}^{\prime }y_{i}^{\prime }|\cos \phi ),i=2,...,k.
\end{eqnarray*}

Conversely, choose two nonzero vectors $B_{1},B_{2}$ with the same length
and angle $\phi$. Take $A=[B_{1},B_{2}][v_{1},w_{1}]^{-1}$ to get that 
$\parallel Av_{i}\parallel =\parallel Aw_{i}\parallel ,i=1,2,...,k,$
implying $G$ is $\mathrm{CAT(0)}$ by Lemma \ref{lem1}.
\end{proof}

\begin{example}\label{exam:Gpq-CAT(0)}
    Consider the tubular group $G = G(\{p_1,q_1\},\{p_2,q_2\})$ in the Introduction. It has the following tubular presentation:
    $$\langle a,b,c,t\mid [a,t]=1, b^{-1} a^{-p_1}tb = a^{q_1}t,c^{-1} a^{-p_2}tc = a^{q_2}t \rangle.$$
   Let us first replace $t$ by $t' = a^{-p_1} t$, and get a slightly different tubular presentation for $G$:
     $$\langle a,b,c,t\mid [a,t']=1, b^{-1} t' b = a^{p_1+ q_1}t',c^{-1} a^{p_1-p_2}t'c = a^{p_1+q_2}t' \rangle.$$
 We will show the $\mathrm{CAT(0)}$ness of $G$ under the assumption that $p_1 - q_1 = p_2 -q_2$. Let $k= p_1 - q_1 = p_2 -q_2$, then $q_1 = p_1-k$ and $q_2 = p_2-k$. Let further $l=p_1-p_2$, then $p_2 = p_1 -l$.  The presentation of $G$ is reduced to  $$\langle a,b,c,t\mid [a,t']=1, b^{-1} t' b = a^{2p_1-k}t',c^{-1} a^{l}t'c = a^{2p_1-k-l}t' \rangle,$$
    where $p_1,k,l$ can be any integer.  Let $m=2p_1-k =p_1+q_1$, then the presentation of $G$ is further reduced to
    $$\langle a,b,c,t\mid [a,t']=1, b^{-1} t' b = a^{m}t',c^{-1} a^{l}t'c = a^{m-l}t' \rangle,$$
    for some integers $m,l$. Therefore, $G$ is the tubular group with a single vertex group $\mathbb{Z}^2$ and edge relations given by
 \begin{eqnarray*}
b^{-1}%
\begin{bmatrix}
0 \\ 
1%
\end{bmatrix}%
b &=&%
\begin{bmatrix}
m \\ 
1%
\end{bmatrix}%
, \\
c^{-1}%
\begin{bmatrix}
l \\ 
1%
\end{bmatrix}%
c &=&%
\begin{bmatrix}
m-l\\ 
1%
\end{bmatrix}%
.
\end{eqnarray*}
We have 
$$ \begin{pmatrix}
0 & m \\ 
1 & 1
\end{pmatrix}^{-1} \begin{pmatrix}
l \\ 
1 
\end{pmatrix} = \frac{1}{m}\begin{pmatrix}
m-l \\ 
l
\end{pmatrix}, $$
$$ \begin{pmatrix}
0 & m \\ 
1 & 1
\end{pmatrix}^{-1} \begin{pmatrix}
m-l \\ 
1 
\end{pmatrix} = \frac{1}{m}\begin{pmatrix}
l\\ 
m-l
\end{pmatrix}.$$

To show that $G$ is $\mathrm{CAT(0)}$, by Theorem \ref{main} (1), it suffices to find $\phi\in (0,\pi)$ such that 

$$(m-l)^2 + l^2 -2\cos(\phi)|(m-l)l|= (l)^2 + (m-l)^2 -2\cos(\phi)|(l)(m-l)| .$$
Note now that the equality holds for any $\phi\in(0,\pi)$, implying that $G$ is indeed $\mathrm{CAT(0)}$.
\end{example}

\section{Free-by-cyclic groups}

In this section, we study when a tubular group is free-by-cyclic and $F_n$-by-$\mathbb{Z}$. Let us start by proving Theorem \ref{main} (2).
\begin{theorem}
\label{th2}Let $G=\langle \mathbb{Z}%
^{2},s_{i}|s_{i}v_{i}s_{i}^{-1}=w_{i},i=1,2,...,k\rangle$. Then $G$ is
free-by-cyclic if and only if $v_{1}-w_{1}$, $v_{2}-w_{2},...,v_{k}-w_{k}$
lie in a common line $l$ which doesn't contain any element of $%
\{v_{1},v_{2},...,v_{k}\}.$
\end{theorem}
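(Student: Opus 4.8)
The plan is to apply Button's criterion for a tubular group to be free-by-cyclic (\cite[Theorem 2.1]{bu}), rephrased in the language of the vectors $v_i, w_i \in \mathbb{Z}^2$. Recall that $G = \langle \mathbb{Z}^2, s_i \mid s_i v_i s_i^{-1} = w_i \rangle$ is a graph-of-groups with a single vertex group $\mathbb{Z}^2$ and edges labelled by the pairs $(v_i, w_i)$. Button's criterion says that such a tubular group surjects onto $\mathbb{Z}$ with free kernel precisely when there is a homomorphism $\chi \colon G \to \mathbb{Z}$ that is injective on the vertex group $\mathbb{Z}^2$ (equivalently, does not kill any primitive element of it) while sending each stable letter $s_i$ somewhere, and such that the induced structure has no ``hanging'' $\mathbb{Z}^2$; the precise combinatorial condition is that a linear functional $f \colon \mathbb{Z}^2 \to \mathbb{Z}$ exists with $f(v_i) = f(w_i)$ for all $i$ (so that $\chi$ is well-defined with $\chi|_{\mathbb{Z}^2} = f$) and $f$ is nonzero on the whole lattice, i.e. $\ker f$ is a line containing no $v_i$. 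I would first state this equivalence carefully and then observe that $f(v_i) = f(w_i)$ for all $i$ is the same as saying $v_i - w_i \in \ker f$ for all $i$; hence all the differences $v_i - w_i$ lie on the single line $\ell = \ker f$, and the requirement that $f$ restricted to $\mathbb{Z}^2$ has no kernel on primitive vectors forces $\ell$ to avoid the $v_i$ (and symmetrically the $w_i$, but since $v_i - w_i \in \ell$, $v_i \in \ell \iff w_i \in \ell$).

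More concretely, the forward direction: if $G$ is free-by-cyclic, pick the surjection $\chi \colon G \twoheadrightarrow \mathbb{Z}$ with free kernel $N$. Since $\mathbb{Z}^2 \leq G$ and $N$ is free, $N \cap \mathbb{Z}^2$ is free abelian and free, hence cyclic or trivial; it cannot be all of $\mathbb{Z}^2$, so $\chi|_{\mathbb{Z}^2}$ has rank $\geq 1$. In fact $\chi|_{\mathbb{Z}^2}$ must have rank exactly $1$ with $N \cap \mathbb{Z}^2 \cong \mathbb{Z}$ a primitive cyclic subgroup --- and here is the key point I must justify: the kernel direction $\ell := \ker(\chi|_{\mathbb{Z}^2})$ cannot contain any $v_i$, for if $v_i \in \ell$ then $v_i$ and $w_i$ (which also lies in $\ell$ since $\chi(v_i) = \chi(w_i)$) together with $s_i$ would generate a Baumslag--Solitar-type or $\mathbb{Z}^2$ subgroup sitting inside $N$ that is not free --- more precisely $\langle v_i, s_i \rangle \leq N$ would be a nontrivial HNN extension of $\mathbb{Z}$ with $v_i \mapsto w_i = v_i^{\pm ?}$... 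I need to be careful: if $v_i, w_i$ are both in the line $\ell \cong \mathbb{Z}$ then $\langle \ell \cap \mathbb{Z}^2, s_i\rangle$ is a Baumslag--Solitar group $BS(p,q)$, which is free only if it is trivial-ish, contradiction. So $\ell$ contains no $v_i$ and $f := \chi|_{\mathbb{Z}^2}$ witnesses the claimed condition. Conversely, given the line $\ell$, define $f \colon \mathbb{Z}^2 \to \mathbb{Z}$ to be a primitive functional with $\ker f \otimes \mathbb{R} = \ell$; then $f(v_i) = f(w_i)$ for all $i$, so $f$ extends to $\chi \colon G \to \mathbb{Z}$ by setting $\chi(s_i) := 0$ (or any values); one then checks via Bass--Serre theory / the action on the Bass--Serre tree crossed with the line, or directly via Button's argument, that $\ker \chi$ acts freely on a tree with trivial or cyclic stabilizers, hence is free.

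The main obstacle I anticipate is the converse direction --- verifying that $\ker \chi$ is genuinely free, not merely torsion-free or of cohomological dimension $\leq 1$ over $\mathbb{Q}$. The clean route is to quote Button's \cite[Theorem 2.1]{bu} directly, which already packages exactly this equivalence (his statement is phrased in terms of the existence of a suitable ``ascending/descending'' or more precisely a fibered class on the tubular group), so the real work is a translation of notation: matching his hypotheses to the geometric condition ``$v_i - w_i$ collinear on a line missing the $v_i$.'' If instead one wants a self-contained argument, one shows $\ker \chi$ is the fundamental group of a graph of groups in which every vertex group is $\ker f \cong \mathbb{Z}$ or trivial and every edge group is trivial (the stable letters become free generators once we quotient by the $\mathbb{Z}$-action), and such a group is free by the structure theorem for groups acting on trees with trivial edge stabilizers --- but one must check the edge stabilizers really are trivial, which is exactly where ``$\ell$ contains no $v_i$'' is used: $s_i v_i s_i^{-1} = w_i$ with $v_i \notin \ell$ means no power of $v_i$ lies in $\ker\chi$, so the edge $s_i$ contributes a free generator rather than an HNN stable letter over a nontrivial subgroup. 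I would present the argument by citing Button for the hard implication and giving the short direct verification for the easy one, flagging that the combinatorial condition on $\ell$ is symmetric in $v$ and $w$ precisely because $v_i - w_i \in \ell$.
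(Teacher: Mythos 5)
Your proposal is correct and takes essentially the same route as the paper: both directions reduce to Button's criterion (a tubular group is free-by-cyclic if and only if it admits a homomorphism to $\mathbb{Z}$ that is nonzero on every edge group), and the theorem is precisely the translation of that criterion into the collinearity condition on the differences $v_i-w_i$, exactly as in your final formulation ``there exists $f$ with $f(v_i)=f(w_i)$ for all $i$ and $\ker f$ a line missing every $v_i$.'' The only caveats are that Button's condition is ``nonzero on each edge group $\langle v_i\rangle$'' rather than ``injective on the vertex group'' (which is impossible for a map $\mathbb{Z}^2\to\mathbb{Z}$), and that your auxiliary Baumslag--Solitar argument for the forward direction tacitly assumes $\chi(s_i)=0$ so that $\langle \ell\cap\mathbb{Z}^2,s_i\rangle$ actually lies in the kernel; neither affects the main line of the proof, which correctly defers to Button for the hard implication, as the paper does.
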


\begin{proof}
It is already known by Button \cite[Theorem 2.1]{bu} that a tubular group is
free-by-cyclic if and only if there exists a homomorphism from $G$ to $%
\mathbb{Z}$, which is non-zero on every edge group.

If $v_{1}-w_{1}$ is parallel to any $v_{i}-w_{i},i=2,...,k,$ and one of them
is nonzero, suppose that $v=%
\begin{pmatrix}
a \\ 
b%
\end{pmatrix}%
$ is a primitive element in $\mathbb{Z}^{2}$ such that $%
v_{1}-w_{1},v_{2}-w_{2},...,v_{k}-w_{k}\in \langle v\rangle .$ Take $%
v^{\prime }=%
\begin{pmatrix}
c \\ 
d%
\end{pmatrix}%
$ be another primitive element such that $\{v,v^{\prime }\}$ is a basis for $%
\mathbb{Z}^{2}$ (for example, choose integers $c,d$ such that $ad-bc=1$). Let 
\begin{equation*}
\mathrm{proj}:\mathbb{Z}^{2}\hookrightarrow \mathbb{R}^{2}\rightarrow \mathbb{Z}
\end{equation*}%
be the projection onto the component $\mathbb{R}v^{\prime }$ (i.e. for any $x=x_1 v+ x_2 v^{\prime}$, we have $\mathrm{proj}(x)=x_2  $). Define 
\begin{eqnarray*}
f &:&G\rightarrow \mathbb{Z}, \\
s_{i} &\rightarrow &0, \\
x &\rightarrow &\mathrm{proj}(x),x\in \mathbb{Z}^{2}.
\end{eqnarray*}%
Since $\mathrm{proj}(v_{i})=\mathrm{proj}(w_{i}),i=1,...,k,$ we know that $f$
is a well-defined group homomorphism. Since the line $\mathbb{R}v$
containing $v_{1}-v_{2},v_{2}-w_{2},...,v_{k}-w_{k}$ does not contain any of 
$\{v_{1},v_{2},...,v_{k}\},$ we know that $\mathrm{proj}(v_{i}),\mathrm{proj}%
(w_{i}),i=1,...,k,$ are nonzero. The criterion of Button proves that $G$ is
free-by-cyclic.

If all $v_{i}-w_{i}=0,$ let $\mathbb{R}%
\begin{pmatrix}
a \\ 
b%
\end{pmatrix}%
$ (for a primitive vector $v=
\begin{pmatrix}
a \\ 
b%
\end{pmatrix}%
\in \mathbb{Z}^{2}$) be a line not containing any $v_i$.
Take $v^{\prime }=%
\begin{pmatrix}
c \\ 
d%
\end{pmatrix}%
$ be another primitive element such that $\{v,v^{\prime }\}$ is a basis for $%
\mathbb{Z}^{2}.$ Let 
\begin{equation*}
\mathrm{proj}:\mathbb{Z}^{2}\hookrightarrow \mathbb{R}^{2}\rightarrow \mathbb{Z}
\end{equation*}%
the projection onto the component $\mathbb{R}v^{\prime }.$ In this case, we
have nonzero $\mathrm{proj}(v_{i}),i=1,2,...,k.$ A similar argument finishes
the proof.

Conversely, if for some $i\neq j,$ the two vectors $v_{i}-w_{i}$, $%
v_{j}-w_{j}$ are not parallel, then $\mathrm{Span}_{\mathbb{Z}%
}(v_{i}-w_{i},v_{j}-w_{j}\}$ is a finite index subgroup of $\mathbb{Z}^{2}.$
For any group homomorphism $f:\mathbb{Z}^{2}\rightarrow \mathbb{Z}$ with $%
f(v_{i})=f(w_{i}),f(v_{j})=f(w_{j}),$ it has $\mathrm{Span}_{\mathbb{Z}%
}(v_{i}-w_{i},v_{j}-w_{j}\}\subset \ker f,$ implying $f$ is actually
trivial. This implies $f(v_{i})=0.$ Therefore, $G$ cannot have a
homomorphism to $\mathbb{Z}$ which is non-zero on every edge group.

If $v_{1}-w_{1}$, $v_{2}-w_{2},...,v_{k}-w_{k}$ lie in a common line $l$,
but $l$ contains some vector $v_{i}$ (or $w_{i}$), any group homomorphism $f:%
\mathbb{Z}^{2}\rightarrow \mathbb{Z}$ with $f(v_{j})=f(w_{j}),j=1,2,...,k,$
has $f(v_{i})=0.$ Again, $G$ cannot have a homomorphism to $\mathbb{Z}$
which is non-zero on every edge group.
\end{proof}

We now prove the following theorem.

\begin{theorem}
\label{th10} If a tubular
group is  free-by-cyclic  (resp. virtually free-by-cyclic), then it is  $F_{n}$-by-$\mathbb{Z}$ (resp. virtually $F_{n}$-by-$\mathbb{Z}$).
\end{theorem}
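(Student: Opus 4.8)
The plan is to use the hypothesis that $G$ is (virtually) free-by-cyclic together with Button's criterion quoted in the proof of Theorem \ref{th2}: a tubular group $G$ splitting as a finite graph of groups with $\mathbb{Z}^2$ vertex groups and $\mathbb{Z}$ edge groups is free-by-cyclic iff there is an epimorphism $\rho\colon G\to\mathbb{Z}$ that is nonzero on every edge group. Passing to a finite-index subgroup $G'\le G$ if necessary (and noting that $G'$ is again tubular, with a graph-of-groups decomposition induced by the Bass--Serre tree), we may assume $G$ itself is free-by-cyclic, i.e.\ such a $\rho$ exists; it then suffices to show the kernel $N=\ker\rho$ is \emph{finitely generated} free, since a finitely generated free group of rank $n$ is exactly $F_n$ (and a finite-index overgroup of an $F_n$-by-$\mathbb{Z}$ group is virtually $F_n$-by-$\mathbb{Z}$). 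The free-ness of $N$ is automatic: $N$ acts on the Bass--Serre tree $T$ of the tubular splitting with vertex stabilizers $N\cap G_v$, and each $G_v\cong\mathbb{Z}^2$ maps to $\mathbb{Z}$ under $\rho$ with infinite image (because $\rho$ is nonzero on the edge groups adjacent to $v$, hence nonzero on $G_v$), so $N\cap G_v$ has infinite index in $\mathbb{Z}^2$ and is therefore cyclic; similarly $N\cap G_e$ is trivial for every edge $e$. Thus $N$ acts on $T$ with cyclic vertex stabilizers and trivial edge stabilizers, so $N$ is the fundamental group of a graph of groups with (at most) $\mathbb{Z}$ vertex groups and trivial edge groups, hence free.

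The real content is \emph{finite generation} of $N$. I would deduce this from the structure theory of BNS invariants, or more concretely as follows. Choose a basis of $H^1(G;\mathbb{R})$; the rational class $[\rho]$ lies in the BNS invariant $\Sigma^1(G)$ precisely when $\ker\rho$ is finitely generated. Since tubular groups are finitely presented, $\Sigma^1(G)$ is an open subset of the character sphere, and for a free-by-cyclic group the distinguished character $\rho$ with $[\rho]\in\Sigma^1(G)$ is exactly the one exhibiting the $F_n$-by-$\mathbb{Z}$ structure. But I do not yet know that \emph{this particular} $\rho$ (the one Button produces) lies in $\Sigma^1$. So the cleaner route is: the hypothesis says $G\cong F_\kappa\rtimes_\psi\mathbb{Z}$ for \emph{some} free group $F_\kappa$ of possibly infinite rank $\kappa$; I want to upgrade $\kappa$ to finite. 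For this I would invoke that $G$ is tubular hence $\mathrm{FP}_2$ (indeed finitely presented and of cohomological dimension $2$ by \cite[Theorem 5.1]{Wi20}), and a finitely presented free-by-cyclic group $F_\kappa\rtimes\mathbb{Z}$ must have $\kappa<\infty$: an infinite-rank free-by-cyclic group is not finitely presentable (e.g.\ its $H_2$ or its Bieri--Strebel obstruction fails). Concretely, if $F_\kappa$ had infinite rank, then $F_\kappa=\ker(G\to\mathbb{Z})$ would not be finitely generated, and by Bieri--Strebel a finitely presented group whose kernel over $\mathbb{Z}$ is not finitely generated must have that kernel infinitely generated in a way incompatible with being free of infinite rank and normal with $\mathbb{Z}$ quotient — more precisely, by Bieri--Strebel \cite{BieriStrebel}, if $G$ is finitely presented and $\ker\rho$ is free, then $\ker\rho$ is finitely generated (this is where $\mathrm{cd}=2$ and finite presentability are used). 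Applying this to the character realizing the free-by-cyclic structure gives finite rank, completing the proof.

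The main obstacle, and the step that needs the most care, is the finite-generation argument: one must make sure the \emph{correct} character is the one witnessing finite generation of the kernel, and that finite presentability of $G$ genuinely forces the free kernel to be finitely generated. I would handle this by the Bieri--Strebel / $\Sigma^1$ dichotomy: for a finitely presented group $G$ with $G\cong F_\kappa\rtimes_\psi\mathbb{Z}$, the associated epimorphism $G\to\mathbb{Z}$ has finitely generated kernel iff its class and its negative both lie in $\Sigma^1(G)$; but an ascending HNN decomposition of $G$ over a free group with proper (non-surjective) monomorphism would make exactly one of $\pm[\rho]$ lie in $\Sigma^1$, whereas the genuine semidirect-product (mapping-torus) structure, which is what $F_\kappa\rtimes_\psi\mathbb{Z}$ means, has \emph{both} $\pm[\rho]$ in $\Sigma^1$ — and $\Sigma^1(G)$ for a free group $F_\kappa$ with $\kappa$ infinite is empty, so $F_\kappa$ cannot admit an automorphism realizing this. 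I expect the cleanest writeup to cite directly that a free-by-cyclic group is finitely presented if and only if the free part has finite rank, which is classical, and then combine it with the tree action above to get both free-ness and finite rank of $N$ simultaneously. For the virtual statement, one finally notes that if a finite-index subgroup is $F_n$-by-$\mathbb{Z}$ then $G$ itself is virtually $F_n$-by-$\mathbb{Z}$ by definition, and conversely a tubular group that is virtually free-by-cyclic has a finite-index subgroup which is free-by-cyclic and (being itself tubular) hence $F_m$-by-$\mathbb{Z}$ for some finite $m$ by the first part.
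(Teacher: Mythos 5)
Your reduction to the non-virtual case and your verification that the kernel of Button's character is free (cyclic vertex stabilizers, trivial edge stabilizers on the Bass--Serre tree) are fine. The gap is in the step you yourself identify as ``the real content'': finite generation of the kernel. The lemma you want to lean on --- that a finitely presented group of the form $F_{\kappa}\rtimes\mathbb{Z}$ must have $\kappa<\infty$ --- is \emph{false}, so no amount of care in applying it will close the argument. A counterexample is $G=F_{2}\ast\mathbb{Z}^{2}$ with the character killing $F_2$ and one generator of $\mathbb{Z}^2$: the kernel acts on the Bass--Serre tree of the free product with trivial edge stabilizers and free vertex stabilizers, hence is free, but it is an infinite free product (the quotient graph is infinite), hence not finitely generated; equivalently $b_1^{(2)}(F_2\ast\mathbb{Z}^2)=2\neq 0$, so by Lemma \ref{leml2} this finitely presented free-by-cyclic group is not $F_n$-by-$\mathbb{Z}$. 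Relatedly, Bieri--Strebel does not say what you attribute to it (it only exhibits $G$ as an HNN extension over a finitely generated subgroup of the kernel, which does not force the kernel to be finitely generated), and your $\Sigma^1$ argument is circular: having a genuine semidirect-product structure $F_\kappa\rtimes_\psi\mathbb{Z}$ does \emph{not} place both $\pm[\rho]$ in $\Sigma^1(G)$ --- that condition is \emph{equivalent} to $\ker\rho$ being finitely generated, which is precisely what you are trying to prove, and it fails for instance for Baumslag's finitely generated $F_\infty$-by-$\mathbb{Z}$ group.

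What actually makes tubular groups special here is not finite presentability but the vanishing of the first $L^2$-Betti number, and this is the route the paper takes: the Chatterji--Hughes--Kropholler formula for groups acting on trees gives $b_1^{(2)}(G)=\sum_v(b_1^{(2)}(G_v)-1/|G_v|)+\sum_e 1/|G_e|=0$ because all vertex groups are $\mathbb{Z}^2$ and all edge groups are infinite (note how the trivial edge groups in $F_2\ast\mathbb{Z}^2$ each contribute $+1$, which is exactly why that example escapes); then Lemma \ref{leml2} (Gardam--Kielak--Logan plus Linnell's verification of the Atiyah conjecture for these groups) upgrades ``free-by-cyclic with $b_1^{(2)}=0$'' to ``$F_n$-by-$\mathbb{Z}$''. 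To repair your write-up you would need to replace the finite-presentability argument with this $L^2$-Betti number input (or some equivalent use of the fact that the edge groups are infinite amenable).
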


To prove Theorem \ref{th10}, we need the following lemma.

\begin{lemma}
\label{leml2}Suppose that $G$ is finitely generated free-by-cyclic. The
group $G$ is $F_{n}$-by-$\mathbb{Z}$ if and only if the first $L^{2}$-Betti
number $b_{1}^{(2)}(G)=0.$
\end{lemma}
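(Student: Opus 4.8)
The plan is to use the standard fact that the first $L^2$-Betti number of a finitely generated group admitting a surjection $\pi\colon G\twoheadrightarrow\mathbb{Z}$ can be computed from the kernel. More precisely, if $N=\ker\pi$ and $t\in G$ projects to a generator of $\mathbb{Z}$, then $G$ is an ascending or descending HNN extension, and by the Lück approximation theorem (applied along the chain of finite-index subgroups $\pi^{-1}(n!\mathbb{Z})$, which exhaust $N$) one gets
\begin{equation*}
b_1^{(2)}(G)=\lim_{j\to\infty}\frac{b_1(\pi^{-1}(j\mathbb{Z});\mathbb{Q})-1}{j}.
\end{equation*}
Alternatively, and more directly, I would invoke the result of Lück (see also the treatment via agrarian/$L^2$-invariants, e.g. work of Kielak) that for a finitely generated group $G$ fibering over $\mathbb{Z}$ with kernel $N$, one has $b_1^{(2)}(G)=0$ if and only if $N$ is finitely generated. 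Since here $N$ is free, "finitely generated" is exactly "$N\cong F_n$ for some finite $n$", which is the claim.

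Concretely, the two directions run as follows. For the "only if" direction, suppose $b_1^{(2)}(G)=0$. Write $G=F\rtimes_\phi\mathbb{Z}$ with $F$ free (possibly of infinite rank). If $F$ had infinite rank, I would exhibit a sequence of finite-index subgroups $G_j=F\rtimes_{\phi^j}\mathbb{Z}\leq G$ with $b_1(G_j;\mathbb{Q})\geq cj+O(1)$ for some $c>0$: this follows because $G_j/[F,F]F^j$-type quotients (the coinvariants $H_1(F;\mathbb{Q})_{\phi^j}$) have dimension growing linearly in $j$ when $F$ is infinitely generated, since $\phi^j$ acts on the infinite-dimensional space $H_1(F;\mathbb{Q})$ and the $j$-fold "averaging'' cannot kill all of it at a sublinear rate. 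Then Lück approximation forces $b_1^{(2)}(G)\geq c>0$, a contradiction. For the "if" direction, if $G$ is $F_n$-by-$\mathbb{Z}$ with $n<\infty$, then each $G_j$ is also $F_{n'}$-by-$\mathbb{Z}$ with $n'\leq (n-1)j+1$ bounded linearly, so $b_1(G_j;\mathbb{Q})\leq n'+1$ grows sublinearly, hence $b_1^{(2)}(G)=0$ by the approximation formula. Actually the cleanest route is: $G_j$ acts freely cocompactly on a $2$-complex with one vertex, $n'$ edges from the free generators plus one from $t$, and $n'$ relators plus... — so $\chi(G_j)=1-(n'+1)+n'=0$, giving $b_2^{(2)}(G_j)-b_1^{(2)}(G_j)=-\chi=0$; combined with $b_1^{(2)}(G_j)=j\,b_1^{(2)}(G)$ and a lower bound $b_1^{(2)}\geq b_2^{(2)}$ being delicate — so I will stick with the Lück-approximation argument, which is self-contained given standard references.

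The main obstacle I anticipate is making the lower bound "$b_1(G_j;\mathbb{Q})$ grows linearly when $F$ has infinite rank" fully rigorous: one needs that the coinvariant spaces $H_1(F;\mathbb{Q})_{\phi^j}=H_1(F;\mathbb{Q})/(\mathrm{id}-\phi^j)$ do not shrink too fast. Since $F$ is countable (as $G$ is finitely generated), $H_1(F;\mathbb{Q})$ is a countable-dimensional $\mathbb{Q}[\phi^{\pm1}]$-module which is \emph{not} finitely generated over $\mathbb{Q}$; one should argue that it is then infinitely generated over $\mathbb{Q}[\phi^{\pm1}]$ as well (otherwise $H_1(N;\mathbb{Q})$ would be a finitely generated module over the Noetherian ring $\mathbb{Q}[t^{\pm 1}]$, hence $N$ would be "$\mathbb{Q}$-homologically finite", contradicting infinite rank of the free group $N$ via, e.g., the fact that a finitely generated free group has finite-dimensional $H_1$ and an infinitely generated one does not even after passing to this module structure). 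Granting that, each $H_1(F;\mathbb{Q})_{\phi^j}$ surjects onto arbitrarily large finite-dimensional pieces, and one extracts the linear growth. If one prefers to sidestep this, the alternative is to cite directly the known equivalence (for finitely generated groups fibering over $\mathbb{Z}$): kernel finitely generated $\iff$ first $L^2$-Betti number vanishes, which appears in the literature on $L^2$-invariants and BNS invariants; I would phrase the final write-up that way and reference it, keeping the approximation argument as a sketch.
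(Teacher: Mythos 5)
There is a genuine gap, and it sits at the heart of both directions of your argument. Your central tool is the formula $b_1^{(2)}(G)=\lim_j\bigl(b_1(\pi^{-1}(j\mathbb{Z});\mathbb{Q})-1\bigr)/j$, but L\"uck approximation requires a chain of finite-index normal subgroups with \emph{trivial} intersection, whereas $\bigcap_j \pi^{-1}(j!\,\mathbb{Z})=\ker\pi$ (you even note the chain ``exhausts $N$''). The formula is in fact false: take $G=F_2\times F_2$ with $\pi$ sending one generator of the first factor to $1$ and the other three generators to $0$. Then $\ker\pi=N_1\times F_2$ with $N_1$ an infinite-rank free group, $b_1(\pi^{-1}(j\mathbb{Z});\mathbb{Q})=(j+1)+2$ grows with slope $1$, and yet $b_1^{(2)}(G)=0$ by the K\"unneth formula. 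The same example refutes the ``known equivalence'' you propose to cite instead: for a finitely generated group fibering over $\mathbb{Z}$, vanishing of $b_1^{(2)}$ is \emph{not} equivalent to the kernel being finitely generated. What rescues the equivalence in the lemma is precisely that the kernel is \emph{free}; the paper's converse direction cites Gardam--Kielak--Logan \cite[Proposition 2.3]{gkl} (torsion-free, Atiyah conjecture, kernel a free product of finitely generated groups, $b_1^{(2)}=0$ $\Rightarrow$ kernel finitely generated), with the Atiyah conjecture supplied by Linnell \cite{Lin}. Your argument never exploits freeness of the kernel in a way the counterexample does not also satisfy, so it cannot be repaired without importing essentially that result.

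Two further points. First, your fallback claim that $H_1(F;\mathbb{Q})$ must be infinitely generated over $\mathbb{Q}[t^{\pm1}]$ when $F$ has infinite rank is also false: for any finitely generated $G$ surjecting onto $\mathbb{Z}$ the kernel is normally generated by finitely many elements, so $H_1(\ker\pi;\mathbb{Q})$ is \emph{always} a finitely generated $\mathbb{Q}[t^{\pm1}]$-module (for $\ker(F_2\to\mathbb{Z})$ it is $\mathbb{Q}[t^{\pm1}]$ itself); the growth rate of $b_1(\pi^{-1}(j\mathbb{Z}))$ is the rank of that module, which, as the example above shows, need not detect $b_1^{(2)}(G)$. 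Second, the forward direction is correct but should be run as L\"uck's mapping torus argument \cite[Theorem 2.1]{luck}, which is what the paper does and which needs no approximation: $G_j=F_n\rtimes_{\phi^j}\mathbb{Z}$ is an index-$j$ subgroup generated by $n+1$ elements, so $j\,b_1^{(2)}(G)=b_1^{(2)}(G_j)\le n+1$ for all $j$, forcing $b_1^{(2)}(G)=0$. (Note also that your bound $n'\le(n-1)j+1$ is the rank of an index-$j$ subgroup of $F_n$, whereas the fiber of $G_j$ is all of $F_n$; in particular $b_1(G_j;\mathbb{Q})\le n+1$ is bounded, not merely ``sublinear''.)
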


\begin{proof}
Suppose that $G=F_{n}\rtimes \mathbb{Z}$. A result of L\"{u}ck \cite[Theorem 2.1]{luck}
 implies that a mapping torus of a finite CW complex has its
first $L^{2}$-Betti number vanishing. This gives that $b_{1}^{(2)}(G)=0.$

Conversely, let $G$ be a finitely generated free-by-cyclic group with $%
b_{1}^{(2)}(G)=0.$ Let us recall \cite[Proposition 2.3]{gkl} of Gardam--Kielak--Logan. Let $G$ be a torsion-free group that satisfies the Atiyah
conjecture. Let $\phi:G\rightarrow \mathbb{Z}$ be an epimorphism whose
kernel $K$ is a free product of finitely generated groups. If the first $%
L^{2}$-Betti number of $G$ is equal to zero, then $K$ is finitely generated.
In our situation, since $G$ is free-by-cyclic, it satisfies the Atiyah
conjecture (by the work of Linnell \cite{Lin}). Therefore, $K=\ker \phi $
is finitely generated.
\end{proof}

\begin{proof}[Proof of Theorem \protect\ref{th10}]
Chatterji--Hughes--Kropholler \cite[Theorem 1.5]{chk}  prove that when a group 
$G$ acts on a tree $T$ with $V,E$ as the $G$-orbits of vertices and edges$,$
we have 
\begin{equation*}
b_{1}^{(G)}=\sum_{v\in V}(b_{1}^{(2)}(G_{v})-\frac{1}{|G_{v}|})+\sum_{e\in E}%
\frac{1}{|G_{e}|},
\end{equation*}%
if $b_{1}^{(2)}(G_{e})=0$ for each $e\in E.$ Let $G$ be a tubular group,
which is a finite graph of groups with vertex group $\mathbb{Z}^{2}$ and
edge group $\mathbb{Z}.$ The group $G$ acts on its Bass-Serre tree. Since $%
\mathbb{Z}^{2}$ has vanishing $L^{2}$-Betti numbers, we know that $%
b_{1}^{(2)}(G)=0$ by the above-mentioned result of
Chatterji--Hughes--Kropholler. Lemma \ref{leml2} implies that a
tubular group $G$ is free-by-cyclic if and only if it is $F_{n}$-by-$\mathbb{%
Z}$. Note that a finite index subgroup of a tubular group is tubular. The
case of virtually free-by-cyclic can be proved similarly. 
\end{proof}

\section{Virtual specialness}

In this section, we study the virtual specialness of some tubular free-by-cyclic groups. 

Let us first recall some results related to actions of tubular groups on CAT (0) cube complexes, see \cite{wi14, Wood2, Wood} for more information. Wise obtained in \cite{wi14} a criterion for a tubular group to act freely on
a (possibly infinite dimensional) $\mathrm{CAT(0)}$ cube complex.
Suppose that a tubular group $G$ is a group that splits as a finite graph $\Gamma$ of groups where each vertex group $G_v$ is isomorphic to $\mathbb{Z}^2$ and each edge group $G_e$ is isomorphic to $\mathbb{Z}$. For each edge $e$ of $\Gamma$, let $v(e_{\rightarrow })$ (resp. $v(e_{\leftarrow })$) denote the initial vertex (resp. the terminal vertex) and $e_{\rightarrow }$ (resp. $e_{\leftarrow}$) denote the generator of the edge
subgroup in $G_{v(e_{\rightarrow })}$ (resp. $G_{v(e_{\leftarrow })}$). For elements $s,v \in \mathbb{Z}^2$, let $\langle
s,v\rangle$ be the subgroup generated by $s,v$. Note that for $$s=%
\begin{bmatrix}
x \\ 
y%
\end{bmatrix}%
,v=%
\begin{bmatrix}
x^{\prime } \\ 
y^{\prime }%
\end{bmatrix}%
,$$ the index $[\mathbb{Z}^{2},\langle s,v\rangle ]=|\det [s,v]|=\#[s,v]$, which is the geometric intersection number of curves in the torus $T^2$ (with $\pi_1(T^2)=\mathbb{Z}^2$) represented by $s,v$. Abusing notation, we also use $s$ to denote the corresponding closed curve in $T^2$.

\begin{lemma}
A tubular group $G(\Gamma )$ acts freely on a $\mathrm{CAT(0)}$ cube complex if and only if
there exists a finite subset $S_{v}\subseteq G_{v}=\mathbb{Z}^{2}$ for each
vertex $v$ satisfying

1) For each edge $e$,
\begin{equation*}
\sum_{s\in S_{v(e_{\rightarrow })}}|[\mathbb{Z}^{2},\langle s,e_{\rightarrow
}\rangle ]|=\sum_{s\in S_{v(e_{\leftarrow })}}|[\mathbb{Z}^{2},\langle
s,e_{\leftarrow }\rangle ]|;
\end{equation*}
 
2) $\langle S_{v}\rangle $ is of finite index in $G_{v}.$
\end{lemma}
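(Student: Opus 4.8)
The statement is Wise's cubulation criterion for tubular groups from \cite{wi14}, and the plan is to prove it by translating between $G$-wallspaces and CAT(0) cube complexes through Sageev's construction. It is convenient to fix the model $G = \pi_1(X)$, where $X$ is the graph of spaces built from one torus $T_v$ with $\pi_1(T_v) = G_v = \mathbb{Z}^2$ per vertex $v$ and one cylinder $S^1 \times [0,1]$ per edge $e$, the two boundary circles of the cylinder of $e$ being glued to the closed curves $e_{\rightarrow} \subset T_{v(e_{\rightarrow})}$ and $e_{\leftarrow} \subset T_{v(e_{\leftarrow})}$.

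For the ``if'' direction I would start from finite sets $S_v \subseteq \mathbb{Z}^2$ satisfying (1) and (2) and build an explicit wallspace. In each torus $T_v$, realise every $s \in S_v$ by an immersed geodesic circle $\gamma_s$; the number of transverse intersection points of $\gamma_s$ with the attaching curve of an incident edge $e$ is exactly $\#[s, e_{\rightarrow}] = [\mathbb{Z}^2 : \langle s, e_{\rightarrow}\rangle]$. Condition (1) says precisely that the two collections of such points on the two boundary circles of the cylinder of $e$ have the same cardinality, so one can choose a bijection between them and join matched pairs by disjoint arcs running across $S^1 \times [0,1]$. Assembling the circles $\gamma_s$ together with these connecting arcs gives an immersed graph $W \to X$ whose preimage in the universal cover $\widetilde X$ is a disjoint union of two-sided walls, each separating $\widetilde X$; this is a $G$-wallspace, and Sageev's construction yields a CAT(0) cube complex $C$ acted on by $G$. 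To see this action is free, note $G$ is torsion-free, so it is enough to show the wallspace is proper, i.e. that the number of walls separating a basepoint $\widetilde x$ from $g\widetilde x$ tends to infinity with $g$. If $g$ is conjugate into some $G_v = \mathbb{Z}^2$, condition (2) guarantees that not every $s \in S_v$ is parallel to $g$, and the parallel family of lifts of such a $\gamma_s$ in $\widetilde T_v = \mathbb{R}^2$ contributes on the order of $|g|$ separating walls; if $g$ acts hyperbolically on the Bass--Serre tree, condition (2) forces $\sum_{s \in S_v}\#[s, e_{\rightarrow}] \geq 1$ for each incident edge, so every cylinder crossed by the axis of $g$ is crossed by a wall and again the count grows. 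Hence the action is metrically proper, and therefore free.

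For the ``only if'' direction, suppose $G$ acts freely on a CAT(0) cube complex $C$. For each vertex group $G_v = \mathbb{Z}^2$, each hyperplane $h$ meets $G_v$ in a subgroup $G_v \cap \mathrm{Stab}(h)$ of rank $0$, $1$, or $2$; the rank-$1$ intersections single out primitive ``directions'' in $\mathbb{Z}^2$, and I would take $S_v$ to record these directions with appropriate multiplicities. Condition (1) should then fall out of a double count: for an edge $e$, both $\sum_{s \in S_{v(e_{\rightarrow})}} \#[s, e_{\rightarrow}]$ and $\sum_{s \in S_{v(e_{\leftarrow})}} \#[s, e_{\leftarrow}]$ count the hyperplanes crossed by the common edge group $G_e = \langle e_{\rightarrow}\rangle = \langle e_{\leftarrow}\rangle$, reckoned modulo $G_e$, and so they agree. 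Condition (2) reflects freeness of the $\mathbb{Z}^2$-action: were $\langle S_v\rangle$ of infinite index, some nontrivial element of $\mathbb{Z}^2$ would be parallel to every hyperplane it meets and would fix a vertex of $C$.

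The step I expect to be the main obstacle is arguing that the sets $S_v$ in the ``only if'' direction can be chosen \emph{finite}. A free action on a possibly infinite-dimensional CAT(0) cube complex can cross infinitely many hyperplane $G_v$-orbits, in infinitely many directions, across a single torus, so before any of the bookkeeping above makes sense one must first replace $C$ by a more economical cube complex on which each torus meets only finitely many hyperplane directions --- for instance a cube complex dual to a $G$-equivariant family of walls that are ``carried'' by the tubular structure of $X$, or one obtained by restricting to essential $\mathbb{Z}^2$-cores at the vertices. Carrying out this reduction, and verifying that conditions (1) and (2) are inherited by the resulting finite data, is the technical heart of the proof and is precisely where the hypotheses ``vertex groups $\cong \mathbb{Z}^2$'' and ``edge groups $\cong \mathbb{Z}$'' enter.
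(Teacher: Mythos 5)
The paper does not prove this statement at all: it is quoted verbatim as Wise's criterion \cite[Theorem 1.1]{wi14}, so there is no in-paper argument to compare against. Measured against Wise's actual proof, your outline of the ``if'' direction is essentially his: realise each $s\in S_v$ as an immersed circle, use condition (1) to match intersection points across each cylinder by arcs, lift to walls in $\widetilde X$, and apply Sageev. Two cautions there. First, Wise's wallspace also includes the walls $X_e\times\{0\}$, and the nontrivial point that each component of the preimage of an immersed wall genuinely separates $\widetilde X$ into two halfspaces needs an argument, not just an assertion. Second, the criterion for $g$ to act without fixed $0$-cube on the dual complex is that $\#(\tilde x\,|\,g^{n}\tilde x)$ is unbounded in $n$, not that $\#(\tilde x\,|\,g\tilde x)\to\infty$ as $g\to\infty$; you are claiming metric properness, which is stronger than what is needed (and than what the lemma asserts), and you do not justify it. The conclusion ``free'' should be routed through the unboundedness-of-separating-walls criterion for each fixed nontrivial $g$, where your two cases (elliptic versus hyperbolic in the Bass--Serre tree) are the right dichotomy.

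The genuine gap is the one you name yourself: in the ``only if'' direction you never produce a \emph{finite} set $S_v$, and the fixes you gesture at (passing to an essential core, or to walls ``carried by the tubular structure'') are vaguer than what is actually needed. The standard way to close this does not require modifying the cube complex: fix a basepoint $x_0$ and generators $a,b$ of $G_v\cong\mathbb Z^2$; any hyperplane that cuts the orbit $G_vx_0$ must separate two adjacent orbit points $gx_0$ and $gax_0$ (or $gbx_0$) along a path in the orbit, hence is a $G_v$-translate of one of the finitely many hyperplanes separating $x_0$ from $ax_0$ or from $bx_0$. So there are only finitely many $G_v$-orbits of hyperplanes cutting $G_vx_0$; each such orbit has rank-one stabilizer intersection with $G_v$ (rank two would force a fixed point, rank zero would contradict that it cuts the orbit), and recording these slopes with multiplicity gives the finite $S_v$. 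Only after this step does your double-count for condition (1) (both sides asymptotically count hyperplanes separating $x_0$ from $e_\rightarrow^nx_0=e_\leftarrow^nx_0$) and your infinite-index argument for condition (2) make sense. As written, the proposal is an accurate map of the proof but with its hardest chamber left unexplored.
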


Such a collection of finite sets $\{
S_{v}\}_{v\in \Gamma^0}$ is called an equitable set. The tubular group $G(\Gamma)$ is the fundamental group of a graph of spaces $X(\Gamma)$, with vertex space a torus $X_v=S^1 \times S^1$ for each vertex $v$ and edge space $X_e=S^1$ for each edge $e$ of $\Gamma$. The space $X(\Gamma)$ is obtained from the union of $X_v=S^1 \times S^1$ by attaching cylinders $X_e \times [-1,1]$. An immersed wall is either of the form $X_e \times 0$ 
 or a connected component of a graph built from the disjoint union of all the circles in $S_v=\{ \sigma_{vj} : v \in \Gamma^0, 1 \leq j \leq |S_v| \}$  together with some connecting arcs  joining points in $e_{\leftarrow} \cap \sigma_{v(e_{\leftarrow}),i} $ bijectively with points in $e_{\rightarrow} \cap \sigma_{v(e_{\rightarrow}),i'}$ for each edge $e$. Here one uses the fact that $\{
S_{v}\}_{v\in \Gamma^0}$ is equitable, so the number of intersections is the same.

Woodhouse shows in \cite[Theorem 1.1]{Wood} that a tubular group $G$ is virtually special if and only if it acts freely on a finite-dimensional $\mathrm{CAT(0)}$ cube complex. He also proves in \cite[Theorem 1.2]{Wood2}  that the dual cube complex $C(\tilde{X},  W)$ (the wall space defined by Wise \cite{wi14}) is finite dimensional if and only if each immersed wall is non-dilated (as described in the following). 

Let $\Lambda $ be an immersed wall (not consisting of a single immersed circle) and let $q:\Lambda \rightarrow \Omega $
be the quotient map obtained by quotienting each circle in the equitable set $S_v$
to a vertex. The dilation function $R:\pi _{1}(\Lambda )\rightarrow \mathbb{Q%
}^{\ast }$ factors through $q_{\ast }:\pi _{1}(\Lambda )\rightarrow \pi
_{1}(\Omega )$ \cite[Section 5]{Wood2}:
\begin{equation*}
\begin{array}{ccc}
\pi _{1}(\Lambda ) & \overset{R}{\rightarrow } & \mathbb{Q}^{\ast } \\ 
q_{\ast }\downarrow & \nearrow R_{\ast } &  \\ 
\pi _{1}(\Omega ) &  & 
\end{array}%
\end{equation*}%
Given an orientation to each edge in $\Omega $ such that all edges in the
same edge space have the same orientations. Let $X_{e}$ be an edge space in $%
X$, and let $\alpha $ be an arc  in $X_{e}$ connecting the circles $%
C_{\rightarrow}\looparrowright X_{v({e_{\rightarrow})}},C_{\leftarrow}\looparrowright X_{v(e_{\leftarrow})}.$ Let $\alpha
_{\leftarrow},\alpha _{\rightarrow}$ be the corresponding elements of $C_{\leftarrow},C_{\rightarrow}$ in the
equitable set. Define%
\begin{equation*}
\omega (\alpha )=\frac{|\det [e_{\rightarrow },\alpha _{\rightarrow}]|}{|\det
[e_{\leftarrow },a_{\leftarrow}]|}.
\end{equation*}%
If $\gamma =\sigma _{1}^{\varepsilon _{1}}\sigma _{2}^{\varepsilon
_{2}}\cdots \sigma _{n}^{\varepsilon _{n}}$ is an edge path in $\Omega $
where $\sigma _{i}$ is an oriented arc in $\Omega $, and $\varepsilon
_{i}=\pm 1,$ then define $\omega (\alpha )=\omega (\sigma _{1})^{\varepsilon
_{1}}\omega (\sigma _{2})^{\varepsilon _{2}}\cdots \omega (\sigma
_{n})^{\varepsilon _{n}}$. Moreover, when $\alpha \in \pi_1(\Omega)$, we declare that $R^\ast(\alpha):= \omega(\alpha)$. The immersed wall $\Lambda $ is called
dilated if $\mathrm{Im}R=\mathrm{Im}R_{\ast }$ is infinite. Otherwise, an immersed wall is called non-dilated (see Woodhouse \cite[Definition 4.5]{Wood2}). 

Theorem \ref{main} (3) is the following.
\begin{theorem}
\label{th3}When $\{v_{1},w_{1}\}$ are linearly independent and 
\begin{eqnarray*}
\det [w_{1}-v_{1},v_{i}] &=&\pm \det [w_{1}-v_{1},w_{i}], \\
\det [w_{1}+v_{1},v_{i}] &=&\pm \det [w_{1}+v_{1},w_{i}],i=2,...,k,
\end{eqnarray*}%
the group $G=\langle \mathbb{Z}%
^{2},s_{i}|s_{i}v_{i}s_{i}^{-1}=w_{i},i=1,2,...,k\rangle $ is virtually
special.
\end{theorem}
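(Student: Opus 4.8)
The plan is to exhibit an equitable set $\{S_v\}$ (here with a single vertex $v$, so just one finite set $S \subseteq \mathbb{Z}^2$) witnessing a free action of $G$ on a CAT(0) cube complex, and then to verify that the resulting immersed walls are all non-dilated, so that the cube complex is finite-dimensional; Woodhouse's theorem \cite[Theorem 1.1]{Wood} then upgrades this to virtual specialness. The natural choice for $S$ is suggested by the hypotheses: since $\det[w_1 - v_1, v_i] = \pm\det[w_1 - v_1, w_i]$ and $\det[w_1 + v_1, v_i] = \pm\det[w_1 + v_1, w_i]$, the two vectors $w_1 - v_1$ and $w_1 + v_1$ each ``balance'' every edge relation $s_i v_i s_i^{-1} = w_i$ in the sense of the equitable-set condition. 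So I would take $S = \{ \lambda(w_1 - v_1),\ \mu(w_1 + v_1) \}$ for suitable nonzero integers $\lambda, \mu$; because $\{v_1, w_1\}$ is linearly independent, so is $\{w_1 - v_1,\ w_1 + v_1\}$, hence $\langle S \rangle$ has finite index in $\mathbb{Z}^2$, giving condition (2). For condition (1), one computes for each edge $s_i$ that $\sum_{s \in S}|\det[s, v_i]| = |\lambda|\,|\det[w_1-v_1,v_i]| + |\mu|\,|\det[w_1+v_1,v_i]|$, which by hypothesis equals $|\lambda|\,|\det[w_1-v_1,w_i]| + |\mu|\,|\det[w_1+v_1,w_i]| = \sum_{s \in S}|\det[s, w_i]|$; the first relation $s_1 v_1 s_1^{-1} = w_1$ is handled separately (and trivially, since $\det[w_1-v_1,v_1] = \det[w_1-v_1,w_1]$ and likewise for the sum). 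Choosing $\lambda = \mu = 1$ should already work, or one may need a small common scaling to make $\langle S\rangle$ genuinely finite index when $w_1 \pm v_1$ are not primitive — either way this step is elementary.

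Having fixed the equitable set, the group $G$ acts freely on the Wise dual cube complex, and by Woodhouse \cite[Theorem 1.2]{Wood2} this complex is finite-dimensional precisely when every immersed wall is non-dilated. So the heart of the argument is: for each immersed wall $\Lambda$ built from this particular $S$, show that the dilation map $R_\ast : \pi_1(\Omega) \to \mathbb{Q}^\ast$ has finite image. Because there is a single vertex, $\Omega$ is a wedge of circles (one arc for each intersection point along each edge), and $R_\ast$ is determined by the values $\omega(\sigma) = |\det[e_\rightarrow, \alpha_\rightarrow]| / |\det[e_\leftarrow, \alpha_\leftarrow]|$ on these arcs. Here every $\alpha_\rightarrow, \alpha_\leftarrow$ is one of the two vectors $\pm(w_1 - v_1), \pm(w_1 + v_1)$ (scaled), and every $e_\rightarrow = v_i$, $e_\leftarrow = w_i$. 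The hypothesis $\det[w_1 \pm v_1, v_i] = \pm \det[w_1 \pm v_1, w_i]$ forces each individual $\omega(\sigma)$ to lie in $\{1\}$ up to sign considerations — i.e. $|\det[v_i, \alpha]| = |\det[w_i, \alpha']|$ whenever $\alpha, \alpha'$ are the paired endpoints along edge $s_i$ — so $\omega$ is identically $\pm 1$ on arcs, hence $\mathrm{Im}\,R_\ast \subseteq \{\pm 1\}$ is finite and $\Lambda$ is non-dilated. I would present this as a lemma: \emph{with the equitable set $S = \{w_1 - v_1,\ w_1 + v_1\}$, each arc $\alpha$ in each edge space satisfies $\omega(\alpha) = 1$}, which reduces to checking that the geometric intersection numbers of the two ``balancing'' curves with $v_i$ and with $w_i$ agree, exactly the content of the displayed determinant identities.

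The main obstacle I anticipate is bookkeeping rather than a deep idea: one must be careful that the two sides of the equitable-set equation (1) count intersection numbers with multiplicity matching the way arcs are glued, and that the $\pm$ signs in the hypotheses — which allow $\det[w_1-v_1,v_i] = -\det[w_1-v_1,w_i]$, not just $+$ — still give equality of \emph{absolute values} of determinants, which is all that enters both the equitable condition and the dilation function $\omega$ (both defined via $|\det[\cdot,\cdot]|$). Since $\omega$ and the equitable sums only ever see $|\det|$, the sign ambiguity is harmless, and a single uniform scaling constant on $S$ (if needed to clear denominators / ensure finite index) does not affect non-dilation because it multiplies numerator and denominator of each $\omega(\sigma)$ equally. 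So the clean write-up is: (i) define $S$, check (1) and (2) via the determinant identities, invoking the single-vertex version of Wise's lemma to get a free action; (ii) prove the lemma $\omega \equiv 1$ on all arcs, hence all immersed walls are non-dilated; (iii) conclude finite-dimensionality by Woodhouse \cite[Theorem 1.2]{Wood2} and virtual specialness by Woodhouse \cite[Theorem 1.1]{Wood}.
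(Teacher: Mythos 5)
Your proposal is correct and follows essentially the same route as the paper: the same equitable set $S=\{w_1-v_1,\,w_1+v_1\}$, the same verification that the displayed determinant identities force equal geometric intersection numbers on the two ends of each edge space (hence weight $1$ on each arc and non-dilated walls), and the same appeal to Woodhouse's two theorems to conclude finite-dimensionality and virtual specialness. The only cosmetic difference is your precautionary scaling of $S$, which the paper does not need since linear independence of $\{v_1,w_1\}$ already makes $\langle S\rangle$ finite index.
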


\begin{proof}
Choose 
\begin{equation*}
S=\{z_{1}:=w_{1}-v_{1},z_{2}:=w_{1}+v_{1}\}.
\end{equation*}%
By Woodhouse \cite[Theorem 1.1]{Wood}, it is enough to prove that $G$ acts freely
on a finite-dimensional $\mathrm{CAT(0)}$ cube complex. According to \cite[Theorem 1.2]{Wood2}, it is enough to show that each immersed wall is non-dilated (so that the wall space $C(\tilde{X},  W)$ is finite dimensional). 
Note that $S$ consists of two elements and an immersed wall is supported on the two
elements. For each stable letter $s_i$, denote by $s_i^{-},s_i^{+}$ the two curves in the vertex space represented by $v_i,w_i$. The intersection numbers are (assuming $i\geq 2$)%
\begin{eqnarray*}
\#[z_{1},s_{1}^{-}] &=&|\det [w_{1}-v_{1},v_{1}]|=|\det [w_{1},v_{1}]|=|\det
[w_{1}-v_{1},w_{1}]=\#[z_{1},s_{1}^{+}], \\
\#[z_{2},s_{1}^{-}] &=&|\det [w_{1}+v_{1},v_{1}]|=|\det [w_{1},v_{1}]|=|\det
[w_{1}+v_{1},w_{1}]=\#[z_{2},s_{1}^{+}], \\
\#[z_{1},s_{i}^{-}] &=&|\det [w_{1}-v_{1},v_{i}]|=|\det
[w_{1}-v_{1},w_{i}]|=\#[z_{1},s_{i}^{+}], \\
\#[z_{2},s_{i}^{-}] &=&|\det [w_{1}+v_{1},v_{i}]|=|\det
[w_{1}+v_{1},w_{i}]|=\#[z_{2},s_{i}^{+}].
\end{eqnarray*}%
Since the two ends of any edge space have the same intersection numbers
with $z_{i},i=1,2$, we can choose immersed walls $\Lambda $ connecting the
intersection points in $z_{i}\cap e_{s_{i}\rightarrow }$ to $z_{i}\cap
e_{s_{i}\leftarrow }$ as in Figure \ref{fig:graph}.

\begin{figure}
    \centering
    \includegraphics[width=0.8\linewidth]{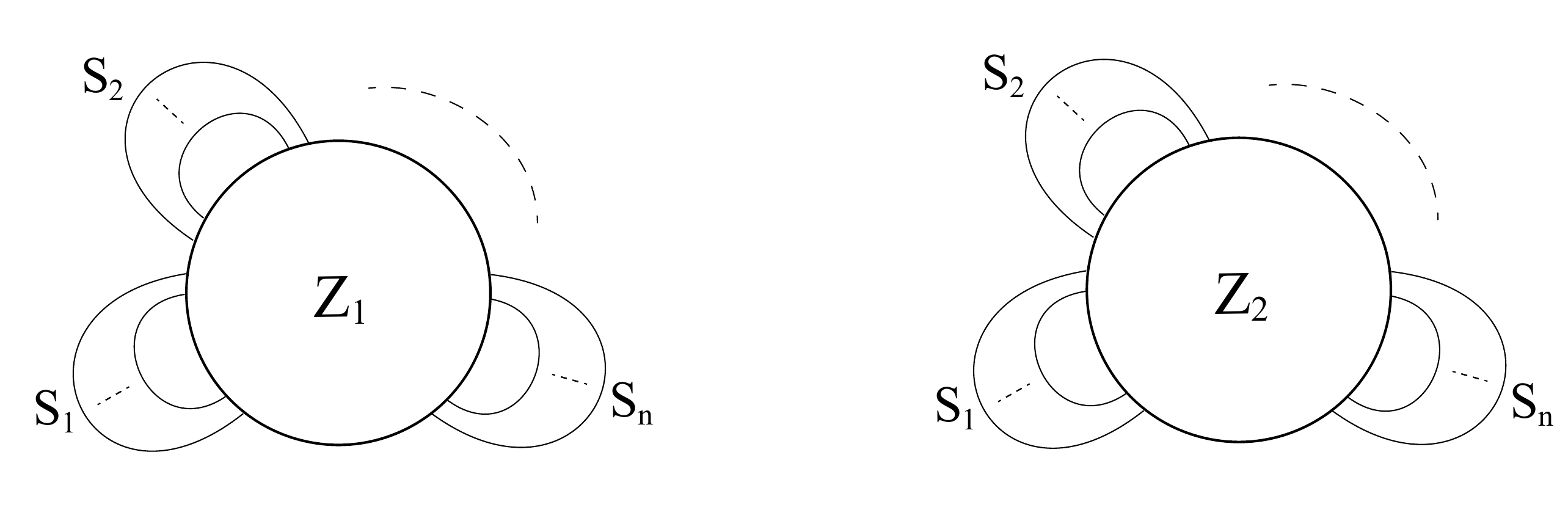}
    \caption{Immersed walls}
    \label{fig:graph}
\end{figure}

Let $\Omega $ be the quotient obtained from $\Lambda $ by quotient each
circle in the equitable set to a vertex. Note that each closed loop in the
quotient $\Omega $ is a composition of edges labeled by $s_{1},$ $%
s_{i},i=2,...,k.$ But each single edge $e_{s_{1}},e_{s_{i}}$ labeled by $%
s_{1},s_{i}$ has the weight 
\begin{eqnarray*}
w(e_{s_{1}}) &=&\frac{\#[z_{1},e_{s_{1}}^{-}]}{\#[z_{1},e_{s_{1}}^{+}]}=1=%
\frac{\#[z_{2},e_{s_{1}}^{-}]}{\#[z_{2},e_{s_{1}}^{+}]}, \\
w(e_{s_{i}}) &=&\frac{\#[z_{1},e_{c}^{-}]}{\#[z_{1},e_{c}^{+}]}=1=\frac{%
\#[z_{2},e_{s_{i}}^{-}]}{\#[z_{2},e_{s_{i}}^{+}]}.
\end{eqnarray*}%
Therefore, each immersed wall is non-dilated and the group $G$ is virtually
special. When there is
only one vertex space, it is also noted by Wise \cite[Remark 3.6]{wi14}) that it is enough to prove the geometric intersection numbers $\#[z_{1},s_{i}^{-}]=%
\#[z_{1},s_{i}^{+}]$ for any $i=1,2,...,k$ to have a finite-dimensional wall space.
\end{proof}

\begin{corollary}
Suppose that the group $G=\langle \mathbb{Z}%
^{2},s_{i}|s_{i}v_{i}s_{i}^{-1}=w_{i},i=1,2\rangle $ is free-by-cyclic. When
$\{v_{1},w_{1}\}$ are linearly independent and $\det [v_{1},v_{2}]=\det
[v_{1},w_{2}],$ the group $G$ is virtually special.
\end{corollary}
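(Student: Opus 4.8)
The plan is to derive this corollary directly from Theorem \ref{th3}, which requires the four determinant identities
\begin{align*}
\det[w_1-v_1,v_i] &= \pm\det[w_1-v_1,w_i],\\
\det[w_1+v_1,v_i] &= \pm\det[w_1+v_1,w_i]
\end{align*}
for $i=2$ (here $k=2$). So the task is to show that these follow from the two given hypotheses: that $\{v_1,w_1\}$ is linearly independent and that $\det[v_1,v_2]=\det[v_1,w_2]$. First I would use the free-by-cyclic hypothesis together with Theorem \ref{th2}: since $G$ is free-by-cyclic, the vectors $v_1-w_1$ and $v_2-w_2$ must lie on a common line $l$ through the origin (and $l$ avoids $v_1,v_2$). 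In particular $v_1-w_1$ and $v_2-w_2$ are parallel, so $\det[v_1-w_1,\,v_2-w_2]=0$, i.e. $\det[w_1-v_1,\,v_2-w_2]=0$, which gives $\det[w_1-v_1,v_2]=\det[w_1-v_1,w_2]$. That settles the first pair of identities with the plus sign.

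For the second pair, I would expand $\det[w_1+v_1,v_2]$ and $\det[w_1+v_1,w_2]$ by bilinearity of the determinant in its columns:
\begin{align*}
\det[w_1+v_1,v_2] &= \det[w_1,v_2]+\det[v_1,v_2],\\
\det[w_1+v_1,w_2] &= \det[w_1,w_2]+\det[v_1,w_2].
\end{align*}
The hypothesis $\det[v_1,v_2]=\det[v_1,w_2]$ matches the second summands. For the first summands I need $\det[w_1,v_2]$ and $\det[w_1,w_2]$ to be related; here I would again invoke the parallelism $v_2-w_2 = \lambda(v_1-w_1)$ for some scalar $\lambda$ (rational, in fact). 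Then $\det[w_1,v_2]-\det[w_1,w_2]=\det[w_1,v_2-w_2]=\lambda\det[w_1,v_1-w_1]=\lambda\det[w_1,v_1]=-\lambda\det[v_1,w_1]$. On the other hand, the first identity already established says $\det[w_1-v_1,v_2-w_2]=0$, which expands (using $v_2-w_2=\lambda(v_1-w_1)$) to $0$ automatically, so that gives no new information; instead I should combine $\det[w_1,v_2]-\det[w_1,w_2]=-\lambda\det[v_1,w_1]$ with $\det[v_1,v_2]-\det[v_1,w_2]=\det[v_1,v_2-w_2]=\lambda\det[v_1,v_1-w_1]=-\lambda\det[v_1,w_1]=0$. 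Since $\{v_1,w_1\}$ is linearly independent, $\det[v_1,w_1]\neq 0$, forcing $\lambda=0$, hence $v_2=w_2$. But then trivially $\det[w_1+v_1,v_2]=\det[w_1+v_1,w_2]$, and all four identities hold with the plus sign.

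The one subtlety — and the main point to get right — is the degenerate-looking conclusion $\lambda=0$, i.e. $v_2=w_2$: I should double-check that the free-by-cyclic hypothesis is genuinely being used (it is, to get parallelism of $v_1-w_1$ and $v_2-w_2$) and that $v_2=w_2$ is consistent with $G$ being free-by-cyclic and tubular — indeed it is, since the edge relation $s_2v_2s_2^{-1}=v_2$ just means $s_2$ commutes with $v_2$. Alternatively, if one prefers not to conclude $v_2=w_2$, one can argue more robustly: from $\det[v_1,v_2]=\det[v_1,w_2]$ we get $v_2-w_2\in\ker(\det[v_1,\cdot])=\mathbb{R}v_1$, while from free-by-cyclicness $v_2-w_2\in\mathbb{R}(v_1-w_1)$; since $v_1$ and $v_1-w_1$ are linearly independent (as $\{v_1,w_1\}$ is), these two lines meet only at $0$, so $v_2=w_2$. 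Either way, once the four identities are in hand, Theorem \ref{th3} applies verbatim and concludes that $G$ is virtually special. $\qed$
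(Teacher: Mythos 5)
Your proposal is correct, and its overall skeleton is the same as the paper's: use Theorem \ref{th2} to get that $v_1-w_1$ and $v_2-w_2$ are parallel, deduce $\det[w_1-v_1,v_2]=\det[w_1-v_1,w_2]$, and then feed the determinant identities into Theorem \ref{th3}. Where you diverge is in the verification of the second pair of identities. The paper does this by the one-line expansion $\det[w_1\pm v_1,\cdot]=\det[w_1-v_1,\cdot]+ \det[2v_1,\cdot]$ (for the $+$ case) and observes that both summands agree for $v_2$ and $w_2$ — the first by parallelism, the second by the hypothesis $\det[v_1,v_2]=\det[v_1,w_2]$ — so the identity holds with the plus sign and no further analysis is needed. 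You instead push the two constraints $v_2-w_2\in\mathbb{R}(v_1-w_1)$ and $v_2-w_2\in\mathbb{R}v_1$ to their logical conclusion: since $v_1$ and $v_1-w_1$ are linearly independent, these lines meet only at the origin, so $v_2=w_2$ and everything is trivially satisfied. Both arguments are valid, and your $\lambda=0$ computation is correct. What your route buys is the observation — which the paper's direct computation conceals — that the hypotheses of this corollary force the degenerate situation $v_2=w_2$ (i.e.\ the second stable letter commutes with $v_2$), so the corollary as stated covers only that case; this suggests the statement is weaker than one might expect, and that a hypothesis such as $\det[v_1,v_2]=\pm\det[v_1,w_2]$ would be needed to capture nondegenerate examples. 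What the paper's route buys is brevity and the fact that it works verbatim without ever needing to identify the degeneracy.
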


\begin{proof}
When $G$ is free-by-cyclic, we know that $w_{1}-v_{1}$ is parrell to $%
w_{2}-v_{2}$ by Theorem \ref{th2}. This implies that $\det [w_{1}-v_{1},w_{2}-v_{2}]=0$ and $\det
[w_{1}-v_{1},v_{2}]=\det [w_{1}-v_{1},w_{2}].$ Now 
\begin{eqnarray*}
\det [w_{1}+v_{1},v_{2}] &=&\det [w_{1}-v_{1},v_{2}]+\det
[2v_{1},v_{2}], \\
\det [w_{1}+v_{1},w_{2}]&=&\det [w_{1}-v_{1},w_{2}]+\det [2v_{1},w_{2}].
\end{eqnarray*}
When $\det [v_{1},v_{2}]=\det [v_{1},w_{2}],$ Theorem \ref{th3} finishes
the proof.
\end{proof}

Now we consider $\mathrm{CAT(0)}$ free-by-cyclic tubular groups.

\begin{lemma}
\label{lemcat}Let $G=\langle \mathbb{Z}%
^{2},s_{i}|s_{i}v_{i}s_{i}^{-1}=w_{i},i=1,2,...,k\rangle $ be a multiple HNN
extension of the $\mathbb{Z}^{2}$. Suppose that%
\begin{equation*}
v_{1}=%
\begin{pmatrix}
k_{1} \\ 
0%
\end{pmatrix}%
,w_{1}=%
\begin{pmatrix}
0 \\ 
k_{2}%
\end{pmatrix}%
\end{equation*}%
for some nonzero integers $k_{1},k_{2}$ and 
\begin{equation*}
v_{i}=%
\begin{pmatrix}
a_{i} \\ 
b_{i}%
\end{pmatrix}%
,w_{i}=%
\begin{pmatrix}
c_{i} \\ 
d_{i}%
\end{pmatrix}%
,i=2,...,k.
\end{equation*}%
If $G$ is $\mathrm{CAT(0)}$ and free-by-cyclic, then 
\begin{equation*}
a_{i}b_{i}=c_{i}d_{i}
\end{equation*}%
for each $i=2,...,k.$
\end{lemma}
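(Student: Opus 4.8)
The plan is to combine the two hypotheses through the normalized coordinates already appearing in Theorem~\ref{th1}. Since $v_1=\binom{k_1}{0}$ and $w_1=\binom{0}{k_2}$ with $k_1,k_2\neq 0$, the pair $\{v_1,w_1\}$ is linearly independent and $[v_1,w_1]$ is the diagonal matrix with diagonal entries $k_1,k_2$. Hence, writing $[v_1,w_1]^{-1}v_i=\binom{x_i}{y_i}$ and $[v_1,w_1]^{-1}w_i=\binom{x_i'}{y_i'}$ as in Theorem~\ref{th1}, one has $x_i=a_i/k_1$, $y_i=b_i/k_2$, $x_i'=c_i/k_1$, $y_i'=d_i/k_2$, and the asserted identity $a_ib_i=c_id_i$ is equivalent to $x_iy_i=x_i'y_i'$. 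So it suffices to prove $x_iy_i=x_i'y_i'$ for $i=2,\dots,k$.

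First I would extract what $\mathrm{CAT(0)}$-ness gives. By Lemma~\ref{lem1} there is $A\in\mathrm{GL}_2(\mathbb{R})$ with $\lVert Av_j\rVert=\lVert Aw_j\rVert$ for all $j$. Put $B=A[v_1,w_1]=[B_1,B_2]$ as in the proof of Theorem~\ref{th1}; then $B$ is invertible, so $B_1,B_2$ are linearly independent, $\lVert Av_1\rVert=\lVert B_1\rVert=\lVert B_2\rVert=\lVert Aw_1\rVert=:r>0$, and $Av_i=x_iB_1+y_iB_2$, $Aw_i=x_i'B_1+y_i'B_2$. Setting $c:=\langle B_1,B_2\rangle/r^2=\cos\theta$, where $\theta\in(0,\pi)$ is the angle between $B_1$ and $B_2$ (so in particular $c<1$), expanding $\lVert Av_i\rVert^2=\lVert Aw_i\rVert^2$ by the law of cosines and dividing by $r^2$ gives, for each $i\ge 2$,
\begin{equation*}
x_i^2+y_i^2+2c\,x_iy_i=(x_i')^2+(y_i')^2+2c\,x_i'y_i'.
\end{equation*}

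Next I would bring in the free-by-cyclic hypothesis via Theorem~\ref{th2}: the vectors $v_j-w_j$ all lie on the line spanned by $v_1-w_1=\binom{k_1}{-k_2}$. In particular $v_i-w_i=\binom{a_i-c_i}{b_i-d_i}$ is parallel to $\binom{k_1}{-k_2}$, i.e.\ $k_2(a_i-c_i)+k_1(b_i-d_i)=0$, which after dividing by $k_1k_2$ reads $x_i+y_i=x_i'+y_i'$. Substituting $x_i^2+y_i^2=(x_i+y_i)^2-2x_iy_i$ (and the analogous identity for the primed variables) into the displayed relation, the equal squares $(x_i+y_i)^2=(x_i'+y_i')^2$ cancel and one is left with $2(c-1)(x_iy_i-x_i'y_i')=0$. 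Since $c=\cos\theta<1$, this forces $x_iy_i=x_i'y_i'$, hence $a_ib_i=c_id_i$, for every $i=2,\dots,k$.

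There is no real obstacle here: the argument is essentially bookkeeping, and the only points needing care are that both hypotheses are genuinely used (CAT(0)-ness supplies the quadratic relation, free-by-cyclicity pins down the linear combination $x_i+y_i$) and that the invertibility of $B$, which guarantees $\theta\in(0,\pi)$ and hence $c\neq 1$, is exactly what licenses the final division. One could equally well run the same cancellation starting from the $|x_iy_i|\cos\phi$ form in Theorem~\ref{th1}, replacing the last step by the observation that $t\mapsto t+|t|\cos\phi$ is strictly increasing on $\mathbb{R}$ for $\phi\in(0,\pi)$.
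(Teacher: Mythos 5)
Your proof is correct and follows essentially the same route as the paper: both use Theorem~\ref{th2} to get the linear relation $x_i+y_i=x_i'+y_i'$ (the paper's (Eq1)), combine it with the quadratic relation coming from $\lVert Av_i\rVert=\lVert Aw_i\rVert$, and conclude from $|\cos\theta|<1$. Your only (cosmetic, and arguably cleaner) deviation is expanding the norm with the signed inner product $2c\,x_iy_i$ directly from Lemma~\ref{lem1} rather than the absolute-value form $-2|x_iy_i|\cos\phi$ of Theorem~\ref{th1}, which lets you avoid the paper's final case analysis via $|x-y|\ge \bigl||x|-|y|\bigr|$.
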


\begin{proof}
When $G$ is free-by-cyclic, Theorem \ref{th2} implies that $v_{i}-w_{i}=%
\begin{pmatrix}
a_{i}-c_{i} \\ 
b_{i}-d_{i}%
\end{pmatrix}%
$ is parallel to $v_{1}-w_{1}=%
\begin{pmatrix}
k_{1} \\ 
-k_{2}%
\end{pmatrix}%
.$ Therefore, we have%
\begin{equation}
\frac{a_{i}-c_{i}}{k_{1}}=\frac{b_{i}-d_{i}}{-k_{2}}  \tag{Eq1}
\end{equation}%
for each $i=2,...,k.$ When $G$ is $\mathrm{CAT(0)}$, Theorem \ref{th1} implies that 
\begin{equation*}
(\frac{a_{i}}{k_{1}})^{2}+(\frac{b_{i}}{k_{2}})^{2}-|\frac{2a_{i}b_{i}}{%
k_{1}k_{2}}|\cos \phi =(\frac{c_{i}}{k_{1}})^{2}+(\frac{d_{i}}{k_{2}})^{2}-%
|\frac{2c_{i}d_{i}}{k_{1}k_{2}}|\cos \phi
\end{equation*}%
for some common $\phi \in (0,\pi )$. So we have
\begin{eqnarray*}
(\frac{a_{i}}{k_{1}})^{2}-(\frac{c_{i}}{k_{1}})^{2}+(\frac{b_{i}}{k_{2}}%
)^{2}-(\frac{d_{i}}{k_{2}})^{2} &=&(|\frac{2a_{i}b_{i}}{k_{1}k_{2}}|-|\frac{%
2c_{i}d_{i}}{k_{1}k_{2}}|)\cos \phi , \\
\frac{(a_{i}-c_{i})}{k_{1}}\frac{(a_{i}+c_{i})}{k_{1}}+\frac{(b_{i}-d_{i})}{%
k_{2}}\frac{(b_{i}+d_{i})}{k_{2}} &=&\frac{2(|a_{i}b_{i}|-|c_{i}d_{i}|)}{|k_{1}k_{2}|%
}\cos \phi , \\
-\frac{(b_{i}-d_{i})(a_{i}+c_{i})}{k_{1}k_{2}}-\frac{%
(a_{i}-c_{i})(b_{i}+d_{i})}{k_{1}k_{2}} &=&\frac{2(|a_{i}b_{i}|-|c_{i}d_{i}|)}{%
|k_{1}k_{2}|}\cos \phi \text{ (by (Eq1))}, \\
-(a_{i}b_{i}-c_{i}d_{i}) &=&\pm (|a_{i}b_{i}|-|c_{i}d_{i}|)\cos \phi .
\end{eqnarray*}%
Since  $|x-y|\geq ||x|-|y||$ for any real numbers $x,y$ and $\cos \phi \neq \pm 1,$ we get that $a_{i}b_{i}=c_{i}d_{i}$ for each $%
i=2,...,k.$
\end{proof}

\begin{theorem}\label{thm:fr-cyc-vspel}
Let $G=\langle \mathbb{Z}^{2},s_{i}|s_{i}v_{i}s_{i}^{-1}=w_{i},i=1,2,...,k%
\rangle $ be a $\mathrm{CAT(0)}$ free-by-cyclic tubular group. Then $G$ is virtually
special.
\end{theorem}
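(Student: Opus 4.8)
The strategy is to reduce the general statement to Theorem \ref{th3} by a change of basis on the vertex group $\mathbb{Z}^2$. By Theorem \ref{th2}, since $G$ is free-by-cyclic, the vectors $v_1 - w_1, v_2 - w_2, \dots, v_k - w_k$ all lie on a common line $l$ not containing any $v_i$. First I would handle the case where $v_1 - w_1 \neq 0$ (if all $v_i - w_i = 0$, then $v_i = w_i$ for all $i$ and the group is already a direct product $\mathbb{Z}^2 \times F_k$, hence visibly special). In the nontrivial case, after applying an element of $\mathrm{GL}_2(\mathbb{Z})$ we may assume that $l$ is a coordinate axis; more precisely, I would arrange that $v_1 - w_1$ is a multiple of $\binom{1}{0}$, but since we also want $v_1, w_1$ themselves to have a convenient form, the right normalization (as in Lemma \ref{lemcat}) is to further conjugate and rescale so that $v_1 = \binom{k_1}{0}$ and $w_1 = \binom{0}{k_2}$ for nonzero integers $k_1, k_2$. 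This is possible because $\{v_1, w_1\}$ must be linearly independent: if they were dependent, then since they are conjugate (hence have equal translation length in any CAT(0) action) the corollary after Lemma \ref{lem1} together with $v_1 \neq \pm w_1$ (which is forced by $v_1 - w_1$ spanning $l$ while $l \not\ni v_1$) would contradict CAT(0)ness. Write $v_i = \binom{a_i}{b_i}$, $w_i = \binom{c_i}{d_i}$ for $i \geq 2$ in this normalized basis.

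Next I would invoke Lemma \ref{lemcat}: since $G$ is CAT(0) and free-by-cyclic, we get $a_i b_i = c_i d_i$ for each $i = 2, \dots, k$. Now I must verify the hypotheses of Theorem \ref{th3} in this basis, namely
\begin{equation*}
\det[w_1 - v_1, v_i] = \pm \det[w_1 - v_1, w_i], \qquad \det[w_1 + v_1, v_i] = \pm \det[w_1 + v_1, w_i]
\end{equation*}
for $i = 2, \dots, k$. With $w_1 - v_1 = \binom{-k_1}{k_2}$ and $w_1 + v_1 = \binom{k_1}{k_2}$, these determinants are $\det[w_1 - v_1, v_i] = -k_1 b_i - k_2 a_i$, $\det[w_1 - v_1, w_i] = -k_1 d_i - k_2 c_i$, and similarly $\det[w_1 + v_1, v_i] = k_1 b_i - k_2 a_i$, $\det[w_1 + v_1, w_i] = k_1 d_i - k_2 c_i$. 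The parallelism relation (Eq1) from Lemma \ref{lemcat}, namely $k_2(a_i - c_i) = -k_1(b_i - d_i)$, gives $k_2 a_i + k_1 b_i = k_2 c_i + k_1 d_i$, which is exactly $\det[w_1 - v_1, v_i] = \det[w_1 - v_1, w_i]$ (the ``$+$'' sign, in fact). For the second family, I would combine (Eq1) with $a_i b_i = c_i d_i$: set $X = k_2 a_i - k_1 b_i$, $Y = k_2 c_i - k_1 d_i$, so the two quantities we must compare up to sign are $X$ and $Y$; then $X \pm (k_2 a_i + k_1 b_i) $ relations plus $a_i b_i = c_i d_i$ should force $X^2 = Y^2$, i.e. $X = \pm Y$. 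Concretely, $X^2 = k_2^2 a_i^2 - 2 k_1 k_2 a_i b_i + k_1^2 b_i^2$ and $Y^2 = k_2^2 c_i^2 - 2 k_1 k_2 c_i d_i + k_1^2 d_i^2$; using $a_i b_i = c_i d_i$ the cross terms agree, and using (Eq1) in the form $a_i + c_i = -\tfrac{k_1}{k_2}(b_i - d_i) + 2c_i$ — more cleanly, $(a_i - c_i)/k_1 = (d_i - b_i)/k_2$ lets one rewrite $k_2^2(a_i^2 - c_i^2) + k_1^2(b_i^2 - d_i^2) = k_2^2(a_i-c_i)(a_i+c_i) + k_1^2(b_i-d_i)(b_i+d_i)$ and substitute to collapse it. This is exactly the computation already carried out inside the proof of Lemma \ref{lemcat}, run in reverse, so it goes through.

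Once both determinant conditions are checked, Theorem \ref{th3} applies directly and yields that $G$ is virtually special, completing the proof. I expect the only real subtlety to be the bookkeeping in the change of basis: one must make sure that the same $\mathrm{GL}_2(\mathbb{Z})$ transformation simultaneously normalizes $v_1$ and $w_1$ to the forms $\binom{k_1}{0}$ and $\binom{0}{k_2}$, and that virtual specialness (the conclusion of Theorem \ref{th3}) is preserved under the obvious isomorphism of $G$ induced by this basis change — which it is, since it is genuinely an isomorphism of groups. There is also the degenerate sub-case to dispatch where some $v_i - w_i = 0$ but not all: then that generator $s_i$ simply commutes with $v_i = w_i$, and the relevant determinant conditions reduce to $\det[w_1 \mp v_1, v_i] = \det[w_1 \mp v_1, v_i]$ trivially, so no extra work is needed. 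A final remark: the case analysis is cleanly absorbed because Theorem \ref{th3} requires only $\{v_1, w_1\}$ linearly independent, which we established holds automatically here from CAT(0)ness plus the free-by-cyclic normal form.
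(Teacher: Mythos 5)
Your main case ($\{v_1,w_1\}$ linearly independent) is essentially the paper's argument: normalize $v_1=\binom{k_1}{0}$, $w_1=\binom{0}{k_2}$, invoke Lemma \ref{lemcat} to get $a_ib_i=c_id_i$, and verify the two determinant conditions of Theorem \ref{th3}, the first from (Eq1) and the second from (Eq1) together with (Eq2). Your algebraic route to (Eq3) is actually cleaner than the paper's case analysis: setting $P=k_2(a_i-c_i)=-k_1(b_i-d_i)$, one gets $X^2-Y^2=P(X+Y)$ directly while $X-Y=2P$, so $(X-Y)(X+Y)=2P(X+Y)$ forces $P(X+Y)=0$ and hence $X^2=Y^2$. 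That part is correct and complete, and your observation that $v_1-w_1\neq 0$ plus CAT(0)ness plus Theorem \ref{th2} forces $\{v_1,w_1\}$ to be linearly independent is also sound.

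The genuine gap is your dismissal of the case where all $v_i-w_i=0$. The relation $s_iv_is_i^{-1}=v_i$ only says that $s_i$ commutes with the cyclic subgroup $\langle v_i\rangle$, not with all of $\mathbb{Z}^2$, so $G$ is \emph{not} the direct product $\mathbb{Z}^2\times F_k$: it is an iterated HNN extension of $\mathbb{Z}^2$ over the cyclic subgroups $\langle v_i\rangle$ with identity gluings. Already for $k=1$ and $v_1=(2,0)$ the group $\langle a,b,s\mid [a,b]=1,[s,a^2]=1\rangle$ is nonabelian modulo its centralizer of $\mathbb{Z}^2$ (by Britton's lemma $s$ does not commute with $b$), so it cannot be $\mathbb{Z}^2\times\mathbb{Z}$. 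Moreover, Theorem \ref{th3} is unavailable here because it requires $\{v_1,w_1\}$ linearly independent, which fails when $v_1=w_1$. So this case needs its own argument; the paper supplies one by choosing the equitable set $S=\{u_1,z\}$ (where $v_1=u_1^{m_1}$ and $\det[u_1,z]=1$), noting that both ends of every edge space meet each curve of $S$ in the same number of points since $v_i=w_i$, and concluding via Woodhouse's criterion that every immersed wall is non-dilated and hence that $G$ is virtually special. Your "degenerate sub-case" remark correctly handles indices with $v_i=w_i$ when \emph{some other} pair is independent, but it does not cover the situation where no pair is.
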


\begin{proof}
When $v_{i},w_{i}$ are linearly independent for some $i$, we assume $i=1$
without loss of generality. There are linearly independent primitive
elements $e_{1},e_{2}\in \mathbb{Z}^{2}$ such that $v_{1}=e_{1}^{k_{1}}$ and 
$w_{1}=e_{2}^{k_{2}}$ for some positive integers $k_{1},k_{2}.$ After a
change of basis, we assume that 
\begin{equation*}
v_{1}=%
\begin{pmatrix}
k_{1} \\ 
0%
\end{pmatrix}%
,w_{1}=%
\begin{pmatrix}
0 \\ 
k_{2}%
\end{pmatrix}%
.
\end{equation*}%
and 
\begin{equation*}
v_{i}=%
\begin{pmatrix}
a_{i} \\ 
b_{i}%
\end{pmatrix}%
,w_{i}=%
\begin{pmatrix}
c_{i} \\ 
d_{i}%
\end{pmatrix}%
,i=2,...,k.
\end{equation*}%
By Lemma \ref{lemcat}, we have%
\begin{equation}
a_{i}b_{i}=c_{i}d_{i}  \tag{Eq2}
\end{equation}%
for each $i=2,...,k.$ In order to prove that $G$ is virtually special, it is
enough to check the conditions of Theorem \ref{th3}. We assume 
\begin{equation*}
v_{i}=%
\begin{pmatrix}
a_{i} \\ 
b_{i}%
\end{pmatrix}%
,w_{i}=%
\begin{pmatrix}
c_{i} \\ 
d_{i}%
\end{pmatrix}%
,
\end{equation*}%
for any $i=2,...,k.$ It is direct that 
\begin{eqnarray*}
\det [w_{1}-v_{1},v_{i}] &=&\det 
\begin{bmatrix}
-k_{1} & a_{i} \\ 
k_{2} & b_{i}%
\end{bmatrix}%
=-k_{1}b_{i}-k_{2}a_{i} \\
\det [w_{1}-v_{1},w_{i}] &=&\det 
\begin{bmatrix}
-k_{1} & c_{i} \\ 
k_{2} & d_{i}%
\end{bmatrix}%
=-k_{1}d_{i}-k_{2}c_{i}.
\end{eqnarray*}%
By the identity (Eq1), $-k_{2}(a_{i}-c_{i})=k_{1}(b_{i}-d_{i}),$ implying $$%
-k_{1}b_{i}-k_{2}a_{i}=-k_{1}d_{i}-k_{2}c_{i}.$$

It remains to prove $\det [w_{1}+v_{1},v_{i}]=\pm \det [w_{1}+v_{1},w_{i}],$
which is equivalent to 
\begin{equation}
k_{1}b_{i}-k_{2}a_{i}=\pm (k_{1}d_{i}-k_{2}c_{i}).  \tag{Eq3}
\end{equation}%
Consider the identity (Eq1). If $b_{i}-d_{i}=0,$ then $a_{i}-c_{i}=0.$ This
implies that $$k_{1}b_{i}-k_{2}a_{i}=k_{1}d_{i}-k_{2}c_{i},$$ proving (Eq3)
with a positive sign.

If $b_{i}-d_{i}\neq 0,$ the identity (Eq1) gives%
\begin{eqnarray*}
\frac{k_{1}}{k_{2}} &=&\frac{c_{i}-a_{i}}{d_{i}-b_{i}} \\
&=&\frac{a_{i}c_{i}-a_{i}^{2}}{a_{i}b_{i}-a_{i}d_{i}}\text{ if }a_{i}\neq 0.
\end{eqnarray*}%
If $a_{i}=0,$ the identity (Eq1) gives $k_{2}c_{i}=k_{1}(b_{i}-d_{i}).$ The
identity (Eq2) gives that either $c_{i}=0$ or $d_{i}=0.$ For the former, we
have $b_{i}=d_{i},$ proving (Eq3) with positive sign. For the latter, we
have $k_{2}c_{i}=k_{1}b_{i}$, proving (Eq3) with negative sign.

If $a_{i}\neq 0,$ we continue to write $\frac{k_{1}}{k_{2}}$ as
\begin{equation*}
\frac{k_{1}}{k_{2}}=\frac{a_{i}c_{i}-a_{i}^{2}}{a_{i}b_{i}-a_{i}d_{i}}=\frac{%
a_{i}c_{i}-a_{i}^{2}}{c_{i}d_{i}-a_{i}d_{i}}
\end{equation*}%
using (Eq2). If $a_{i}-c_{i}=0,$  (Eq1) implies that $b_{i}=d_{i}$
proving (Eq3) with a positive sign. If $a_{i}-c_{i}\neq 0,$ we have%
\begin{equation*}
\frac{k_{1}}{k_{2}}=\frac{a_{i}}{d_{i}}.
\end{equation*}%
Consider the (Eq2). If $b_{i}=0,$ we have either $c_{i}=0$ or $d_{i}=0.$ For
the former, (Eq1) implies that $-k_{2}a_{i}=-k_{1}d_{i},$ proving the
(Eq3) with a negative sign. For the latter,  (Eq1) implies that $%
-k_{2}(a_{i}-c_{i})=0,$ proving the (Eq3) with a negative sign. If $%
b_{i}\neq 0,$ the (Eq2) implies 
\begin{equation*}
\frac{k_{1}}{k_{2}}=\frac{a_{i}}{d_{i}}=\frac{c_{i}}{b_{i}}.
\end{equation*}%
When $b_{i}+d_{i}= 0,$ we have $a_i +c_i =0$ by (Eq2), which proves (Eq3) with a negative sign. When $b_{i}+d_{i}\neq 0,$ we have%
\begin{eqnarray*}
\frac{k_{1}}{k_{2}} &=&\frac{c_{i}}{b_{i}}=\frac{c_{i}b_{i}+c_{i}d_{i}}{%
b_{i}^{2}+b_{i}d_{i}} \\
\overset{\text{(Eq2)}}{=}\frac{c_{i}b_{i}+a_{i}b_{i}}{b_{i}^{2}+b_{i}d_{i}}
&=&\frac{c_{i}+a_{i}}{b_{i}+d_{i}}.
\end{eqnarray*}%
This finally proves (Eq3) with a negative sign.

When $v_{i},w_{i}$ are parallel for any $i=1,2,...,k,$ there are primitive
elements $u_{i}\in \mathbb{Z}^{2}$ and integers $m_{i},n_{i}$ such that $%
v_{i}=u_{i}^{m_{i}},w_{i}=u_{i}^{n_{i}}.$ When $G$ is $\mathrm{CAT(0)}$, we have $%
m_{i}=\pm n_{i}.$ When $G$ is furthermore free-by-cyclic, Theorem \ref{th2}
implies that $m_i=n_i$ for any $i=2,...,k.$ Choose a primitive
element $z$ such that $\det [u_{1},z]=1.$ Consider the graph of spaces
associated to the tubular group $G.$ Then the set $S=\{u_{1},z\}$ is an equitable
set. The two ends of any edge space have the same intersection numbers
with $z_{i},i=1,2$, both $|m_{i}|=|n_{i}|.$ Therefore, each immersed wall is non-dilated. This proves that $G$ is virtually special by Woodhouse's criterion\cite[Theorem 1.1]{Wood}, using a similar argument as the proof of Theorem \ref{th3}. 
\end{proof}

\begin{corollary}
\label{cor1}Let $G=\langle \mathbb{Z}%
^{2},s_{i}|s_{i}v_{i}s_{i}^{-1}=w_{i},i=1,2,...,k\rangle $ be a
free-by-cyclic tubular group. Then $G$ is $\mathrm{CAT(0)}$ if and only if $G$ is
virtually special.
\end{corollary}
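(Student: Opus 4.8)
The plan is to prove the two implications separately, and to observe that only one of them carries real content. The implication ``$G$ is $\mathrm{CAT(0)}$ $\Rightarrow$ $G$ is virtually special'' is exactly Theorem \ref{thm:fr-cyc-vspel} applied to $G$, since $G$ is a one-vertex free-by-cyclic tubular group; there is nothing further to do. This is the substantive half: it is here that the free-by-cyclic hypothesis is genuinely used (through Lemma \ref{lemcat}, which forces $a_ib_i=c_id_i$ and hence the hypotheses of Theorem \ref{th3}).

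It then remains to establish ``$G$ is virtually special $\Rightarrow$ $G$ is $\mathrm{CAT(0)}$'', and I would argue this as follows (the free-by-cyclic assumption is not needed for this direction). First, by Woodhouse's criterion \cite[Theorem 1.1]{Wood}, a virtually special tubular group acts freely (and properly discontinuously) on a finite-dimensional $\mathrm{CAT(0)}$ cube complex $X$. Next, the $G$-action on $X$ is combinatorial, and combinatorial isometries of $\mathrm{CAT(0)}$ cube complexes are semisimple, so $G$ acts on $X$ by semisimple isometries; in particular the vertex subgroup $\mathbb{Z}^2\le G$ acts on $X$ properly and by semisimple isometries. The Flat Torus Theorem \cite[Theorem II.7.1]{BH} then provides a $\mathbb{Z}^2$-invariant flat $\mathbb{E}^2\subseteq X$ on which $\mathbb{Z}^2$ acts as a translation lattice. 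Writing $A_1,A_2\in\mathbb{R}^2$ for the translation vectors of a basis of $\mathbb{Z}^2$ and $A:=[A_1,A_2]\in\mathrm{GL}_2(\mathbb{R})$, each $v_i$ (resp.\ $w_i$) acts on $\mathbb{E}^2$ by translation along $Av_i$ (resp.\ $Aw_i$); since conjugate isometries have equal translation length, the relation $s_iv_is_i^{-1}=w_i$ forces $\|Av_i\|=\|Aw_i\|$ for every $i$. By Lemma \ref{lem1}, $G$ is $\mathrm{CAT(0)}$, completing the proof.

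The only point I expect to require care is the passage to a semisimple action: Woodhouse's cube complex is merely finite-dimensional, not one on which $G$ acts cocompactly, so one cannot invoke a cocompact form of the Flat Torus Theorem and must instead use the semisimplicity of combinatorial cube-complex isometries to apply it in the proper but non-cocompact setting — precisely the situation already flagged in Remark \ref{remark}, where the ``only if'' direction of Lemma \ref{lem1} is noted to need only properness and semisimplicity. One can also bypass Woodhouse entirely: pass to a finite-index normal subgroup $H\trianglelefteq G$ embedding in a RAAG, let $H$ act freely and semisimply on the universal cover of the corresponding Salvetti complex, co-induce a proper semisimple action of $G$ on a finite metric product of copies of it, and run the same Flat-Torus-plus-Lemma-\ref{lem1} argument, using that a monomial isometry of a metric product permuting equal factors is semisimple whenever its components are.
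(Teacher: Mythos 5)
Your proposal is correct and, for the substantive direction, identical to the paper's: ``$\mathrm{CAT(0)}\Rightarrow$ virtually special'' is exactly an application of Theorem \ref{thm:fr-cyc-vspel}, which is precisely how the paper argues. Where you diverge is the converse: the paper disposes of ``virtually special $\Rightarrow\mathrm{CAT(0)}$'' in one line by citing Wise's result that a virtually special tubular group is $\mathrm{CAT(0)}$ (\cite[Lemma 4.4]{wi14}), whereas you re-derive it from first principles. That is a legitimate, more self-contained route, but your primary version has one soft spot: you assert that the $G$-action on Woodhouse's finite-dimensional cube complex is ``free (and properly discontinuous)'', while \cite[Theorem 1.1]{Wood} only guarantees freeness, and on a cube complex that need not be locally finite a free cellular action need not be metrically proper. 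Metric properness of the $\mathbb{Z}^2$-action is exactly what the Flat Torus Theorem \cite[Theorem II.7.1]{BH} requires; semisimplicity alone (which Bridson's theorem does give you in finite dimensions) is not enough, since without properness the translation-length function on $\mathbb{Z}^2$ need not be a Euclidean norm. Your fallback via co-induction from the RAAG action repairs this cleanly: the Salvetti cover is locally compact, so the finite-index subgroup acts metrically properly and semisimply, the co-induced $G$-action on the finite $\ell^2$-product is again proper, and semisimplicity of the monomial isometries follows because their powers are semisimple and the product space is complete. With that route the argument closes; I would either lead with it or justify properness in the Woodhouse route before relying on it.
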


\begin{proof}
Wise proves that a virtually special tubular group is $\mathrm{CAT(0)}$ (cf.
\cite[Lemma 4.4]{wi14}). Theorem \ref{th4} proves the other direction.
\end{proof}

The following lemma improves Wise's result \cite[Corollary 5.10]{wi14} to the virtual setting. 
\begin{lemma}\label{lem:vir-cocom--cube}
    Let $G$ be a tubular group. 
    \begin{enumerate}
        \item Suppose that there are more than two parallelism classes of edge groups in some vertex group $G_v$. Then $G$ cannot virtually act freely and cocompactly on a $\mathrm{CAT(0)}$ cube complex.
        \item Suppose that $G$ has no Baumslag--Solitar subgroup $\langle a,t\mid ta^mt^{-1} =a^n\rangle $ where $m\neq \pm n$. Then the following are equivalent:
        \begin{enumerate}[label=(\roman*)]
            \item $G$  act freely and cocompactly on a $\mathrm{CAT(0)}$ cube complex.
            \item $G$  virtually act freely and cocompactly on a $\mathrm{CAT(0)}$ cube complex.
            \item There are exactly one or two parallelism classes of edge groups in each vertex group.
        \end{enumerate}
    \end{enumerate}
\end{lemma}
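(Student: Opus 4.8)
The plan is to combine Wise's cocompact cubulation criterion for tubular groups (\cite[Corollary 5.10]{wi14}, which says a tubular group acts freely and cocompactly on a $\mathrm{CAT(0)}$ cube complex iff each vertex group contains at most two parallelism classes of edge groups and some finiteness/equitability data can be chosen cocompactly) with the observation that these parallelism-class counts are, up to finite index, invariants of the group. First I would recall that any finite-index subgroup $G'\le G$ is again tubular: it acts on the same Bass--Serre tree, its vertex groups are finite-index subgroups of the $\mathbb{Z}^2$ vertex groups (hence again $\mathbb{Z}^2$), and its edge groups are finite-index in the $\mathbb{Z}$ edge groups (hence again $\mathbb{Z}$). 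The key point is that conjugate edge groups in $G$ remain conjugate (or stay in the same parallelism class) in $G'$ only up to the finite-index issue, so I must track how parallelism classes behave: a single $\mathbb{Z}^2$ vertex group $G_v$ of $G$ may split into several $G'$-orbits of vertices in the Bass--Serre tree of $G'$, and the incident edge groups get distributed among these; but crucially, passing to a finite-index subgroup can only \emph{decrease or preserve} the number of distinct parallelism classes of edge groups at a given vertex (two non-parallel primitive vectors in $\mathbb{Z}^2$ remain non-parallel in any finite-index subgroup, so the count never drops below what is forced, and it cannot increase because each $G'$-vertex sees a subset of the edges that some $G$-conjugate of $G_v$ saw).

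For part (1): suppose some vertex group $G_v\cong\mathbb{Z}^2$ of $G$ carries at least three pairwise non-parallel edge-group generators $v_1, v_2, v_3$. If $G'\le G$ were a finite-index subgroup acting freely and cocompactly on a $\mathrm{CAT(0)}$ cube complex, then by Wise \cite[Corollary 5.10]{wi14} every vertex group of $G'$ carries at most two parallelism classes of edge groups. But some conjugate of $G_v\cap G'$ is a vertex group of $G'$, and it still sees (finite powers of) the non-parallel vectors $v_1, v_2, v_3$ restricted to the finite-index subgroup $G_v\cap G'$ of $\mathbb{Z}^2$ — these restrictions are nonzero and pairwise non-parallel — contradicting the two-class bound. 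The one subtlety to address carefully: an edge of $\Gamma$ at $v$ might, after passing to $G'$, attach to a \emph{different} vertex orbit than another edge, so I need to argue that at least one $G'$-vertex orbit inherits three non-parallel edge directions; this follows because the cover of the underlying graph of groups is finite, so by pigeonhole some lift of $v$ is incident to lifts of all three original edges (or one can simply pass first to a further finite-index subgroup that is ``full'' at $v$). I expect this bookkeeping — making precise how edge parallelism classes at a vertex are inherited by finite covers of the graph of groups — to be the main obstacle, but it is entirely elementary.

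For part (2): the implication (i)$\Rightarrow$(ii) is trivial. For (ii)$\Rightarrow$(iii): if $G$ virtually acts freely and cocompactly on a $\mathrm{CAT(0)}$ cube complex, then by part (1) no vertex group has more than two parallelism classes, giving (iii) (at least one class since edge groups are nontrivial in a tubular splitting, assuming the splitting is nondegenerate — or this is built into the definition). For (iii)$\Rightarrow$(i): if each vertex group has one or two parallelism classes of edge groups, I want to apply Wise's positive direction of \cite[Corollary 5.10]{wi14}. The hypothesis that $G$ has no Baumslag--Solitar subgroup $\langle a,t\mid ta^mt^{-1}=a^n\rangle$ with $m\ne\pm n$ is exactly what rules out the ``dilated'' behaviour along a single edge: it guarantees that for every edge $e$ with both endpoints' edge-group generators $v_e, w_e$, one has $\#[z, v_e] = \#[z, w_e]$ for the appropriate equitable-set element $z$ (the Baumslag--Solitar relation $t v_e t^{-1} = w_e$ inside a vertex group forces, when combined with the no-BS hypothesis, that $v_e$ and $w_e$ have equal ``lengths'' in the relevant sense), so one can build a genuinely equitable set supported on at most two vectors per vertex and producing a cocompact action by Wise's construction. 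Concretely I would choose, in each vertex group with two parallelism classes $[u], [z]$, the set $S_v = \{u, z\}$ (or suitable primitive representatives), check condition (1) of Wise's lemma edge-by-edge using the no-BS hypothesis, and check condition (2) automatically since $u, z$ span a finite-index subgroup; then invoke Wise \cite[Corollary 5.10]{wi14} to get the free cocompact action on a $\mathrm{CAT(0)}$ cube complex. This completes the cycle (i)$\Rightarrow$(ii)$\Rightarrow$(iii)$\Rightarrow$(i).
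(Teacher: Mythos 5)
Your proposal is correct and follows essentially the same route as the paper: for (1), pass to the Bass--Serre tree, observe that a finite-index subgroup $H$ is again tubular with vertex group $G_v\cap H$ at $[G_v]$ and edge groups $G_{e_i}\cap H$, that pairwise non-parallel edge directions remain pairwise non-parallel after intersecting with a finite-index subgroup of $\mathbb{Z}^2$, and then apply Wise's \cite[Corollary 5.10]{wi14}; for (2), the same cycle (i)$\Rightarrow$(ii)$\Rightarrow$(iii)$\Rightarrow$(i) via part (1) and Wise's criterion. The ``pigeonhole'' subtlety you flag in part (1) is in fact vacuous, since all three edges are incident to the single vertex $[G_v]$ of the tree and hence their $H$-stabilizers all sit inside the one $H$-vertex group $G_v\cap H$, which is exactly how the paper phrases it.
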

\begin{proof}
Let us prove (1) first. By \cite[Corollary 5.10]{wi14}, $G$ cannot act freely and cocompactly on a $\mathrm{CAT(0)}$ cube
complex. Let   $T$ be the corresponding Bass--Serre tree of $G$ and $H$ be any finite index subgroup of $G$. Then the finite-index subgroup $H$ also acts on $T$ cocompactly with $\mathbb{Z}^2$ as vertex groups and $\mathbb{Z}$ as edge groups, hence it is a tubular group. The corresponding Bass--Serre tree of $H$ is again $T$. Suppose $G_v$ is an edge group in $G$ that has more than $3$ parallelism classes of edge groups, say $G_{e_1}, G_{e_2}$ and $G_{e_3}$. View $[G_v] \in G/G_v$ as a vertex in $T$. Then $G_{e_1}, G_{e_2}$ and $G_{e_3}$ represent the stabilizers of three edges that connect to $G_v$. Consider now the action of $H$ on $T$. The stabilizer of $[G_v]$ under the action of $H$ is the finite index subgroup $G_v \cap H$, and the corresponding stabilizer $H_{e_i}$ of  $e_i$ is the finite index subgroup $G_{e_i}\cap H$, $i=1,2,3$. Since $G_{e_1}, G_{e_2}$ and $G_{e_3}$ represent $3$ parallelism classes of edge groups in the vertex group $G_v$, we must also have $H_{e_1}$, $H_{e_2}$, $H_{e_3}$ represent $3$ parallelism classes of edge groups in $H_v$.   Applying \cite[Corollary 5.10]{wi14} again, we have that $H$ cannot act freely and cocompactly on a $\mathrm{CAT(0)}$ cube complex. 

For (2), $(i) $ implies $(ii)$ trivially, while $(ii)$ implies $(iii)$ following from $(1)$ and $(iii)$ implies $(i)$ following from \cite[Corollary 5.10]{wi14}. 

\end{proof}

\begin{corollary}
\label{cor2}Let $G=\langle \mathbb{Z}%
^{2},s_{i}|s_{i}v_{i}s_{i}^{-1}=w_{i},i=1,2\rangle .$ Suppose that $%
\{v_{1},w_{1}\}$ are linearly independent, and $[v_{1},w_{1}]^{-1}v_{2}=%
\begin{pmatrix}
x \\ 
y%
\end{pmatrix}%
,[v_{1},w_{1}]^{-1}w_{2}=%
\begin{pmatrix}
x^{\prime } \\ 
y^{\prime }%
\end{pmatrix}%
.$ Suppose that

\begin{enumerate}
\item $
|x|^{2}+|y|^{2}-2|xy|\cos \phi =|x^{\prime }|^{2}+|y^{\prime
}|^{2}-2|x^{\prime }y^{\prime }|\cos \phi
$
for some $\phi \in (0,\pi )$;

\item  $v_{1}-w_{1}$, $v_{2}-w_{2}$ lie in a line $l$ which doesn't contain any
element of $\{v_{1},v_{2},w_{1},w_{2}\}$;

\item $v_{2}$ or $w_{2}$ does not lie in the union of the two lines $\mathbb{R}v_{1}\cup \mathbb{R}%
w_{1}.$
\end{enumerate}
Then $G$ is a $\mathrm{CAT(0)}$, free-by-cyclic,  virtually non-cocompact special group.
\end{corollary}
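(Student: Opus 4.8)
The plan is to assemble \Cref{cor2} directly from the three structural theorems already established for a multiple HNN extension $G=\langle \mathbb{Z}^{2},s_{1},s_{2}\mid s_{i}v_{i}s_{i}^{-1}=w_{i}\rangle$. First I would invoke \Cref{th1} (= \Cref{main}(1)): hypothesis (1) is literally the condition $|x|^{2}+|y|^{2}-2|xy|\cos\phi=|x'|^{2}+|y'|^{2}-2|x'y'|\cos\phi$ for a common $\phi\in(0,\pi)$ (here $k=2$, so there is just one equation to satisfy), so $G$ is $\mathrm{CAT(0)}$. Next I would invoke \Cref{th2} (= \Cref{main}(2)): hypothesis (2) says $v_{1}-w_{1}$ and $v_{2}-w_{2}$ lie on a common line $l$ containing none of $v_{1},v_{2}$ (it in fact contains none of $v_{1},v_{2},w_{1},w_{2}$), which is exactly Button's criterion as reformulated there, so $G$ is free-by-cyclic.

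For virtual specialness I would run the argument in the proof of \Cref{thm:fr-cyc-vspel}: $G$ is a $\mathrm{CAT(0)}$ free-by-cyclic tubular group, hence virtually special by that theorem. (Alternatively one checks the determinant conditions of \Cref{th3} directly, using that $w_{1}-v_{1}\parallel w_{2}-v_{2}$ from free-by-cyclicness together with the corollary following \Cref{th3}, but citing \Cref{thm:fr-cyc-vspel} is cleanest.) It remains to show $G$ is \emph{not} virtually cocompact special, equivalently (by Corollary~3 / \cite[Corollary 5.10]{wi14} combined with \Cref{lem:vir-cocom--cube}) that $G$ does not virtually act freely and cocompactly on a $\mathrm{CAT(0)}$ cube complex. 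Since $G$ has a single vertex group, by \Cref{lem:vir-cocom--cube}(1) it suffices to exhibit \emph{three distinct parallelism classes} of edge groups in that vertex group $\mathbb{Z}^{2}$. The edge groups are generated by $v_{1},w_{1},v_{2},w_{2}$ (each $s_i$ contributes the two curves $v_i,w_i$ in the vertex torus). By hypothesis $\{v_{1},w_{1}\}$ are linearly independent, giving two distinct parallelism classes $\mathbb{R}v_{1}\ne\mathbb{R}w_{1}$; and by hypothesis (3) at least one of $v_{2},w_{2}$ lies outside $\mathbb{R}v_{1}\cup\mathbb{R}w_{1}$, so it determines a third parallelism class. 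Hence there are at least three parallelism classes of edge groups in $G_v$, and \Cref{lem:vir-cocom--cube}(1) gives that no finite-index subgroup of $G$ acts freely and cocompactly on a $\mathrm{CAT(0)}$ cube complex.

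Putting the pieces together: $G$ is $\mathrm{CAT(0)}$ (\Cref{th1}), free-by-cyclic (\Cref{th2}), virtually special (\Cref{thm:fr-cyc-vspel}), but not virtually cocompact special (\Cref{lem:vir-cocom--cube}(1)). I expect no genuine obstacle here — the statement is a bookkeeping corollary that packages the three theorems with the parallelism-class obstruction — the only point requiring a sentence of care is verifying that ``three parallelism classes of edge groups in the vertex group'' is exactly the hypothesis needed by \Cref{lem:vir-cocom--cube}(1), i.e. that hypotheses (2) and (3) are not in tension (the line $l$ of (2) passes through none of the four vectors, while (3) only constrains $v_{2}$ or $w_{2}$ relative to the two coordinate-type lines $\mathbb{R}v_{1},\mathbb{R}w_{1}$, and the two conditions are easily simultaneously satisfiable, e.g. in the concrete families of \Cref{exam:Gpq-CAT(0)} with $|m|\ne|n|$).
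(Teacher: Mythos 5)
Your proposal is correct and follows exactly the paper's own route: \Cref{main}(1) for CAT(0)ness from condition (1), \Cref{main}(2) for free-by-cyclicness from condition (2), \Cref{thm:fr-cyc-vspel} for virtual specialness, and \Cref{lem:vir-cocom--cube}(1) applied via condition (3) to rule out virtual cocompact cubulation. Your extra sentence spelling out why conditions on $\{v_1,w_1\}$ and (3) yield three parallelism classes of edge groups is a slight elaboration of what the paper leaves implicit, but it is the same argument.
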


\begin{proof}
According to Theorem \ref{main}, Conditions 1) and 2) imply that
the group $G$ is $\mathrm{CAT(0)}$ and free-by-cyclic. Theorem \ref{thm:fr-cyc-vspel} implies that it is virtually special. It remains to show that $G$ is not virtually cocompactly cubulated. By \Cref{lem:vir-cocom--cube} (1),  Condition (3) implies that $G$ cannot be virtually cocompactly 
special. 
\end{proof}

Let now $G(\{p_i,q_i\}_{i=1}^{n-1})=F_{n}\rtimes _{\phi }\mathbb{Z}$ be as  defined in the introduction. Theorem \ref{th7} follows from Corollary \ref{cor1} and the following theorem. 

\begin{theorem}
\label{th7-1}The group $G(\{p_i,q_i\}_{i=1}^{n-1})$ is virtually
special if and only if either

1) $p_{i}=-q_{i}$ for each $i=1,2,...,n-1$, or

2) $p_{s}\neq -q_{s}$ for some $s$ and 
\begin{equation*}
q_{i}(q_{i}+p_{s}-q_{s})=p_{i}(p_{i}-p_{s}+q_{s})
\end{equation*}%
for each $i=1,2,...,n-1.$
\end{theorem}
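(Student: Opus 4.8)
The plan is to reduce this to Theorem~\ref{th3} (virtual specialness) together with the necessary direction via the $\mathrm{CAT(0)}$ characterization coming from Theorem~\ref{th1} and Lemma~\ref{lemcat}. Recall that $G(\{p_i,q_i\}_{i=1}^{n-1})$ is the tubular group with a single vertex group $\mathbb{Z}^2 = \langle a_0, t\rangle$ and stable letters $a_1,\dots,a_{n-1}$, where rewriting $t a_i t^{-1} = a_0^{p_i} a_i a_0^{q_i}$ as $a_i a_0^{q_i} t a_i^{-1} = a_0^{-p_i} t$ gives edge relations $s_i v_i s_i^{-1} = w_i$ with
\begin{equation*}
v_i = \begin{pmatrix} q_i \\ 1 \end{pmatrix}, \qquad w_i = \begin{pmatrix} -p_i \\ 1 \end{pmatrix}, \qquad i = 1, 2, \dots, n-1.
\end{equation*}
Note $v_i - w_i = \begin{pmatrix} q_i + p_i \\ 0 \end{pmatrix}$, so all the difference vectors lie on the horizontal line, and by Theorem~\ref{th2} the group $G$ is free-by-cyclic precisely when some $p_s + q_s \neq 0$ (so the line is nondegenerate and avoids all $v_i$, which have second coordinate $1$) or when all $p_i = -q_i$ (in which case $v_i = w_i$ and one picks any line not through the $v_i$). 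In either case $G$ is free-by-cyclic, so by Corollary~\ref{cor1} virtual specialness is equivalent to $\mathrm{CAT(0)}$-ness, and it suffices to determine exactly when $G$ is $\mathrm{CAT(0)}$.

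First I would handle case (1): if $p_i = -q_i$ for all $i$, then $v_i = w_i$ for every $i$, so trivially $\|A v_i\| = \|A w_i\|$ for $A$ the identity; by Lemma~\ref{lem1} the group is $\mathrm{CAT(0)}$ (indeed virtually special by the last paragraph of the proof of Theorem~\ref{thm:fr-cyc-vspel}, using an equitable set $\{u_1, z\}$). So assume from now on that $p_s \neq -q_s$ for some fixed $s$; then $\{v_s, w_s\}$ are linearly independent (their determinant is $q_s - (-p_s) = p_s + q_s \neq 0$), and I would apply Theorem~\ref{th1} with the role of index $1$ played by $s$. Concretely, compute $[v_s, w_s]^{-1} v_i = \binom{x_i}{y_i}$ and $[v_s, w_s]^{-1} w_i = \binom{x_i'}{y_i'}$ for each $i$. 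Since $v_s = \binom{q_s}{1}$, $w_s = \binom{-p_s}{1}$, we have $[v_s, w_s]^{-1} = \frac{1}{p_s + q_s}\begin{pmatrix} 1 & p_s \\ -1 & q_s \end{pmatrix}$, giving
\begin{equation*}
\begin{pmatrix} x_i \\ y_i \end{pmatrix} = \frac{1}{p_s + q_s} \begin{pmatrix} q_i + p_s \\ q_s - q_i \end{pmatrix}, \qquad \begin{pmatrix} x_i' \\ y_i' \end{pmatrix} = \frac{1}{p_s + q_s} \begin{pmatrix} p_s - p_i \\ q_s + p_i \end{pmatrix}.
\end{equation*}
By Theorem~\ref{th1}, $G$ is $\mathrm{CAT(0)}$ iff there is a common $\phi \in (0,\pi)$ with $|x_i|^2 + |y_i|^2 - 2|x_i y_i|\cos\phi = |x_i'|^2 + |y_i'|^2 - 2|x_i' y_i'|\cos\phi$ for all $i$. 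Clearing the common factor $(p_s+q_s)^{-2}$, this reads
\begin{equation*}
(q_i + p_s)^2 + (q_s - q_i)^2 - 2|(q_i+p_s)(q_s-q_i)|\cos\phi = (p_s - p_i)^2 + (q_s + p_i)^2 - 2|(p_s-p_i)(q_s+p_i)|\cos\phi.
\end{equation*}

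The key observation I expect to drive the argument is that the two sides have equal ``quadratic-form'' part: expanding, $(q_i+p_s)^2 + (q_s-q_i)^2 - (p_s-p_i)^2 - (q_s+p_i)^2 = 2 q_i p_s - 2 q_s q_i - 2 q_i^2 \cdot 0 \cdots$, and one checks this difference equals $2\big(p_i(p_i - p_s + q_s) - q_i(q_i + p_s - q_s)\big)$ — precisely twice the quantity in condition (2). So the displayed equation becomes
\begin{equation*}
2\big(p_i(p_i - p_s + q_s) - q_i(q_i + p_s - q_s)\big) = 2\big(|(q_i+p_s)(q_s-q_i)| - |(p_s-p_i)(q_s+p_i)|\big)\cos\phi.
\end{equation*}
Now the mechanism is exactly that of Lemma~\ref{lemcat}: using $|X| - |Y| \le |X - Y|$ applied to $X = (q_i+p_s)(q_s-q_i)$, $Y = (p_s-p_i)(q_s+p_i)$ (whose difference, one computes, is again $\pm$ the condition-(2) quantity), together with $|\cos\phi| < 1$, one gets that the equation can hold for a $\phi\in(0,\pi)$ simultaneously for all $i$ if and only if both sides vanish for every $i$, i.e. iff $q_i(q_i + p_s - q_s) = p_i(p_i - p_s + q_s)$ for all $i$ (and then any $\phi$ works, so no genuine common-$\phi$ constraint survives). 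The forward direction (virtually special $\Rightarrow$ the condition) follows because virtually special implies $\mathrm{CAT(0)}$ by \cite[Lemma 4.4]{wi14} hence implies the displayed identity; the reverse direction follows because the identity forces $\mathrm{CAT(0)}$-ness via Theorem~\ref{th1}, and then Theorem~\ref{th4} (or directly checking the hypotheses of Theorem~\ref{th3} with $z_1 = w_s - v_s$, $z_2 = w_s + v_s$, which the identity is engineered to satisfy) upgrades this to virtual specialness.

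The main obstacle will be the bookkeeping in the algebraic identity linking the three expressions — the quadratic difference of the two sides, the difference $X - Y$ of the products inside the absolute values, and the condition-(2) polynomial — and verifying they all coincide up to sign; this is the analogue of the chain of manipulations in Lemma~\ref{lemcat} (Eq1)–(Eq3), and one must be careful with signs, the degenerate subcases where one of $q_i + p_s$, $q_s - q_i$, $p_s - p_i$, $q_s + p_i$ vanishes (handled exactly as the $a_i = 0$, $b_i = 0$ branches in the proof of Theorem~\ref{thm:fr-cyc-vspel}), and the case $p_s + q_s = 0$ for some \emph{other} index being impossible once we are in case (2) with a fixed witness $s$. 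A clean way to organize this is: (a) show condition (2) is independent of the choice of witness $s$ with $p_s \neq -q_s$ (a symmetry check), (b) prove the master identity $|X|-|Y| \le |X-Y| = |\,p_i(p_i-p_s+q_s) - q_i(q_i+p_s-q_s)\,|$, and (c) conclude via $|\cos\phi|<1$ exactly as in Lemma~\ref{lemcat}. Once (a)–(c) are in place the theorem follows formally from Corollary~\ref{cor1}, Theorem~\ref{th4}, and Theorem~\ref{th1}.
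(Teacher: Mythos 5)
Your proposal is correct and follows essentially the same route as the paper: reduce virtual specialness to $\mathrm{CAT(0)}$-ness via Corollary~\ref{cor1}, apply Theorem~\ref{th1} with the witness index $s$ playing the role of $1$, and conclude from the reverse triangle inequality $\bigl||X|-|Y|\bigr|\le|X-Y|$ together with $|\cos\phi|<1$ that both sides of the resulting equation must vanish. (A minor slip: the quadratic difference equals $2\bigl(q_i(q_i+p_s-q_s)-p_i(p_i-p_s+q_s)\bigr)$, the negative of what you wrote, but since the conclusion is that this quantity vanishes, nothing is affected.)
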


\begin{proof}
Note that $G(\{p_i,q_i\}_{i=1}^{n-1})$ is a tubular group \begin{equation*}
G(\{p_i,q_i\}_{i=1}^{n-1}) =\langle \mathbb{Z}%
^{2},s_{i}|s_{i}v_{i}s_{i}^{-1}=w_{i},i=1,2,...,k\rangle
\end{equation*}%
with 
\begin{equation*}
v_{i}=%
\begin{pmatrix}
q_{i} \\ 
1%
\end{pmatrix}%
,w_{i}=%
\begin{pmatrix}
-p_{i} \\ 
1%
\end{pmatrix}%
,i=1,2,...,n-1.
\end{equation*}%
By Corollary \ref{cor1}, the group is virtually special if and only if it is $\mathrm{CAT(0)}$. If $p_{i}=-q_{i}$ for each $i=1,2,...,n-1,$ it is obvious that the multiple
HNN extension $G(\{p_i,q_i\}_{i=1}^{n-1})$ is $\mathrm{CAT(0)}$. When $p_{s}\neq
-q_{s}$ for some $s,$ without loss of generality we assume $s=1.$ The two
vectors $v_{1},w_{1}$ are linearly independent. Let $[v_{1},w_{1}]$ be the
matrix with columns $v_{1},w_{1}.$ Note that%
\begin{eqnarray*}
\lbrack v_{1},w_{1}]^{-1}v_{i} &=&\frac{1}{p_{1}+q_{1}}%
\begin{pmatrix}
p_{1}+q_{i} \\ 
q_{1}-q_{i}%
\end{pmatrix}%
, \\
\lbrack v_{1},w_{1}]^{-1}w_{i} &=&\frac{1}{p_{1}+q_{1}}%
\begin{pmatrix}
p_{1}-p_{i} \\ 
q_{1}+p_{i}%
\end{pmatrix}%
,i=2,...,n-1.
\end{eqnarray*}%
Theorem \ref{th1} implies that $G(\{p_i,q_i\}_{i=1}^{n-1})$ is $\mathrm{CAT(0)}$
if and only if there is a $\phi \in (0,\pi)$ such that%
\begin{eqnarray*}
&&(p_{1}+q_{i})^{2}+(q_{1}-q_{i})^{2}-2|(p_{1}+q_{i})(q_{1}-q_{i})|\cos \phi \\
&=&(p_{1}-p_{i})^{2}+(q_{1}+p_{i})^{2}-2|(p_{1}-p_{i})(q_{1}+p_{i})|\cos \phi ,
\end{eqnarray*}%
which is equivalent to
\begin{eqnarray*}
q_{i}(q_{i}+p_{1}-q_{1})-p_{i}(p_{i}-p_{1}+q_{1}) = (|(p_{1}+q_{i})(q_{1}-q_{i})|-|(p_{1}-p_{i})(q_{1}+p_{i})| ) cos \phi. 
\end{eqnarray*}%
Note that the right hand side is $ \leq  |-(q_{i}(q_{i}+p_{1}-q_{1})-p_{i}(p_{i}-p_{1}+q_{1}))\cos \phi |$. Therefore, $G(\{p_i,q_i\}_{i=1}^{n-1})$ is $\mathrm{CAT(0)}$ if and only $%
q_{i}(q_{i}+p_{1}-q_{1})-p_{i}(p_{i}-p_{1}+q_{1})=0$.
\end{proof}

\begin{corollary}\label{cor:vir-cpspe}
The group $G(\{p_i,q_i\}_{i=1}^{n-1})$ is virtually compact special if
and only if the set $\{-p_{i},q_{i}:i=1,2,...,n-1\}$ consists of at most two
elements and one of the two conditions of Theorem \ref{th7-1} holds.
\end{corollary}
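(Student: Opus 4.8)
The plan is to translate ``virtually compact special'' into a statement about free cocompact actions on $\mathrm{CAT(0)}$ cube complexes, read the parallelism classes of edge groups directly off the tuple $(\{p_i,q_i\})$, and then feed everything into Theorem~\ref{th7-1} together with Lemma~\ref{lem:vir-cocom--cube}.

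\emph{Step 1 (combinatorics).} As in the proof of Theorem~\ref{th7-1}, realise $G=G(\{p_i,q_i\}_{i=1}^{n-1})$ as a tubular group with a single vertex group $G_v=\mathbb{Z}^2$ and edge groups generated by $v_i=\binom{q_i}{1}$ and $w_i=\binom{-p_i}{1}$. Since two vectors $\binom{x}{1},\binom{y}{1}$ span the same line if and only if $x=y$, the parallelism classes of edge groups inside $G_v$ are in bijection with the distinct values in $\{-p_i,q_i:i=1,\dots,n-1\}$. Hence ``$\{-p_i,q_i\}$ has at most two elements'' is exactly ``the (unique) vertex group of $G$ contains at most two parallelism classes of edge groups''.

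\emph{Step 2 (the two implications).} If $G$ is virtually compact special then it is virtually special, so Theorem~\ref{th7-1} yields one of conditions 1) or 2); and it virtually acts freely and cocompactly on a $\mathrm{CAT(0)}$ cube complex (the universal cover of the finite-index compact special cube complex), so the contrapositive of Lemma~\ref{lem:vir-cocom--cube}(1) forces at most two parallelism classes of edge groups, i.e.\ $\{-p_i,q_i\}$ has at most two elements. Conversely, assume $\{-p_i,q_i\}$ has at most two elements and condition 1) or 2) holds. By Theorem~\ref{th7-1}, $G$ is virtually special, and by Corollary~\ref{cor1} it is $\mathrm{CAT(0)}$; a $\mathrm{CAT(0)}$ group acts properly by semisimple isometries on a $\mathrm{CAT(0)}$ space and therefore contains no Baumslag--Solitar subgroup $\langle a,t\mid ta^mt^{-1}=a^n\rangle$ with $m\neq\pm n$, so the hypothesis of Lemma~\ref{lem:vir-cocom--cube}(2) holds. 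Since there are at most two parallelism classes of edge groups in $G_v$, part~(2) of that lemma gives that $G$ acts freely and cocompactly on a $\mathrm{CAT(0)}$ cube complex, necessarily finite-dimensional. Concretely one may take the two-element equitable set $S_v=\{z_1,z_2\}$ from the proof of Theorem~\ref{thm:fr-cyc-vspel}: the hypotheses make every immersed wall non-dilated, so by \cite[Theorem 1.2]{Wood2} the dual cube complex is finite-dimensional, and the bound on parallelism classes makes the $G$-action on it cocompact. Tracking compactness through Woodhouse's argument \cite[Theorem 1.1]{Wood}, the $G$-quotient of this cube complex admits a compact special finite cover, and hence $G$ is virtually compact special.

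\emph{Main obstacle.} The delicate point is this last step: upgrading ``virtually special and cocompactly cubulated'' to ``virtually \emph{compact} special''. A compact NPC cube complex with virtually special fundamental group need not itself be virtually special, so one cannot merely pass to a finite cover of an arbitrary cocompact model; one has to use the \emph{specific} cube complex dual to the non-dilated wall structure and verify that the special finite cover produced by Woodhouse's construction is finite-to-one over a \emph{compact} complex when the input equitable set has finite index, yields non-dilated walls, and gives a cocompact $G$-action. I would carry this out by revisiting the proofs of \cite[Theorem 1.1]{Wood} and \cite[Theorem 1.2]{Wood2}; the fallback, if any gap appears, is to apply the Haglund--Wise hyperplane criterion directly to a finite cover of $X/G$ to eliminate the four pathologies.
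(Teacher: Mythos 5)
Your argument follows the paper's proof essentially step for step: the same identification of parallelism classes with the distinct values of $\{-p_i,q_i\}$, the same use of Theorem~\ref{th7-1} and Lemma~\ref{lem:vir-cocom--cube} in both directions, and the same observation that $\mathrm{CAT(0)}$ness excludes the bad Baumslag--Solitar subgroups. The one step you flag as the ``main obstacle'' --- upgrading a free cocompact action on a $\mathrm{CAT(0)}$ cube complex to virtual \emph{compact} specialness --- is not something you need to re-derive by tracking through Woodhouse's construction or by running the Haglund--Wise criterion yourself: it is exactly Wise's result \cite[Corollary 5.9]{wi14} that a compactly cubulated tubular group is virtually compact special, which is what the paper cites at this point. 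With that citation in hand your proof is complete and coincides with the paper's.
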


\begin{proof}
Wise proves that a compact cubulated tubular group is virtually compact
special \cite[Corollary 5.9]{wi14}. Moreover, by \Cref{lem:vir-cocom--cube} a tubular group $G$ is virtually
compact cubulated if and only if there is exactly one or two parallelism
classes of edge groups in each vertex group and $G$ contains no the
Baumslag--Solitar group $BS(m,n)=\langle a,t:ta^{n}t^{-1}=a^{m}\rangle $ with 
$n\neq \pm m.$ Note that the group has a presentation of the form
\begin{equation*}
G(\{p_i,q_i\}_{i=1}^{n-1})=\langle \mathbb{Z}%
^{2},s_{i}|s_{i}v_{i}s_{i}^{-1}=w_{i},i=1,2,...,k\rangle
\end{equation*}%
with 
\begin{equation*}
v_{i}=%
\begin{pmatrix}
q_{i} \\ 
1%
\end{pmatrix}%
,w_{i}=%
\begin{pmatrix}
-p_{i} \\ 
1%
\end{pmatrix}%
,i=1,2,...,n-1.
\end{equation*}%
When the tubular group $G(\{p_i,q_i\}_{i=1}^{n-1})$ is virtually
compact special, there are at most two parallelism classes of $v_{i}$ and $%
w_{i},$ implying the set $\{-p_{i},q_{i}:i=1,2,...,n-1\}$ consists of at
most two elements. Since a compact special group is automatically $\mathrm{CAT(0)}$,
one of the two conditions of Theorem \ref{th7-1} holds. Conversely, when the
set $\{-p_{i},q_{i}:i=1,2,...,n-1\}$ consists of at most two elements, there
will be one or two parallelism classes of edge groups. When one of the two
conditions of Theorem \ref{th7-1} holds, the group $G(\{p_i,q_i\}_{i=1}^{n-1})$ is virtually special, hence $\mathrm{CAT(0)}$ by \Cref{cor1}. Since each element in a $\mathrm{CAT(0)}$ group is undistorted, $G(\{p_i,q_i\}_{i=1}^{n-1})$ contains no Baumslag--Solitar group $BS(m,n)$
with $n\neq \pm m.$ Therefore, $G(\{p_i,q_i\}_{i=1}^{n-1})$ is
virtually compact special by  \Cref{lem:vir-cocom--cube} (2).
\end{proof}

The following example shows that the answer to Question \ref{prob2} is negative.
\begin{example}
\label{eg1} Let $\Psi$ be the automorphism of $F_3= \langle a,b,c\rangle$ given by $%
a\rightarrow a,b\rightarrow a^{m}ba^{m},c\rightarrow
a^{n}ca^{n}$ for some nonzero integers $m,n$. Let $G_1 = F_3\rtimes_\Psi \mathbb{Z}$ be the corresponding free-by-cyclic group. As shown in Example \ref{exam:Gpq-CAT(0)},  $G_1$ is a $\mathrm{CAT(0)}$ tubular group. By Corollary \ref{cor1}, it is virtually special. On the other hand, it has the following tubular representation:

$$\langle a,t, b,c\mid [a,t]=1, b^{-1} a^{-m}t b =a^mt, c^{-1} a^{-n}tc = a^nt \rangle.$$

When $|m|\neq |n|$, $a^{-m}t,a^mt, a^{-n}t, a^nt$ forms at least three parallelism
classes of edge groups inside the vertex group generated by $a$ and $t$, so 
$G_1$ is not virtually cocompactly cubulated by \Cref{lem:vir-cocom--cube} (1).

When $|m| = |n|$, $a^{-m}t,a^mt, a^{-n}t, a^nt$ forms exactly  two parallelism
classes of edge groups inside the vertex group generated by $a$ and $t$, so by \Cref{lem:vir-cocom--cube} (2), $G_1$ is a CAT(0) cube group. Moreover, it is virtually compact special by \cite[Corollary 5.9]{wi14} or \Cref{cor:vir-cpspe}.
\end{example}

The following free-by-cyclic group $F_3\rtimes_{\Phi} \mathbb{Z}$ is also from Lyman's paper \cite[Introduction]{Ly}.
\begin{example}
    Let $\Phi$ be the automorphism of $F_3= \langle a,b,c\rangle$ given by $a \to a,b\to a^{-1}ba, c\to a^{-2} ca^2$. Let $G_2 = F_3\rtimes_{\Phi} \mathbb{Z}$ be the corresponding free-by-cyclic group. Up to the inner conjugation by $a$, we can replace $\Phi$ by the following automorphism:
    $$a \to a,b\to b, c\to a^{-1} ca.$$
 So $G_2$ has the following tubular presentation:
 $$\langle a,t, b,c\mid [a,t]=1, btb^{-1}=t, c^{-1} atc = at \rangle.$$
    Replace $a$ by $a'=at$, the tubular presentation of $G$ can be rewrite as:
$$\langle a',t, b,c\mid [a',t]=1, btb^{-1}=t, c^{-1} a'c = a' \rangle.$$
This is already a $\mathrm{RAAG}$. In particular, $G_2$ is the fundamental group of a compact special $\mathrm{CAT(0)}$ cube complex.
\end{example}

\section{Amalgamation of two free-by-cyclic groups}

In this section, we consider the amalgamation of two (virtually)
free-by-cyclic groups. We prove that the amalgamated product of two 
free-by-cyclic groups along a cyclic
subgroup may not be virtually free-by-cyclic.  Therefore, the answer to  \Cref{prob3} is negative.

 It is well known that the class of (virtually)
free-by-cyclic groups is closed under taking subgroups and free products \cite{bfm},
but not closed under taking HNN extensions along cyclic subgroups. It
was not clear whether the class of free-by-cyclic groups is closed under
amalgams along $\mathbb{Z}$ subgroups (see Hagen--Wise \cite[Remark
3.6]{hw}). Let us first provide a necessary condition on when the amalgamated product is again free-by-cyclic.

We need to introduce some terminology.  Suppose that $G$ is decomposed as a semi-direct
product $H\rtimes \mathbb{\langle }a\mathbb{\rangle }$ for a free subgroup $%
H $ and an element $a.$ The free subgroup $H$ is called a free complement of 
$a.$ An element $x\in G$ is primitive if $x$ is an element of a free basis
of some free complement $H.$ An element $g\in G$ is called a generalized retractor if there
exists a decomposition $G=H\rtimes \mathbb{\langle }a\mathbb{\rangle }$ for a
free subgroup $H$ and a \emph{nontrivial} element $a$ such that $g=ha^{i}$
for some $h\in H$ and a nonzero integer $i$. For example, when $G=\mathbb{Z}$%
, any non-zero element is a generalized retractor and $G$ itself is a free
complement (of the trivial element).

\begin{theorem}
\label{th8}Let $G_{1},G_{2}$ be two groups, $a\in G_{1},b\in
G_{2}$ be two elements of infinite order. Suppose that the amalgamated product $G=G_{1}\ast
_{a=b}G_{2}$ is free-by-cyclic. Then $G_1,G_2$ are  free-by-cyclic, and one of the following holds

\begin{enumerate}
\item[1)] both $a,b$ are generalized retractors;

\item[2)] $a$ and $b$ lie in some free complements of $G_{1},G_{2}$ respectively.
Moreover, $a$ or $b$ is primitive.
\end{enumerate}
\end{theorem}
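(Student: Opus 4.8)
The plan is to exploit the action of $G=G_1\ast_{a=b}G_2$ on its Bass--Serre tree $T$ together with the free-by-cyclic structure $G=F\rtimes\langle t\rangle$, where $F=\ker(\chi)$ for the defining epimorphism $\chi\colon G\to\mathbb Z$. First I would show $G_1$ and $G_2$ are themselves free-by-cyclic: restricting $\chi$ to $G_i$, if $\chi(G_i)\ne 0$ then $G_i\cap F$ is a subgroup of the free group $F$, hence free, and $G_i/(G_i\cap F)\cong\chi(G_i)\le\mathbb Z$, so $G_i$ is free-by-cyclic; the case $\chi(G_i)=0$ would force $G_i\le F$ free, which is covered as the degenerate case of a free complement (with $a$ a nontrivial element of the free group $G_i$). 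One should check that $\chi$ cannot vanish on the amalgamated subgroup $\langle a\rangle$ simultaneously with some incompatibility — this is where the dichotomy in the conclusion comes from.

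The core of the argument is to analyze the position of $a$ (equivalently $b$) relative to the splitting $F\rtimes\langle t\rangle$. Since $F$ is the fundamental group of a graph of spaces and acts on $T$, and $G=F\rtimes\langle t\rangle$, I would consider two cases according to whether $\chi(a)=0$ or $\chi(a)\ne 0$. If $\chi(a)=0$, then $\langle a\rangle\le F$; since $F$ acts on $T$ with trivial vertex and edge stabilizers being $F\cap G_i$ (which are free) — actually $F$ acts on $T$ and the quotient graph of groups for $F$ has free vertex groups $F\cap G_i$ and free (cyclic or trivial) edge group $F\cap\langle a\rangle$ — one argues that $a$ lies in the intersection of both vertex groups, so $a$ belongs to a free factor-like position in each $G_i$; more precisely, because $G_i$ is free-by-cyclic via $\chi|_{G_i}$ and $a\in\ker(\chi|_{G_i})$ which is a free complement of any preimage of a generator of $\chi(G_i)$, we get that $a$ and $b$ lie in free complements of $G_1$ and $G_2$ respectively. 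To upgrade ``lie in a free complement'' to ``$a$ or $b$ is primitive'' in conclusion 2), I would use a rank/Euler-characteristic count: comparing $\operatorname{rk}(F)$ with the ranks of $F\cap G_1$, $F\cap G_2$ and the amalgamation, via the formula for how free ranks add across an amalgam over $F\cap\langle a\rangle$, forces $F$ to be obtained from $F\cap G_1$ and $F\cap G_2$ by amalgamating over a maximal cyclic subgroup that is a free factor on at least one side — which is exactly primitivity of $a$ or $b$.

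If instead $\chi(a)\ne 0$, then writing $a = (\text{element of }F)\cdot t^{\chi(a)}$ after conjugating $t$ appropriately, and using that $F\cap G_1$ is free with $G_1 = (F\cap G_1)\rtimes\langle a'\rangle$ for a suitable element $a'$ mapping to a generator of $\chi(G_1)$ — one checks $a$ itself can be taken of the form $h\cdot (a')^{i}$ with $h\in F\cap G_1$, $i\ne0$ — so $a$ is a generalized retractor of $G_1$; symmetrically $b$ is a generalized retractor of $G_2$, giving conclusion 1). The case division $\chi(a)=0$ vs.\ $\chi(a)\ne0$ is clean because $\chi(a)=\chi(b)$ automatically. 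The main obstacle I anticipate is the primitivity upgrade in case 2): showing that at least one of $a,b$ is part of a free basis (not merely of a free complement) requires controlling how the free group $F$ decomposes as an amalgam of the $F\cap G_i$ over $F\cap\langle a\rangle$, and arguing that if \emph{neither} $a$ nor $b$ were primitive then $F$ would fail to be free (e.g.\ by producing a Baumslag--Solitar-type relation or a surface-group relation in $F$, contradicting freeness). Handling this will likely use Bass--Serre theory for the $F$-action on $T$ together with Kurosh-type or Grushko-type rank arguments, and possibly a Stallings-folding analysis of how the cyclic amalgamating subgroup sits inside the free vertex groups.
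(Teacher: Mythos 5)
Your proposal follows essentially the same route as the paper: restrict the defining epimorphism $\phi\colon G\to\mathbb Z$ to the factors to see that $G_1,G_2$ are free-by-cyclic, then split on $\phi(a)\neq 0$ (giving generalized retractors) versus $\phi(a)=0$ (placing $a,b$ in the free kernels, which are free complements, and analyzing the sub-amalgam $\ker\phi|_{G_1}\ast_{a=b}\ker\phi|_{G_2}$ inside the free group $\ker\phi$). The one step you flag as the main obstacle --- that freeness of this cyclic amalgam of free groups forces $a$ or $b$ to be primitive --- is not something a rank or Euler-characteristic count alone can deliver (rank does not detect primitivity); the paper simply invokes the known theorem of Baumslag--Fine--Rosenberger (their Theorem 2.1) that an amalgam of free groups over a cyclic subgroup is free only if the amalgamating subgroup is generated by a primitive element on one side, so your argument closes by citing that result rather than reproving it via foldings.
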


\begin{proof}
When $G_{1}\ast _{a=b}G_{2}$ is free-by-cyclic, there is an epimorphism 
\begin{equation*}
\phi :G_{1}\ast _{a=b}G_{2}\rightarrow \mathbb{Z}
\end{equation*}%
with a free $\ker \phi .$ Since $\ker \phi |_{G_{i}}\leq\ker \phi ,i=1,2,$ we
know that $\ker \phi |_{G_{i}}$ is free as well. Therefore, both $G_1, G_2$ are free-by-cyclic (noting that a free group is free-by-cyclic).

If $\phi (a)\neq 0\in \mathbb{Z}$, we have $\phi |_{\langle a\rangle }$ is
injective. After passing to a finite index subgroup of $\mathbb{Z}$, we may
assume that $\phi |_{G_{1}}$ is surjective. Therefore, 
\begin{equation*}
G_{1}=\ker \phi |_{G_{1}}\rtimes \langle x\rangle
\end{equation*}
for a preimage $x$ of the generator of $\mathrm{Im}(\phi |_{G_{1}})$. In this
case, we have $a=hx^{i}$ for the nonzero integer $i=\phi (a)$ and an element 
$h\in \ker \phi |_{G_{1}}.$ Therefore, $a$ is a generalized retractor. The
same argument shows that $b$ is a generalized retractor.

If $\phi (a)=0\in \mathbb{Z}$, we have $\phi (b)=0\in \mathbb{Z}$. Since $%
\phi $ is surjective, $\phi |_{G_{1}},\phi |_{G_{2}}$ cannot be trivial
simultaneously. Without loss of generality, we assume that $\phi |_{G_{1}}$ is
surjective. Then 
\begin{equation*}
G_{1}=\ker \phi |_{G_{1}}\rtimes \langle x\rangle
\end{equation*}
for a preimage $x$ of the generator of $\mathrm{Im}(\phi |_{G_{1}})\leq$ $\mathbb{%
Z}$. Consider the action of $\ker \phi $ on the Bass--Serre tree $T$ of the
amalgamated product $G_{1}\ast _{a=b}G_{2}.$ Note that each edge stabilizer
is $\mathbb{Z}$. The group $\ker \phi $ is a graph of free groups amalgamated over
cyclic subgroups. Let $v,w$ be the two vertices in the quotient $T/\ker \phi $, corresponding to $G_1,G_2$. The graph group $G([v,w])$ of the segment $[v,w]$ has vertex groups $\ker \phi |_{G_1}, \ker \phi |_{G_2}$, and edge group generated by $a=b$. Since $\ker \phi$ is free, the subgroup $G([v,w])$ is free.
  It is well known that $G([v,w])$ is free only if
one of the subgroups $\langle a\rangle,\langle b\rangle $ is primitive ( cf.  
\cite[Theorem 2.1]{bfr}).
\end{proof}

The following shows that condition 1) in Theorem \ref{th8} is also sufficient.

\begin{lemma}
\label{th8-cor}Let $G_{1},G_{2}$ be two free-by-cyclic groups, $a\in G_{1},b\in
G_{2}$ be two elements of infinite order. When  both $a,b$ are generalized retractors, the amalgamated product $G=G_{1}\ast
_{a=b}G_{2}$ is free-by-cyclic. 
\end{lemma}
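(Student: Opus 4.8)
The plan is to use the hypothesis that $a$ and $b$ are generalized retractors to build a common $\mathbb{Z}$-quotient of $G$ with free kernel, then patch the two kernels together along the amalgamating subgroup via a Bass--Serre / graph-of-groups argument. By definition, since $a$ is a generalized retractor in $G_1$ there is a decomposition $G_1 = H_1 \rtimes \langle x_1 \rangle$ with $H_1$ free and $x_1$ nontrivial, and $a = h_1 x_1^{i}$ for some $h_1 \in H_1$ and nonzero integer $i$; similarly $G_2 = H_2 \rtimes \langle x_2 \rangle$ with $b = h_2 x_2^{j}$. Let $\phi_1 : G_1 \to \mathbb{Z}$ and $\phi_2 : G_2 \to \mathbb{Z}$ be the retraction homomorphisms killing $H_1$, $H_2$ and sending $x_1, x_2$ to $1$, so $\phi_1(a) = i$ and $\phi_2(b) = j$. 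After replacing $x_1$ and $x_2$ by suitable powers times elements of $H_1,H_2$ (equivalently, after rescaling), I would first arrange that $\phi_1(a) = \phi_2(b) = N$ for a common value $N = \mathrm{lcm}(i,j)$ (multiplying $\phi_1$ by $N/i$ and $\phi_2$ by $N/j$ — note multiplying a retraction-type homomorphism by an integer still has free kernel only if we are careful, so more precisely I will pass to the subgroups $\ker(\phi_1 \bmod (N/i))$ etc., or simply build the map directly so that $a \mapsto N \mapsfrom b$). The homomorphisms then agree on $\langle a \rangle = \langle b \rangle$, so they glue to a homomorphism $\phi : G = G_1 \ast_{a=b} G_2 \to \mathbb{Z}$, which is surjective since $N \neq 0$ and we can take $N$ in the image appropriately scaled; in any case $\mathrm{Im}\,\phi$ is a nontrivial subgroup of $\mathbb{Z}$, hence infinite cyclic.

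Next I would show $\ker \phi$ is free. The group $\ker \phi$ acts on the Bass--Serre tree $T$ of the splitting $G = G_1 \ast_{a=b} G_2$; the quotient is a single edge $[v,w]$, and the vertex stabilizers are $\ker\phi \cap G_1 = \ker \phi_1 = H_1$ and $\ker\phi \cap G_2 = \ker \phi_2 = H_2$, both free, while the edge stabilizer is $\ker\phi \cap \langle a \rangle$. Since $\phi(a) = N \neq 0$, the restriction of $\phi$ to $\langle a\rangle \cong \mathbb{Z}$ is injective, so $\ker\phi \cap \langle a \rangle$ is trivial. Therefore $\ker\phi$ is the fundamental group of a graph of groups with free vertex groups and \emph{trivial} edge group, i.e. it is a free product of free groups (by Bass--Serre theory / the structure theorem for groups acting on trees with trivial edge stabilizers), hence free. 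Thus $G = \ker\phi \rtimes \mathbb{Z}$ (the extension splits because $\phi$ maps onto $\mathbb{Z}$ and $\mathbb{Z}$ is free), so $G$ is free-by-cyclic.

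The one point requiring care — and the main obstacle — is the normalization step making $\phi_1(a) = \phi_2(b)$ while \emph{preserving} the property that the kernels are free. Simply postcomposing $\phi_i$ with multiplication by an integer changes the kernel (it becomes $\phi_i^{-1}$ of a subgroup of $\mathbb{Z}$), and I need that new kernel to still be free. This is fine: $\phi_i^{-1}(k\mathbb{Z})$ is a subgroup of $G_i$ containing $H_i$ with quotient $\cong \mathbb{Z}$, and it equals $H_i \rtimes \langle x_i^k \rangle$, so it is again free-by-cyclic with the \emph{same} free kernel $H_i$; thus the rescaled homomorphism $G_i \to \mathbb{Z}$, $h x_i^m \mapsto m$ composed appropriately still has free kernel $H_i$. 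Concretely: define $\psi_1 : G_1 \to \mathbb{Z}$ by $\psi_1 = (N/i)\,\phi_1$ and $\psi_2 = (N/j)\,\phi_2$ where $N = \mathrm{lcm}(i,j)$; then $\psi_1(a) = N = \psi_2(b)$, and $\ker\psi_1 = \ker\phi_1 = H_1$ (indeed $\psi_1(g) = 0 \iff \phi_1(g) = 0$), similarly $\ker\psi_2 = H_2$. Hence the glued map $\phi : G \to \mathbb{Z}$ has $\phi|_{G_i} = \psi_i$ with free kernels, and the Bass--Serre argument above goes through verbatim. The remaining verifications (well-definedness of the gluing, surjectivity onto $\mathrm{Im}\,\phi \cong \mathbb{Z}$, and the splitting of the extension) are routine, so I would state them briefly and conclude that $G$ is free-by-cyclic.
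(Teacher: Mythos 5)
Your proposal is correct and follows essentially the same route as the paper: use the generalized-retractor decompositions to build a homomorphism $\phi:G\to\mathbb{Z}$ agreeing on $\langle a\rangle=\langle b\rangle$ and nonzero there, then observe via the action of $\ker\phi$ on the Bass--Serre tree that the edge stabilizers are trivial and the vertex stabilizers are free, so $\ker\phi$ is free. The only (immaterial) difference is the normalization: you rescale to $N=\mathrm{lcm}(i,j)$, while the paper sends $a_1\mapsto i_2$ and $a_2\mapsto i_1$ so that both $a$ and $b$ map to $i_1i_2$.
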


\begin{proof}
    When both $a,b$ are generalized retractors, there exist
decompositions $G_{1}=H_{1}\rtimes \mathbb{\langle }a_{1}\mathbb{\rangle }$, 
$G_{2}=H_{2}\rtimes \mathbb{\langle }a_{2}\mathbb{\rangle }$ for free
subgroups $H_{1},H_{2}$ and $a=h_{1}a_{1}^{i_{1}},b=h_{2}a_{2}^{i_{2}}$ for
some elements $n_{1}\in H_{1},n_{2}\in H_{2}$ and some nonzero integers $%
i_{1},i_{2}.$ Let 
\begin{equation*}
\phi :G_{1}\ast _{a=b}G_{2}\rightarrow \mathbb{Z}
\end{equation*}
be defined by 
\begin{equation*}
\phi (H_{1})=0,\phi (a_{1})=i_{2},\phi (H_{2})=0,\phi (a_{2})=i_{1}.
\end{equation*}%
It is not hard to see that $\phi $ is a well-defined group homomorphism.
Considering the action of $\ker \phi $ on the Bass--Serre tree of $G_{1}\ast
_{a=b}G_{2},$ the edge stabilizer is trivial and the vertex stabilizers are
free (i.e. conjugates of $N_{1},N_{2}$). Therefore, the kernel $\ker \phi $
is a free product of free groups and thus free.
\end{proof}

\begin{remark}
    When both $G_{1},G_{2}$ are free (then $G$ is called a cyclically pinched
one-relator group),  Condition 1) in Theorem \ref{th8} is known to be sufficient by
Baumslag--Fine--Miller--Troeger \cite[Theorem 2]{bfm}. They also show that
the family of (virtually) free-by-cyclic groups is closed under taking
subgroups and free products.
\end{remark}

\begin{example}
Let $G_{1}$ be a free-by-cyclic group that is not abelian. Let $G_{2}=F_{2}\times \mathbb{Z}$ and $b$ be a generator of
the factor $\mathbb{Z}$. For any nontrivial element $a\in \lbrack
G_{1},G_{1}]$ (the commutator subgroup of $G_1$), the amalgamated product $G_{1}\ast
_{a=b}G_{2}$ is not free-by-cyclic.
\end{example}

\begin{proof}
Note that $b\in G_{2}$ commutes with any element in $F_{2}.$ This means that 
$b$ does not lie in a free complement of any element. Suppose
that $G_{1}\ast _{a=b}G_{2}$ is free-by-cyclic. By Theorem \ref{th8}, the
element $a$ has to be a generalized retractor. However, since $a\in \lbrack
G_{1},G_{1}],$ any surjection $\phi :G_{1}\ast _{a=b}G_{2}\rightarrow 
\mathbb{Z}$ must have $\phi (a)=0.$ This is a contradiction.
\end{proof}

\begin{example}
\label{eg2}Let $G_{1}=\langle a,b,s:ab=ba,sas^{-1}=b\rangle .$ The group $%
G_{1}\ast _{ab^{-1}=a}G_{1}$ is not free-by-cyclic.
\end{example}

\begin{proof}
By Theorem \ref{main} (2), the group $G_{1}$ is free-by-cyclic. It is even $F_n$-by-$\mathbb{Z}$ by \Cref{th10}. The element $%
a $ is a generalized retractor while $ab^{-1}$ is not, as any surjection $%
\phi :G_{1}\rightarrow \mathbb{Z}$ with a free kernel must have $\phi (ab^{-1})=0$. This implies any surjection from $G_{1}\ast _{ab^{-1}=a}G_{1} \to \mathbb{Z}$ must map $a$ in the right copy of $G_1$ to $0$. But since $a,b$ are conjugate, the element $b$ in the right copy of $G_1$ must also map to $0$. Therefore, both $a,b$ lie in the kernel and the kernel cannot be free.    
\end{proof}

The following example shows that the condition (2) in \Cref{th8} is not sufficient. 

\begin{example}
    Let $G_1 =\langle a,s \mid sas^{-1} = a^{-1}\rangle $ and $G_2 = \langle b,t\mid tbt^{-1} =b\rangle $, and $G = G_1 \ast_{a=b} G_2$. By the Britton's lemma, the elements $s^2$ and $t$ generate a free subgroup of rank $2$ in $G$, and the intersection $\langle a\rangle \cap \langle s^2,t\rangle =\{1\}$. On the other hand, one checks that the element $a$ commutes with every element in $\langle s^2,t\rangle$, implying that elements $a$ and $[s^2,t]$ generate a free subgroup of rank $2$ in $G$.  For any homomorphism $\phi$ from $G$ to $\mathbb{Z}$, we have $\phi(a)=1, \phi([s^2,t]) =1$. But this means that $\ker(\phi)$ contains a free abelian subgroup of rank $2$ and cannot be free. 
\end{example}

For $F_n$-by-$\mathbb{Z}$ groups, we have a similar characterization.

\begin{corollary}
    Let $G_{1},G_{2}$ be two finitely generated groups, $a\in G_{1},b\in
G_{2}$ be two infinite-order elements. If the amalgamated product $G=G_{1}\ast
_{a=b}G_{2}$ is $F_n$-by-$\mathbb{Z}$, then the groups $G_{1},G_{2}$ are $F_n$-by-$\mathbb{Z}$ and either both $a,b$ are generalized retractors, or $a,b$ lie in free complements of $G_{1},G_{2}$ respectively, with $a$ or $b$ is primitive.
\end{corollary}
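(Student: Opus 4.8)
The plan is to deduce this corollary directly from Theorem~\ref{th8} together with Theorem~\ref{th10} and Lemma~\ref{leml2}. The point is that being $F_n$-by-$\mathbb{Z}$ is the same as being free-by-cyclic with vanishing first $L^2$-Betti number (when the group is finitely generated), so everything should reduce to applying the already-established structural result for free-by-cyclic amalgams and then checking that the $L^2$-condition passes to the factors.

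First I would observe that if $G=G_1\ast_{a=b}G_2$ is $F_n$-by-$\mathbb{Z}$, then in particular $G$ is free-by-cyclic, so Theorem~\ref{th8} immediately gives that $G_1,G_2$ are free-by-cyclic and that one of the two alternatives (both $a,b$ generalized retractors; or $a,b$ in free complements with $a$ or $b$ primitive) holds. It remains only to upgrade ``free-by-cyclic'' to ``$F_n$-by-$\mathbb{Z}$'' for $G_1$ and $G_2$. Since $G_1,G_2$ are finitely generated (by hypothesis) and free-by-cyclic, by Lemma~\ref{leml2} it suffices to show $b_1^{(2)}(G_i)=0$ for $i=1,2$.

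For the $L^2$-Betti number computation I would use the Bass--Serre tree $T$ of the splitting $G=G_1\ast_{a=b}G_2$. The edge group $\langle a\rangle\cong\mathbb{Z}$ is infinite, so $b_1^{(2)}(G_e)=0$, and we may invoke the formula of Chatterji--Hughes--Kropholler \cite[Theorem 1.5]{chk} exactly as in the proof of Theorem~\ref{th10}: since $G$ is finitely generated free-by-cyclic it is $F_n$-by-$\mathbb{Z}$ by hypothesis, so $b_1^{(2)}(G)=0$, and the formula reads
\begin{equation*}
0=b_1^{(2)}(G)=\bigl(b_1^{(2)}(G_1)-\tfrac{1}{|G_1|}\bigr)+\bigl(b_1^{(2)}(G_2)-\tfrac{1}{|G_2|}\bigr)+\tfrac{1}{|G_e|}.
\end{equation*}
Here $G_1,G_2$ are infinite (they contain $a$ of infinite order) and $G_e\cong\mathbb{Z}$ is infinite, so all the $1/|\cdot|$ terms vanish and we get $b_1^{(2)}(G_1)+b_1^{(2)}(G_2)=0$; since $L^2$-Betti numbers are non-negative, both vanish. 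Then Lemma~\ref{leml2} applies to each $G_i$ (they are finitely generated and free-by-cyclic, and satisfy the Atiyah conjecture as in Lemma~\ref{leml2}'s proof), yielding that $G_1,G_2$ are $F_n$-by-$\mathbb{Z}$.

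I do not expect a serious obstacle here; the only point requiring a little care is confirming that the Chatterji--Hughes--Kropholler formula applies, i.e.\ that $b_1^{(2)}(G_e)=0$ (true since $\langle a\rangle\cong\mathbb{Z}$) and that one is entitled to the stated identity --- but this is exactly the situation already handled in the proof of Theorem~\ref{th10}, just with a one-edge graph of groups and $\mathbb{Z}$ in place of $\mathbb{Z}^2$ at the vertices. One should also note that finite generation of $G_1$ and $G_2$ is part of the hypothesis, which is what Lemma~\ref{leml2} needs. Hence the corollary follows.
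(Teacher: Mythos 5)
Your proposal is correct and follows essentially the same route as the paper: apply Theorem \ref{th8} for the structural dichotomy, use the Chatterji--Hughes--Kropholler formula to get $b_1^{(2)}(G_1)+b_1^{(2)}(G_2)=b_1^{(2)}(G)=0$, and then invoke Lemma \ref{leml2} to upgrade each factor from free-by-cyclic to $F_n$-by-$\mathbb{Z}$. Your spelled-out verification that the $1/|G_v|$ and $1/|G_e|$ terms vanish is a correct elaboration of what the paper leaves implicit.
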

\begin{proof}
    It is enough to note that the first $L^2$-Betti number $b^{(2)}_1(G)=b^{(2)}_1(G_1)+b^{(2)}_1(G_2)$ by Chatterji--Hughes--Kropholler \cite[Theorem 1.5]{chk}. The finitely generated free-by-cyclic group $G$ is $F_n$-by-$\mathbb{Z}$ if and only if the first $L^2$-Betti number $b^{(2)}_1(G)=0$ by Lemma \ref{leml2}. If $G$ is $F_n$-by-$\mathbb{Z}$, we have $b^{(2)}_1(G_1)=b^{(2)}_1(G_2)=0$ and $G_1,G_2$ are $F_n$-by-$\mathbb{Z}$. The remaining proof can be finished using Theorem \ref{th8}.
\end{proof}

Now we consider the case of virtually free-by-cyclic groups. Let $G$ be a
virtually free-by-cyclic groups. An element $g\in G$ is called a virtual
generalized retractor if there is a finite index torsion-free subgroup $%
\Gamma \leq G$ and a decomposition $\Gamma =N\rtimes \mathbb{\langle }a\mathbb{%
\rangle }$ for a free subgroup $N$ and a nontrivial element $a,$ such that $%
g=na^{i}$ for some $n\in N$ and nonzero integer $i$. Similarly, the subgroup 
$N$ is called a virtual free complement of the element $a$ if there is a
finite index torsion-free subgroup $\Gamma $ decomposed as $\Gamma =N\rtimes 
\mathbb{\langle }a\mathbb{\rangle }$ for a free subgroup $N$ and an element $%
a.$

\begin{theorem}\label{thm:vir-free-nec}
Let $G_{1},G_{2}$ be two finitely generated groups, $a\in
G_{1},b\in G_{2}$ be two infinite-order elements. Suppose that the amalgamated
product $G_{1}\ast _{a=b}G_{2}$ is virtually free-by-cyclic. Then $G_1,G_2$ are virtually free-by-cyclic and there exists a
positive integer $m$ such that one of the following holds

\begin{enumerate}
\item[1)] both $a^{m},b^{m}$ are virtual generalized retractors;

\item[2)] $a^{m}$ and $b^{m}$ lie in virtual free complements of $%
G_{1},G_{2}$ respectively and $a^{m}$ or $b^{m}$ is primitive.
\end{enumerate}
\end{theorem}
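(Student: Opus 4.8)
The plan is to reduce the virtual statement to the already-established non-virtual statement (Theorem \ref{th8}) by passing to a suitable finite-index torsion-free subgroup and controlling how it intersects the two factors. Suppose $G=G_1\ast_{a=b}G_2$ is virtually free-by-cyclic, so there is a finite-index torsion-free subgroup $\Gamma\leq G$ that is free-by-cyclic. Since $\Gamma$ acts on the Bass--Serre tree $T$ of the splitting, it decomposes as a finite graph of groups whose vertex groups are the $\Gamma$-conjugacy representatives of $\Gamma\cap gG_ig^{-1}$ and whose edge groups are conjugates of $\Gamma\cap g\langle a\rangle g^{-1}$. By Theorem \ref{th8} applied to this induced splitting (or rather to each edge of the quotient graph), the vertex groups $\Gamma\cap G_i$ are free-by-cyclic, hence $G_1,G_2$ are virtually free-by-cyclic. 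The core of the argument is to arrange that $\Gamma$ meets the amalgamating subgroup in a way that produces a single edge amalgam to which Theorem \ref{th8} directly applies.

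The key steps, in order, are as follows. First, I would replace $\Gamma$ by a finite-index normal (in $G$) torsion-free subgroup $N$ that is still free-by-cyclic — a finite-index subgroup of a free-by-cyclic group of finite rank is again free-by-cyclic by Theorem \ref{th10} and the usual Stallings-type argument, so this is harmless. Then $N\cap\langle a\rangle=\langle a^m\rangle$ for some $m\geq 1$ (using that $a$ has infinite order and $[G:N]<\infty$). Second, I would analyze the graph-of-groups decomposition of $N$ coming from its action on $T$: pick an edge $e$ of $T$ stabilized (in $G$) by $\langle a\rangle$, let $e'$ be its image in $T/N$, and let $v,w$ be its endpoints with stabilizers $N\cap G_1$ and $N\cap G_2$ (after conjugating, which we may do freely). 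The edge group is $N\cap\langle a\rangle=\langle a^m\rangle$. The subgroup $N([v,w])=\langle N\cap G_1,\ N\cap G_2\rangle=(N\cap G_1)\ast_{a^m=b^m}(N\cap G_2)$ is a subgroup of the free-by-cyclic group $N$, hence is itself free-by-cyclic (free-by-cyclic groups are closed under subgroups, as noted after Theorem \ref{th8}). Third, I would apply Theorem \ref{th8} to this amalgam $(N\cap G_1)\ast_{a^m=b^m}(N\cap G_2)$: it yields that either both $a^m$ and $b^m$ are generalized retractors in $N\cap G_1$, $N\cap G_2$ respectively — which is exactly the statement that they are virtual generalized retractors in $G_1,G_2$ with the witnessing finite-index subgroups $N\cap G_i$ — or $a^m,b^m$ lie in free complements of $N\cap G_1$, $N\cap G_2$ with one of them primitive, which is case 2). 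Since $N\cap G_i$ has finite index in $G_i$ and is torsion-free, these are precisely the notions of virtual generalized retractor and virtual free complement defined just above the theorem.

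The main obstacle I anticipate is bookkeeping with the Bass--Serre tree: making sure that after passing to $N$ one genuinely recovers an amalgam over $\langle a^m\rangle=\langle b^m\rangle$ rather than over a smaller or differently-embedded subgroup, and that the vertex groups one gets really are $N\cap G_i$ up to conjugacy with the edge inclusions matching up. Concretely, one must check that $N\cap G_1$ and $N\cap G_2$ are joined along a common edge in $T/N$ whose stabilizer is $N\cap\langle a\rangle$ — this requires choosing the edge $e$ of $T$ so that its $G$-stabilizer is exactly $\langle a\rangle$ (not a conjugate lying in only one factor) and verifying that $N$ acts transitively enough on the relevant part of $T$; since $N$ is normal of finite index this is routine but needs to be spelled out. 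A secondary point is confirming that ``$a^m$ or $b^m$ is primitive'' in case 2) transfers to the conclusion as stated; this is immediate since primitivity is a property internal to $N\cap G_i$, which is the witnessing finite-index subgroup. Everything else — closure of free-by-cyclic under finite-index subgroups and subgroups, and the structure of subgroups acting on trees — is available from the results cited earlier in the paper and the standard theory of graphs of groups.
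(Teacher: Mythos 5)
Your proposal is correct and follows essentially the same route as the paper: pass to a finite-index free-by-cyclic subgroup $\Gamma$, observe that $\langle \Gamma\cap G_1,\ \Gamma\cap G_2\rangle=(\Gamma\cap G_1)\ast_{a^m=b^m}(\Gamma\cap G_2)$ (the paper gets this directly from the normal form theorem rather than from normality and Bass--Serre bookkeeping, so your reduction to a normal core is harmless but unnecessary), note this sub-amalgam is free-by-cyclic as a subgroup of $\Gamma$, and apply Theorem \ref{th8}. The only blemish is the citation of Theorem \ref{th10} for closure of free-by-cyclic groups under (finite-index) subgroups --- that closure is the standard fact from \cite{bfm} quoted at the start of Section 5, not Theorem \ref{th10}.
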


\begin{proof}
Suppose that there is a finite index subgroup $\Gamma \leq G_{1}\ast _{a=b}G_{2}$
and an epimorphism $\phi :\Gamma \rightarrow \mathbb{Z}$ with a free $\ker
\phi .$ Note that $G_{1}\cap \Gamma ,G_{2}\cap \Gamma $ are free-by-cyclic
of finite indices in $G_{1},G_{2},$ respectively. Suppose that $\Gamma \cap
\langle a\rangle =\langle a^{m}\rangle $ for a positive integer $m.$
Moreover, we have 
\begin{equation*}
\Gamma \geq(G_{1}\cap \Gamma )\ast _{a^{m}=b^{m}}(G_{1}\cap \Gamma )
\end{equation*}%
by Britton's lemma. Since $\Gamma $ is free-by-cyclic, the subgroup $%
(G_{1}\cap \Gamma )\ast _{a^{m}=b^{m}}(G_{1}\cap \Gamma )$ is free-by-cyclic
as well. Theorem \ref{th8} implies that either both $a^{m},b^{m}$ are
generalized retractors of $G_{1}\cap \Gamma ,G_{2}\cap \Gamma $ respectively,
or $a^{m}$ and $b^{m}$ both lie in free complements of $G_{1}\cap \Gamma
,G_{2}\cap \Gamma $ respectively and $a^{m}$ or $b^{n}$ is primitive.
\end{proof}

\begin{remark}
Some sufficient conditions for getting virtually free-by-cyclic groups are
obtained by Baumslag--Fine--Miller--Troeger \cite[Theorem 4]{bfm}  and
Hagen--Wise \cite{hw}. 
\end{remark}

\begin{definition}
 Let $H\leq G$ be groups. $H$ is called a virtual retract of $G$, if there exists a finite index subgroup $K\leq G$ such that $H\leq K$ and $H$ is a retract of $K$. 
\end{definition}

\begin{definition}
A group $G$ is called cyclically  virtually retractible if for any infinite
cyclic subgroup $\langle a\rangle \leq G$ there is a finite-index subgroup $H\leq G$
and an epimorphism $f:H\rightarrow \mathbb{Z}$ such that $f|_{H\cap \langle
a\rangle }$ is an isomorphism.
\end{definition}

Clearly, a $\mathrm{(VRC)}$ group is cyclically virtually
retractable. Hagen and Wise \cite[Theorem 3.4]{hw}  prove that a virtually
special group is cyclically virtually retractible.

\begin{lemma}
\label{lemresi}A torsion-free cyclically  virtually retractible group $G$ is residually
finite.
\end{lemma}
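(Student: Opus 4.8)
The plan is to show residual finiteness by using the cyclically virtually retractible hypothesis to separate an arbitrary nontrivial element $g \in G$ from the identity. First I would observe that since $G$ is torsion-free, the element $g$ generates an infinite cyclic subgroup $\langle g \rangle \leq G$. Applying the hypothesis to this subgroup, there is a finite-index subgroup $H \leq G$ containing a finite-power $\langle g^k \rangle = H \cap \langle g \rangle$ together with an epimorphism $f : H \to \mathbb{Z}$ such that $f|_{H \cap \langle g \rangle}$ is injective; in particular $f(g^k) \neq 0$ for the appropriate positive integer $k$.

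The next step handles the case $g \in H$ directly: if $g$ itself lies in $H$, then since $f|_{H\cap\langle g\rangle}$ is an isomorphism onto its image and $g \in H \cap \langle g\rangle$, we get $f(g) \neq 0$, so composing $f$ with a suitable quotient $\mathbb{Z} \to \mathbb{Z}/N\mathbb{Z}$ gives a finite quotient of $H$ in which the image of $g$ is nontrivial. Since $H$ has finite index in $G$, one upgrades this to a finite quotient of $G$ (e.g. by intersecting the relevant finite-index normal subgroup of $H$ with a normal core in $G$) in which $g$ survives. If $g \notin H$, then $g$ is already separated from $1$ by the finite-index subgroup $H$ itself — the coset $gH \neq H$ in the finite set $G/\!\!\operatorname{core}(H)$ does the job. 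Either way, $g$ maps nontrivially to some finite quotient of $G$, so $G$ is residually finite.

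The main obstacle — really the only delicate point — is bookkeeping with indices: the retraction $f$ is defined only on the finite-index subgroup $H$, and $H\cap\langle g\rangle$ may be a proper subgroup $\langle g^k\rangle$ rather than all of $\langle g\rangle$, so one must be careful that the finite quotient detecting $g^k$ (or $g$, when $g \in H$) genuinely detects $g$ and not merely some power. This is why splitting into the two cases $g \in H$ and $g \notin H$ is the clean approach: when $g \in H$ the isomorphism $f|_{H\cap\langle g\rangle}$ applies to $g$ on the nose, and when $g \notin H$ we do not need $f$ at all. Passing from a finite quotient of $H$ to a finite quotient of $G$ is then routine: take a finite-index subgroup of $H$ that is normal in $G$ (intersect the $f$-preimage of $N\mathbb{Z}$ with finitely many conjugates), and observe the resulting finite quotient of $G$ still sees $g$.
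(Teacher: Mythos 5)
Your proposal is correct and follows essentially the same route as the paper: apply the cyclically virtually retractible hypothesis to $\langle g\rangle$, then split into the cases $g\in H$ (where the injectivity of $f|_{H\cap\langle g\rangle}$ gives $f(g)\neq 0$, so a finite-index subgroup of $H$ such as $f^{-1}(2\mathbb{Z})$ misses $g$) and $g\notin H$ (where $H$ itself already separates $g$ from the identity). The only difference is that you spell out the routine passage to a normal finite-index subgroup of $G$, which the paper leaves implicit.
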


\begin{proof}
For any $1\neq g\in G,$ there is a finite index $H\leq G$ and an epimorphism $%
f:H\rightarrow \mathbb{Z}$ such that $f(H\cap \langle g\rangle )=\mathbb{Z}.$
If $g\in H,$ the preimage $f^{-1}(2\mathbb{Z})$ is a finite index subgroup
of $H$ does not contain $g.$ If $g\notin H,$ we are done.
\end{proof}

\begin{lemma}
\label{lemvcr}A torsion-free group $G$ is cyclically  virtually retractible if and only if $%
G$ is $\mathrm{(VRC)}$.
\end{lemma}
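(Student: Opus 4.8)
The plan is to prove the two implications separately. The implication that a $\mathrm{(VRC)}$ group is cyclically virtually retractible is essentially formal (and was already noted above): given an infinite cyclic $\langle a\rangle$, $\mathrm{(VRC)}$ furnishes a finite-index subgroup $H\ge\langle a\rangle$ together with a retraction $r\colon H\to\langle a\rangle$; identifying $\langle a\rangle$ with $\mathbb{Z}$, the map $r$ is an epimorphism $H\to\mathbb{Z}$ whose restriction to $H\cap\langle a\rangle=\langle a\rangle$ is an isomorphism. Conversely, since $G$ is torsion-free every nontrivial cyclic subgroup is infinite, so to verify $\mathrm{(VRC)}$ it suffices to produce, for each infinite-order $a$, a finite-index subgroup $K\le G$ with $\langle a\rangle\le K$ admitting a retraction $K\to\langle a\rangle$ — equivalently, a homomorphism $K\to\mathbb{Z}$ sending $a$ to a generator.

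First I would fix such an $a$ and invoke cyclic virtual retractibility to get a finite-index $H\le G$ and an epimorphism $f\colon H\to\mathbb{Z}$ with $f|_{H\cap\langle a\rangle}$ an isomorphism; writing $H\cap\langle a\rangle=\langle a^{k}\rangle$ and replacing $f$ by $-f$ if necessary, we may assume $f(a^{k})=1$. The obvious obstacle is that $a$ itself need not lie in $H$, and when $k>1$ there is no homomorphism on $\langle H,a\rangle$ that restricts to $f$ and sends $a$ to a generator. The remedy is to first pass to the $a$-invariant subgroup $H'=\bigcap_{i=0}^{k-1}a^{i}Ha^{-i}$: since $a^{k}\in H$ normalizes $H$, the conjugates $a^{i}Ha^{-i}$ depend only on $i\bmod k$, so $H'$ is a finite intersection of finite-index subgroups, is genuinely normalized by $a$, and still satisfies $H'\cap\langle a\rangle=\langle a^{k}\rangle$ (as $a^{k}\in H'$). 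I would then set $K=H'\langle a\rangle=\bigsqcup_{i=0}^{k-1}H'a^{i}$, a finite-index subgroup of $G$ containing $a$.

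The key construction is the \emph{trace} homomorphism $\psi\colon H'\to\mathbb{Z}$, $\psi(x)=\sum_{i=0}^{k-1}f(a^{-i}xa^{i})$; this is well defined because $H'$ is $a$-invariant, and is a homomorphism as a sum of homomorphisms. Two short checks, using only that $\mathbb{Z}$ is abelian and that $a^{k}$ and $x$ lie in $H$, show that $\psi$ is invariant under conjugation by $a$ (i.e. $\psi(axa^{-1})=\psi(x)$) and that $\psi(a^{k})=kf(a^{k})=k$. With these in hand one extends $\psi$ to $\widetilde\psi\colon K\to\mathbb{Z}$ by $\widetilde\psi(xa^{i})=\psi(x)+i$ for $x\in H'$: well-definedness follows from $\psi(a^{k})=k$, and the homomorphism property follows from the conjugation-invariance of $\psi$. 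Since $\widetilde\psi(a)=1$, the composite $\langle a\rangle\hookrightarrow K\xrightarrow{\widetilde\psi}\mathbb{Z}\cong\langle a\rangle$ is the identity, exhibiting $K$ as a finite-index subgroup of $G$ retracting onto $\langle a\rangle$, which proves $\mathrm{(VRC)}$.

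I expect the main obstacle to be conceptual rather than computational: it is recognizing that averaging $f$ over the conjugation action of the finite cyclic group $\langle a\rangle/\langle a^{k}\rangle$ on the $a$-invariant core $H'$ produces a homomorphism whose value on $a^{k}$ is $k$ rather than $1$, which is exactly what permits it to be extended over $K$ with $a$ mapping to a generator. Once that point is identified, the remaining steps — finiteness of $[G:H']$, the coset decomposition of $K$, and the two verifications about $\psi$ — are routine bookkeeping.
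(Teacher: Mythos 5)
Your proof is correct, but the converse direction takes a genuinely different route from the paper. The paper's argument is a two-line reduction: from cyclic virtual retractibility one gets a finite-index $H$ retracting onto $H\cap\langle a\rangle=\langle a^{k}\rangle$, so $\langle a^{k}\rangle$ is a virtual retract of $G$; it then establishes residual finiteness of $G$ (via the preceding Lemma on separating points) precisely so that it can invoke Minasyan's Theorem 1.4, which promotes a virtual retract to any finite-index overgroup of it, here from $\langle a^{k}\rangle$ up to $\langle a\rangle$. You instead prove the needed special case of that promotion by hand: you pass to the $a$-invariant core $H'=\bigcap_{i=0}^{k-1}a^{i}Ha^{-i}$, form $K=H'\langle a\rangle$, and average $f$ over the conjugation action of $\langle a\rangle/\langle a^{k}\rangle$ to get a transfer homomorphism $\psi$ with $\psi(a^{k})=k$, which then extends over $K$ sending $a$ to a generator. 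All the verifications you sketch (conjugation-invariance of $\psi$ using that $\mathbb{Z}$ is abelian and $a^{k}\in H$, well-definedness of the extension from $\psi(a^{k})=k$, the coset decomposition of $K$) go through. What your approach buys is self-containedness — no appeal to Minasyan's theorem and, notably, no need for the residual finiteness step at all; what the paper's approach buys is brevity and the fact that the citation handles the non-torsion-free situation uniformly (as the subsequent Remark in the paper indicates). Both arguments read ``$f|_{H\cap\langle a\rangle}$ is an isomorphism'' as an isomorphism onto the codomain $\mathbb{Z}$, so there is no discrepancy on that point.
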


\begin{proof}
It is obvious that a $\mathrm{(VRC)}$ group is cyclically virtually retractible. Conversely, for a
torsion-free cyclically retractible group $G,$ it is residually finite by Lemma %
\ref{lemresi}. For any subgroup $\mathbb{Z},$ there is a finite index
subgroup $H\leq G$ and an epimorphism $f:H\rightarrow \mathbb{Z}$ such that $%
f|_{H\cap \mathbb{Z}}$ is an isomorphism. This implies that $H\cap \mathbb{Z}$
is a retract of $H.$ Minasyan \cite[Theorem 1.4]{min}  shows that any
finite index extension $\mathbb{Z}\geq H\cap \mathbb{Z}$ is a virtually retract
of $G.$ Therefore, $G$ is $\mathrm{(VRC)}$.
\end{proof}

\begin{remark}
    In general, a group has $\mathrm{(VRC)}$ if and only if it is residually finite and cyclically virtually retractible by  \cite[Lemma 2.3 and Lemma 3.4]{min}.
\end{remark}

\begin{theorem}
\label{th9}
Let $G$ be a free-by-cyclic group and $a\in G$ is not a virtual retractor. Then the amalgamated product $%
G\ast _{ a =t }(F_{2}\times \mathbb{Z})$
(where $t$ is a generator of $\mathbb{Z})$ is not virtually free-by-cyclic.
\end{theorem}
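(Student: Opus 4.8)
The plan is to argue by contradiction using \Cref{thm:vir-free-nec}. Suppose $G \ast_{a=t}(F_2 \times \mathbb{Z})$ is virtually free-by-cyclic. Since $a$ and $t$ are both infinite-order elements, \Cref{thm:vir-free-nec} provides a positive integer $m$ such that either (1) both $a^m$ and $t^m$ are virtual generalized retractors of $G$ and $F_2\times\mathbb{Z}$ respectively, or (2) $a^m$ and $t^m$ lie in virtual free complements of $G$ and $F_2\times\mathbb{Z}$ respectively, with one of them primitive. The strategy is to rule out both alternatives by showing that no power $t^m$ can behave well inside $F_2 \times \mathbb{Z}$, and that no power $a^m$ can behave well inside $G$ given the hypothesis on $a$.

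The key computation is the analysis of $F_2\times\mathbb{Z}$. First I would observe that any finite-index torsion-free subgroup $\Gamma$ of $F_2\times\mathbb{Z}$ still has a central $\mathbb{Z}$: indeed $\Gamma \cap (\{1\}\times\mathbb{Z})$ is an infinite cyclic central subgroup of $\Gamma$, and $\Gamma$ is commensurable with $F_2\times\mathbb{Z}$, hence of the form $F'\times\mathbb{Z}$ for a finitely generated free group $F'$ (a finite-index subgroup of $F_2\times\mathbb{Z}$ containing a finite-index subgroup of the center splits as a direct product by the structure of such subgroups). The element $t$ lies in the center $\{1\}\times\mathbb{Z}$, so $t^m$ lies in the center of $\Gamma$ for any such $\Gamma$. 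Now if $\Gamma = N \rtimes \langle z\rangle$ with $N$ free and $t^m = n z^i$, then centrality of $t^m$ forces $z^i$ (hence $t^m n^{-1}$) to be central; a central element in $N\rtimes\langle z\rangle$ that projects nontrivially to $\langle z\rangle$ would force $N$ abelian, hence $N$ cyclic, which makes $\Gamma$ — and thus $F_2\times\mathbb{Z}$ — virtually $\mathbb{Z}^2$, a contradiction. So in alternative (1), $t^m$ cannot be a virtual generalized retractor of $F_2\times\mathbb{Z}$. For alternative (2), if $t^m$ lies in a virtual free complement $N$ of $F_2\times\mathbb{Z}$, then $t^m$ has trivial centralizer issues reversed: $t^m$ is central in a finite-index subgroup, so its centralizer has finite index, yet an element of a free group $N$ has cyclic centralizer in $N$; combined with the splitting this again forces $N$ cyclic and a contradiction. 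Hence in both alternatives the burden falls entirely on the $G$ side and the element $a$.

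It then remains to show $a^m$ cannot be a virtual generalized retractor of $G$ (which would make $a$ one too, up to the definitions — a power of a virtual retractor is tied to the same data, so I would unwind the definition to get that $a$ itself is a virtual retractor, contradicting the hypothesis). The cleanest route is: if $a^m = n z^i$ in some torsion-free finite-index $\Gamma \le G$ with $\Gamma = N\rtimes\langle z\rangle$, define $\psi:\Gamma\to\mathbb{Z}$ by $\psi(N)=0$, $\psi(z)=1$; then $\psi(a^m)=i\ne 0$, so $\langle a\rangle\cap\Gamma$ maps injectively (with finite-index image) under $\psi$, and passing to $\psi^{-1}(i\mathbb{Z})$ one gets a finite-index subgroup admitting a retraction onto $\langle a\rangle\cap\Gamma$, so $a$ is a virtual retractor — contradicting the assumption. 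The same argument shows $a^m$ cannot lie in a virtual free complement of $G$ either, since lying in a free complement of a finite-index subgroup yields exactly such a retraction. I expect the main obstacle to be the careful bookkeeping with powers and finite-index passages — specifically, verifying that "$a^m$ is a virtual (generalized) retractor" genuinely implies "$a$ is a virtual retractor" in the sense of the definition, since the finite-index subgroup witnessing the retraction must be enlarged to contain $a$ itself, which one handles via the fact that $\langle a \rangle$ is itself finitely generated and Minasyan's result (\cite[Theorem 1.4]{min}) that finite-index overgroups of virtual retracts are virtual retracts.
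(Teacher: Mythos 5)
Your route through \Cref{thm:vir-free-nec} is genuinely different from the paper's proof, and the two \emph{correct} halves of your argument do combine to close it, but as written the proposal contains two false claims, and in each of the two alternatives the contradiction actually comes from the opposite side of the amalgam from the one you emphasize. First, $t^m$ \emph{is} a virtual generalized retractor of $F_2\times\mathbb{Z}$: the decomposition $F_2\times\mathbb{Z}=F_2\rtimes\langle t\rangle$ (trivial action) exhibits $t^m=1\cdot t^m$ with $F_2$ free and $t$ nontrivial. Your supporting assertion that a central element of $N\rtimes\langle z\rangle$ projecting nontrivially to $\langle z\rangle$ forces $N$ abelian is refuted by this very example ($z=t$ is central and $F_2$ is not abelian). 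This is not a cosmetic slip: alternative (1) \emph{cannot} be excluded from the $F_2\times\mathbb{Z}$ side, which is precisely why the hypothesis on $a$ is indispensable --- by \Cref{th8-cor}, if $a$ were a generalized retractor of $G$ the amalgam would in fact be free-by-cyclic. Second, the claim that $a^m$ cannot lie in a virtual free complement of $G$ is false in general: in the Gersten group, $a$ lies in the free kernel $F_3$ and is even primitive there, yet $a$ is not a virtual retractor (\Cref{lem6.11}). Membership in a free complement $N=\ker\psi$ gives $\psi(a^m)=0$, the opposite of a retraction, and since $N$ has infinite index in $G$, a retraction of (a finite-index subgroup of) $N$ onto $\langle a^m\rangle$ does not witness a virtual retract of $G$.

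What saves the proof is that you also carry out the two arguments that are actually needed: alternative (1) is excluded on the $G$-side (if $a^m=nz^i$ with $i\neq 0$ in $\Gamma=N\rtimes\langle z\rangle$, the projection $\psi$ kills $N$ and is injective on $\langle a\rangle\cap\Gamma$, so $\langle a\rangle\cap\Gamma$ is a virtual retract and then $\langle a\rangle$ is one by \cite[Theorem 1.4]{min} --- correct); and alternative (2) is excluded on the $F_2\times\mathbb{Z}$-side (if $t^m$ lay in a free $N$ with $N\rtimes\langle z\rangle$ of finite index, centrality of $t^m$ in $N$ would force $N$ cyclic, making that finite-index subgroup metabelian, contradicting $F_2\leq F_2\times\mathbb{Z}$ --- correct). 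So the proof can be repaired by deleting the two false claims and keeping these. One further caveat: \Cref{thm:vir-free-nec} assumes $G_1,G_2$ finitely generated, whereas \Cref{th9} does not and is applied in the paper to Baumslag's infinitely generated example. The paper's own proof is more direct and avoids all of this: for any finite-index $\Gamma$ in the amalgam and any epimorphism $f:\Gamma\to\mathbb{Z}$, either $f$ is injective on $\Gamma\cap\langle a\rangle$, contradicting the hypothesis on $a$, or $f(a^k)=f(t^k)=0$ for some $k\neq 0$, in which case $\ker f$ contains $\ker f|_{F_2}\times\langle t^k\rangle$ and hence a copy of $\mathbb{Z}^2$, so it is not free.
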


\begin{proof}
Since $a$ is not a virtual retractor, we have  for any finite index subgroup $H\leq G
$ and any epimorphism $\phi :H\rightarrow \mathbb{Z}$ the intersection $%
\langle a\rangle \cap H$ cannot be mapped  $\mathbb{Z}$ by $\phi $ via isomorphism.

For any finite index subgroup $\Gamma \leq G\ast _{a =t }(F_{2}\times \mathbb{Z})$ and epimorphism $f:\Gamma \rightarrow 
\mathbb{Z},$ we have $\Gamma \cap G$ is finite index in $G$. If $f|_{\Gamma
\cap G\cap \langle a\rangle }$ is injective, then $\mathrm{Im}f |_{\Gamma \cap
G\cap \langle a\rangle }$ is of finite index in the image $\mathbb{Z}$ and $%
f|_{G\cap \Gamma }^{-1}(\mathrm{Im}f |_{\Gamma \cap G\cap \langle a\rangle })\leq G$
would be of finite index, a contradiction to the choice of $a$. If $f_{\Gamma
\cap G\cap \langle a\rangle }$ is not injective, we have that $f(a^{k})=0$
for some nonzero integer $k.$ Then $f(t^{k})=0.$ But this means $\ker f$ contains the
subgroup $\ker f|_{F_{2}}\times \langle t^{k}\rangle ,$ which shows that $%
\ker f$ is not free. Therefore, any finite index subgroup $\Gamma $ is not
free-by-cyclic.
\end{proof}

\begin{example}
Let $G=F_{\infty }\rtimes \mathbb{Z}$ be the infinitely generated  non-residually finite group
constructed by Baumslag \cite{baum}. By Lemma \ref{lemresi}, the group $G$ is not
cyclically  virtually retractible. Theorem \ref{th9} implies that there is an
element $a\in G$ such that the amalgamated product $G\ast _{a =t }(F_{2}\times \mathbb{Z})$, is not virtually
free-by-cyclic. 
\end{example}

The following is an observation about $\mathrm{(VRC)}$ groups.
\begin{lemma}
\label{vrc-euclid}
    Let $G$ be a $\mathrm{(VRC)}$ group. For any infinite-order element $g\in G$, there is an Euclidean space $\mathbb{E}^k$ on which $G$ acts isometrically with $g$ acting as a nontrivial translation on some coordinate line.
\end{lemma}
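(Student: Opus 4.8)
\textbf{Proof plan for Lemma \ref{vrc-euclid}.}
The plan is to turn the virtual retraction onto $\langle g\rangle$ into an isometric action on a Euclidean space by a standard induction/coinduction construction. First I would use the hypothesis: since $G$ is $\mathrm{(VRC)}$ and $g$ has infinite order, there is a finite-index subgroup $H\leq G$ with $\langle g\rangle\leq H$ together with an epimorphism $\rho\colon H\to\langle g\rangle\cong\mathbb{Z}$ which restricts to the identity on $\langle g\rangle$. Let $m=[G:H]$. The subgroup $H$ acts on the real line $\mathbb{R}$ via $\rho$ followed by the standard translation action of $\mathbb{Z}$ on $\mathbb{R}$; under this action $g$ acts as the translation by $1$, a nontrivial translation.

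Next I would induce this up to $G$. Consider $\mathbb{E}^m:=\mathrm{Map}(G/H,\mathbb{R})\cong\mathbb{R}^m$, with $G$ acting by permuting coordinates according to the (left) action on the coset space $G/H$ and simultaneously applying the $H$-action on each fibre; concretely, for $f\colon G/H\to\mathbb{R}$ set $(x\cdot f)(yH)=s(x,yH)\cdot f(x^{-1}yH)$ where $s(x,yH)\in H$ is the appropriate "cocycle" element coming from a fixed set of coset representatives, and $h\cdot r$ for $h\in H$, $r\in\mathbb{R}$ means $r$ translated by $\rho(h)$. This is the usual co-induced action and one checks it is an isometric (affine, in fact translation-on-each-fibre composed with coordinate permutation) action of $G$ on $\mathbb{E}^m$; since all the fibre actions are by translations and the coordinate permutations are by isometries of $\mathbb{R}^m$ fixing the origin's orbit structure, the whole action is by isometries of Euclidean space $\mathbb{E}^m$.

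Finally I would check that $g$ acts as a nontrivial translation on a coordinate line. Take the coset $eH\in G/H$, which is fixed by $g$ since $g\in H$. Hence $g$ preserves the coordinate line $L=\{f:\ f \text{ supported on } eH\}\cong\mathbb{R}$, and on $L$ it acts exactly by the $H$-action via $\rho$, i.e. translation by $\rho(g)=1\ne 0$. Thus $g$ acts as a nontrivial translation on the coordinate line $L$, as required.

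The routine but only mildly delicate point is writing down the coinduced action so that it is manifestly by isometries of $\mathbb{E}^m$ (the cocycle bookkeeping with coset representatives, and verifying the action axioms); the geometric content—that $g$ fixes the coset $eH$ and therefore translates the associated coordinate line by $\rho(g)=1$—is immediate. One could equivalently phrase the whole argument as: $\langle g\rangle$ being a virtual retract of $G$ yields a $G$-map to a finite set of $\langle g\rangle$-lines on one of which $g$ acts by a nontrivial translation, then take the orthogonal sum; I expect either phrasing to occupy only a few lines.
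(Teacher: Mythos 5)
Your proposal is correct and is essentially the paper's own argument: take the finite-index subgroup $H$ retracting onto $\langle g\rangle$, let it act on $\mathbb{R}$ by translations via the retraction, and induce up to $G$ to get an isometric action on $\mathbb{R}^{[G:H]}$ by coordinate permutations composed with translations. The only difference is cosmetic — you spell out the cocycle bookkeeping and the final check that $g$ fixes the coset $eH$ and translates that coordinate line by $\rho(g)\neq 0$, which the paper leaves implicit.
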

\begin{proof}
    Suppose that there is a finite index subgroup $\Gamma
\leq G$ and a retraction $r:\Gamma \rightarrow
\langle g\rangle .$ Let $\langle g\rangle \cong \mathbb{Z}$ act on
the real line $\mathbb{R}$ by translations. Let $\{g_{i}\}_{i=1}^{k}$ be a
system of representatives of left cosets $G/\Gamma $ (with $g_{1}=1$) and $%
\Pi _{G/\Gamma }\mathbb{R}$ the real vector space spanned by the basis $%
\{g_{i}\}_{i=1}^{k}.$  The group $G$ acts on the product $\mathbb{E}^k=\Pi _{G/\Gamma }%
\mathbb{R}$ by $g(\sum g_{i}x_{i})=\sum g_{j}h_{i}x_{i}$ assuming $%
gg_{i}=g_{j}h_{i}$ for some $h_{i}\in \Gamma .$ Since the action of each element $g$ is a composite of translations and permuting coordinates, the action is isometric.
\end{proof}

\begin{lemma}
\label{lem6.11}
Let $G = G(\{p_{i},q_{i}\}_{i=1}^{n-1})$ by the $F_n$-by-$\mathbb{Z}$ group
defined in the Introduction. Suppose that there are no vectors $%
v, w\in \mathbb{R}^{2}$ such that $w\neq 0$ and
\begin{equation*}
\Vert v-p_{i}w\Vert =\Vert v+q_{i}w\Vert ,i=1,2,...,n-1.
\end{equation*}%
Then $G(\{p_{i},q_{i}\}_{i=1}^{n-1})$ does not virtually retract onto the
cyclic subgroup $\langle a_{0}\rangle $. In particular, when $%
n=3, p_{1}=p_{2}=0,q_{1}\neq \pm q_{2}$ are nonzero integers, $a_{0}$ is not a virtual retractor of $G$.
\end{lemma}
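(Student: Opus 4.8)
The plan is to argue by contradiction. Assuming $G$ virtually retracts onto $\langle a_0\rangle$, I would use the retraction to manufacture an isometric action of $G$ on a Euclidean space in which $a_0$ is a nontrivial translation while $\langle a_0,t\rangle$ acts (virtually) as a lattice of translations; this turns the question into the norm identities appearing in the hypothesis, and runs Gersten's type of argument on an honest flat rather than on an arbitrary $\mathrm{CAT(0)}$ space.

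First I would unwind the hypothesis: a virtual retraction provides a finite-index subgroup $\Gamma\le G$ with $a_0\in\Gamma$ (the case $a_0^{m}\in\Gamma$ reduces to this by replacing $a_0$ with $a_0^{m}$) together with a homomorphism $\phi\colon\Gamma\to\mathbb Z$ with $\phi(a_0)\neq 0$. Following the construction in the proof of \Cref{vrc-euclid}, I would induce this up to an isometric $G$-action on $\mathbb E^{N}=\prod_{G/\Gamma}\mathbb R$, where $N=[G:\Gamma]$. The point I want to make explicit is that in this induced action the orthogonal part of every element is a permutation matrix, so it factors through the finite quotient $\bar G=G/K$, where $K$ is the normal core of $\Gamma$; hence every element of $K$ acts by a pure translation, and $a_0$ fixes the $\mathbb R$-factor indexed by the trivial coset and translates it by $\phi(a_0)\neq 0$.

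Next I would restrict to $\mathbb Z^{2}=\langle a_0,t\rangle$ and set $\Lambda=\mathbb Z^{2}\cap K$, a finite-index subgroup acting by translations; this yields a linear map $\widehat b\colon\mathbb R^{2}\to\mathbb R^{N}$ extending the translation homomorphism on $\Lambda$. Since a suitable common power sends every element of $\mathbb Z^{2}$ into $\Lambda$, and since translation length on $\mathbb E^{N}$ is multiplicative on powers (Euclidean isometries are semisimple, with $\ell(\gamma^{d})=d\,\ell(\gamma)$), I get $\ell(u)=\|\widehat b(u)\|$ for all $u\in\mathbb Z^{2}$. Rewriting $ta_it^{-1}=a_0^{p_i}a_ia_0^{q_i}$ as $a_i(a_0^{q_i}t)a_i^{-1}=a_0^{-p_i}t$, the elements $a_0^{q_i}t$ and $a_0^{-p_i}t$ of $\mathbb Z^{2}$ are conjugate, hence have equal translation length; with $v:=\widehat b(t)$ and $w:=\widehat b(a_0)$ this reads $\|v+q_iw\|=\|v-p_iw\|$ for $i=1,\dots,n-1$. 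As $v,w$ span a subspace of dimension at most $2$, I may transport them isometrically into $\mathbb R^{2}$, and the hypothesis then forces $w=\widehat b(a_0)=0$. But then a common power $a_0^{d}\in\Lambda$ acts trivially on $\mathbb E^{N}$, contradicting the fact that it translates the trivial-coset factor by $d\,\phi(a_0)\neq 0$. For the last assertion, with $p_1=p_2=0$ the system becomes $\|v+q_1w\|=\|v\|=\|v+q_2w\|$; expanding the squared norms gives $(q_1-q_2)\|w\|^{2}=0$, so $q_1\neq q_2$ forces $w=0$, and the hypothesis of the lemma is satisfied.

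The main obstacle is precisely the step that produces the Euclidean action with controlled orthogonal parts. A semisimple action on a general $\mathrm{CAT(0)}$ space need not linearize, which is why Gersten's non-$\mathrm{CAT(0)}$ theorem does not by itself exclude virtual retractions; the observation that the induced (coinduced) action on $\prod_{G/\Gamma}\mathbb R$ has permutation orthogonal parts is what lets the flat-torus/translation-length computation go through, after which the argument is linear algebra against the hypothesis. A secondary point to handle with care is the bookkeeping of powers — replacing $a_0$, $t$ and the $a_i$-conjugacy relations by $d$-th powers so that everything genuinely lands in the translation lattice $\Lambda$ — together with the (standard) fact that translation length is exactly multiplicative on powers of a Euclidean isometry.
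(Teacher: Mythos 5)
Your proof is correct and follows essentially the same route as the paper: coinduce the virtual retraction to an isometric $G$-action on $\prod_{G/\Gamma}\mathbb{R}$, extract translation vectors $v,w$ for $t,a_0$, and play the conjugacy relations $a_i(a_0^{q_i}t)a_i^{-1}=a_0^{-p_i}t$ against the norm hypothesis to force $w=0$, contradicting that $a_0$ translates the trivial-coset factor by $\phi(a_0)\neq 0$. The only difference is a technical one: where the paper invokes the Flat Torus Theorem to put the $\langle a_0,t\rangle$-action into translation form, you pass to the normal core of $\Gamma$ and use multiplicativity of translation length on powers to get $\ell(u)=\Vert\widehat{b}(u)\Vert$, which is an equally valid and slightly more self-contained way to reach the same identities.
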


\begin{proof}
Suppose that there is a finite index subgroup $\Gamma
\leq G({\{p_{i},q_{i}\}_{i=1}^{n-1}})$ and a retraction $r:\Gamma \rightarrow
\langle a_{0}\rangle .$ Let $\langle a_{0}\rangle \cong \mathbb{Z}$ act on
the real line $\mathbb{R}$ by translations. By Lemma \ref{vrc-euclid} (and its proof), there is an Euclidean space $\mathbb{E}^{k'}$ on which $G$ acts isometrically with $a_0$ acting as a nontrivial translation. The  flat torus
theorem \cite[Theorem II.7.1]{BH} implies that there is an
Euclidean space $\mathbb{E}^{k}\subseteq \mathrm{Min}(\langle a_{0},t\rangle
)\subseteq \Pi _{G/\Gamma }\mathbb{R}$ ($k\leq 2$), on which $\langle
a_{0},t\rangle \cong \mathbb{Z}^{2}$ acts cocompactly by translations.  Let $%
v,w$ be the translation vectors of $t,a$ respectively. Since $a_{0}$ acts by
non-trivial translation on $\mathbb{R}$, we see that $w\neq 0$ (and thus $%
k>0)$. Since $a_{0}^{-p_{i}}t=a_{i}a_{0}^{q_{i}}ta_{i}^{-1},$ we have $$\Vert
v-p_{i}w\Vert =\Vert v+q_{i}w\Vert $$ for each $i=1,2,...,n.$ (When $k=2,$
the vectors are from $\mathbb{R}^{2}.$ When $k=1,$ we view $\mathbb{R}$ as
a subspace of $\mathbb{R}^{2}$). This is a contradiction to the assumption. Since a circle (centered at the origin and of radius $|v|$) in the plane cannot intersect the line $v+\mathbb{R}w$ at three distinct points, $G$ cannot satisfy the condition when $
n=3, p_{1}=p_{2}=0,q_{1}\neq \pm q_{2}$. 
\end{proof}

The following example answers Question \ref{prob3} in the negative.

\begin{example}\label{corlast}
    Let $G=\langle a,b,c,t:tat^{-1}=a,tbt^{-1}=ba, tct^{-1} =ba^2\rangle $ be the Gersten group. Then  $G\ast _{a=t}(F_{2}\times \mathbb{Z})$ (where $t$ is
the generator of $\mathbb{Z})$ is not virtually free-by-cyclic. In
particular, the family of virtually free-by-cyclic groups is not closed
under taking amalgamation along cyclic subgroups. 
\end{example} 

\begin{proof}
Lemma \ref{lem6.11} shows that $\langle a\rangle $ is not a virtual retractor of $G.$
Theorem \ref{th9} implies that $G\ast _{a=t}(F_{2}\times \mathbb{Z})$
is not virtually free-by-cyclic.
\end{proof}

We can also improve \Cref{corlast} to show that Condition (2) in \Cref{thm:vir-free-nec} is not sufficient.

\begin{example}
    Let $G_1 =\langle a,b,c,t\mid tat^{-1}=a,tbt^{-1}=ba, tct^{-1} =ba^2\rangle $ be the Gersten group, $G_2 = \langle x,s\mid sxs^{-1} = x \rangle \cong \mathbb{Z}^2$. Then the tubular group $G = G_1 \ast_{a=x} G_2$ is not virtually free-by-cyclic. 
\end{example}
\begin{proof}
    Note first that $G$ has the following tubular presentation:
    $$ \langle a,b,c,x, t,s \mid [a,t]=1,btb^{-1}=at, ctc^{-1} =a^2t, sas^{-1} =a\rangle .$$
    Let $T$ be the corresponding Bass--Serre tree.  Suppose that $H$ is a finite index subgroup of $G$. Since $H$ also acts cocompactly on $T$ with vertex stabilizers isomorphic to $\mathbb{Z}^2$ and edge stabilizers isomorphic to $\mathbb{Z}$, the group $H$ is tubular. Let $H\cap \langle a\rangle = \langle a^m\rangle$ for some $m\geq 1$. This means some edge group of the tubular group $H$ will be $\langle a^m\rangle$. Suppose that $H$ is a free-by-cyclic group. By \cite[Theorem 2.1]{bu}, there is a surjection $\phi: H \to \mathbb{Z}$ such that the restriction of $\phi$  to every edge group is injective.  In particular $\phi\mid_{H\cap \langle a\rangle} = \phi\mid_{\langle a^m\rangle}$ is injective. But this means  the restriction map $\phi: G_1\cap H \to \mathbb{Z}$ has $\phi(a^m) \neq 0$. Let $\bar{G}_1$ be the preimage of $\phi(a^m)$ in $G_1\cap H$. Then $\bar{G}_1$ is a finite index subgroup of $G_1\cap H$ hence $G_1$, and $a^m$ is a retractor of $\bar{G}_1$. By \cite[Theorem 1.4]{min}, we have that $a$ is also a virtual retractor of $G_1$. But this contradicts \Cref{lem6.11}.

\end{proof}

\section{$\mathrm{(VRC)}$ and cyclic subgroup separability}

The purpose of this section is to provide a counterexample to Question \ref{Ques-Min} of Minasyan \cite{min}. We first need a lemma on the separability of the cyclic subgroups of mapping tori. The proof of the following lemma is inspired by that of Hughes--Kudlinska \cite[Proposition 2.7]{hk}.

\begin{theorem}
\label{lastthm}
Let $K$ be a finitely generated group whose cyclic subgroups are separable
and $\phi :K\rightarrow K$  be an automorphism. Then any cyclic subgroup of $G=K\rtimes _{\phi }\mathbb{Z}$ is separable.
\end{theorem}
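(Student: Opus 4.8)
The plan is to work with the projection $\pi\colon G\to\mathbb Z$ with kernel $K$, fixing a lift $t\in G$ of $1$, so that conjugation by $t$ induces $\phi$ on $K$. The engine I would use is the elementary fact that, since $K$ is finitely generated, for each $d\ge 1$ there are only finitely many subgroups of $K$ of index $d$; as $\phi$ permutes this finite set, for every finite-index $L\le K$ there is an integer $e=e(L)\ge 1$ with $\phi^{e}(L)=L$, and then $\langle L,t^{e}\rangle=L\rtimes\langle t^{e}\rangle$ is a finite-index subgroup of $G$ meeting $K$ in exactly $L$. A cousin of this, used later, is that for any $\psi\in\operatorname{Aut}(K)$ and any finite-index $L\le K$ the subgroup $\bigcap_{i\in\mathbb Z}\psi^{i}(L)$ is finite-index (the intersection being finite, for the same reason) and $\psi$-invariant. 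First I would record, via the engine, that $G$ is residually finite: a nontrivial element of $G$ outside $K$ dies in a finite cyclic quotient of $\mathbb Z$, while $1\ne k\in K$ is separated by some finite-index $L\le K$ (residual finiteness of $K$ being included in the hypothesis, since $\{1\}$ is cyclic) and hence by $L\rtimes\langle t^{e(L)}\rangle$. Since finite subgroups of residually finite groups are separable, it then remains to treat $C=\langle g\rangle$ with $g$ of infinite order.

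Next I would split into two cases. If $\pi(g)=0$, i.e.\ $g\in K$: given $x\in G\setminus C$, either $\pi(x)\ne 0$ and a finite cyclic quotient of $\mathbb Z$ does the job (its kernel contains $K\supseteq C$ and misses $x$), or $x\in K\setminus C$, in which case the hypothesis that $C$ is separable in $K$ yields a finite-index $L\le K$ with $C\le L\not\ni x$, and the engine upgrades $L$ to the finite-index subgroup $L\rtimes\langle t^{e}\rangle$ of $G$, which still contains $C$ and still misses $x$ because it meets $K$ in $L$. If $\pi(g)=n\ne 0$: write $g=g_{0}t^{n}$ with $g_{0}\in K$ and pass to the finite-index subgroup $G_{n}=\pi^{-1}(n\mathbb Z)=K\rtimes\langle t^{n}\rangle$; since a subgroup separable in a finite-index subgroup is separable in the ambient group, it is enough to separate $C$ inside $G_{n}$. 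Here I would use the change-of-basis isomorphism $K\rtimes_{\psi}\mathbb Z\xrightarrow{\sim}G_{n}$ that fixes $K$ and sends the stable letter $s$ to $g=g_{0}t^{n}$, where $\psi\in\operatorname{Aut}(K)$ is $\phi^{n}$ followed by conjugation by $g_{0}$ (this rests on the identity $(g_{0}t^{n})k(g_{0}t^{n})^{-1}=g_{0}\phi^{n}(k)g_{0}^{-1}$, together with obvious surjectivity and injectivity). This reduces the case to showing that the cyclic subgroup $\langle s\rangle$ generated by a stable letter of $K\rtimes_{\psi}\mathbb Z$ is separable; and for $x=k's^{m}\notin\langle s\rangle$ we have $k'\ne 1$, so residual finiteness of $K$ together with the $\psi$-invariant-subgroup observation above produces a finite-index $\psi$-invariant $N\le K$ with $k'\notin N$, whence $N\rtimes_{\psi}\mathbb Z$ is a finite-index subgroup of $K\rtimes_{\psi}\mathbb Z$ containing $s$ and missing $x$ (as it meets $K$ in $N$).

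The step I expect to be the crux is turning the subgroup $L\le K$ furnished by the hypothesis into a finite-index subgroup of the mapping torus \emph{without losing} $C$: the first thing one tries, $\bigcap_{i}\phi^{i}(L)$, is finite-index and $\phi$-invariant but in general fails to contain $C$. The fix — stabilizing $L$ itself by a suitable power of $\phi$, which is available precisely because $\phi$ permutes the finitely many index-$[K:L]$ subgroups of the finitely generated group $K$, and then using $t^{e}$ in place of $t$ as the stable letter of the new subgroup — is the only genuinely non-formal ingredient. Everything else is bookkeeping with Britton normal forms and subgroup indices, plus the standard fact that finite subgroups of residually finite groups (and separable subgroups of finite-index subgroups) are separable.
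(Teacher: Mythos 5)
Your proposal is correct and follows essentially the same route as the paper: the same case split on whether the generator projects nontrivially to $\mathbb{Z}$, the same trick of stabilizing a finite-index subgroup $L\leq K$ by a power of $\phi$ (using that a finitely generated group has finitely many subgroups of a given index) to build $L\rtimes\langle t^{e}\rangle$, and the same passage to the finite-index subgroup $K\rtimes\langle t^{n}\rangle$ where $\langle g\rangle$ becomes a retract. The only divergence is that where the paper invokes Hsu--Wise \cite[Lemma 3.9]{hw99} to conclude that this retract is separable, you prove that step directly via the change of stable letter and a $\psi$-invariant finite-index subgroup avoiding $k'$ --- a self-contained substitute for the citation, not a different argument.
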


\begin{proof}
Let $t$ be a generator of $\mathbb{Z}$ and suppose that the cyclic subgroup is generated by $g=g_{0}t^{m}$. Now choose an arbitrary element $x=x_{0}t^{n}\in G$ such that $x\not\in \langle g\rangle$. It suffices now to prove that $x$ is separable from the cyclic subgroup $\langle g\rangle $.

If $m\neq 0$,  $g$ is a generalized retractor since the natural projection $%
p:K\rtimes _{\phi }\mathbb{Z\rightarrow Z}$ has $p(g)\neq 0.$ Recall that a cyclic subgroup separable group is residually finite, so is $G$ (as a splitting cyclic extension of $K$). If $|m|$=1, $g$ is a retractor and $G$ is residually finite. So by \cite[Lemma 3.9]{hw99}), $\langle g\rangle $ is separable in $G$. If $|m|>1$, by \cite[Lemma 3.9]{hw99}), we have that $\langle g\rangle $ is separable in finite index subgroup $K\rtimes \langle t^m\rangle$ of $G$. This means $\langle g\rangle $ can be written as a family of finite index subgroups $\{H_i\}_{i\in I}$ of $K\rtimes \langle t^m\rangle$. Hence $\langle g\rangle  = K\rtimes \langle t^m\rangle \cap (\cap_{i\in I} H_i)$. Since $K\rtimes \langle t^m\rangle$ is a finite index subgroup of $G$, $H_i$ are also finite index subgroups of $G$. This shows that $\langle g\rangle $ is separable in $G$.

If $m=0,$ and $n\neq 0.$ Take 
\begin{equation*}
f:K\rtimes _{\phi }\mathbb{Z}~\overset{p}{\mathbb{\rightarrow }} ~\mathbb{%
Z\rightarrow Z}/2n
\end{equation*}%
 to be the composition of $p$ with the natural epimorphism onto $\mathbb{Z}/2n$. We
have $f(g)=0$ but $f(x)\neq 0.$ The kernel $\ker f$ is a finite index
subgroup separating $g$ and $x.$

It remains to consider the case when $m=0$ and $n=0.$ Since $x\notin
\langle g\rangle $ and $K$ is cyclic subgroup separable, there is a finite index subgroup $H$ of $K$ such that $%
g\in H$ but $x\notin H.$ Since $K$ is finitely generated, the set $S$ of
subgroups in $K$ of index $[K,H]$ is finite. The automorphism $\phi $ acts
on this finite set $S$. So there is an integer $M$ such that $\phi ^{M}(H)=H.$ Now the
finite index subgroup $\langle H,t^{M}\rangle \leq G$ contains $g,$ but not $x.$
\end{proof}

Since finitely generated free groups are cyclic subgroup separable, we have the following corollary (see also \cite[Corollary 5.3]{BM06}, \cite[Proposition 2.7]{hk}). 
\begin{corollary}\label{cor:css}
$F_n$-by-$\mathbb{Z}$ groups are cyclic subgroup separable. 
\end{corollary}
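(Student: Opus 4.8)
The plan is to deduce \Cref{cor:css} directly from \Cref{lastthm}. First I would recall that an $F_n$-by-$\mathbb{Z}$ group $G$ is by definition a split extension $G = F_n \rtimes_\phi \mathbb{Z}$ for some automorphism $\phi$ of the finitely generated free group $F_n$ (if one is only given that $G$ surjects to $\mathbb{Z}$ with kernel $F_n$, the extension splits automatically since $\mathbb{Z}$ is free, so $G \cong F_n \rtimes_\phi \mathbb{Z}$ where $\phi$ is the monodromy of a chosen section). Thus $G$ has exactly the shape $K \rtimes_\phi \mathbb{Z}$ required by \Cref{lastthm} with $K = F_n$.

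Next I would invoke the classical fact that finitely generated free groups are cyclic subgroup separable. This is standard: free groups are subgroup separable (LERF) by M.\ Hall's theorem, and in particular every cyclic (hence finitely generated) subgroup of $F_n$ is separable. So the hypothesis of \Cref{lastthm} that ``$K$ is a finitely generated group whose cyclic subgroups are separable'' is satisfied by $K = F_n$.

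Applying \Cref{lastthm} then gives that every cyclic subgroup of $G = F_n \rtimes_\phi \mathbb{Z}$ is separable, which is precisely the statement that $G$ is cyclic subgroup separable. This completes the proof.

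There is essentially no obstacle here — the corollary is a one-line consequence of the theorem once one observes that an $F_n$-by-$\mathbb{Z}$ group is a splitting extension of a free group. The only point worth a sentence is the justification that the extension splits (using freeness of $\mathbb{Z}$) and that free groups are cyclic subgroup separable; both are well known, so the write-up is just: "Finitely generated free groups are cyclic subgroup separable by M.\ Hall's theorem, and an $F_n$-by-$\mathbb{Z}$ group is of the form $F_n \rtimes_\phi \mathbb{Z}$, so the claim follows from \Cref{lastthm}."
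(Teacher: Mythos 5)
Your proposal is correct and matches the paper's argument exactly: the corollary is stated as an immediate consequence of \Cref{lastthm} applied to $K=F_n$, using the standard fact that finitely generated free groups are cyclic subgroup separable. Nothing further is needed.
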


The following example answers Question \ref{Ques-Min} in the negative, by combining Lemma \ref{lem6.11} and Corollary \ref{cor:css}.

\begin{example}
\label{eg7.2}
    Let $G = \langle a,b,c,t \mid tat^{-1}=a,tbt^{-1}=ba, tct^{-1}=ca^2 \rangle$ be the Gersten group. It  has a tubular presentation:
    $$\langle a,b,c,t \mid [a,t]=1,b^{-1}tb=at, c^{-1}tc=a^2t \rangle.$$
    It is cyclic subgroup separable by \Cref{cor:css} but does not have $\mathrm{(VRC)}$ by \Cref{lem6.11}. 
\end{example}

\section{RFRS vs. virtually RFRS, comments and questions}
In this section, we first discuss the subtle difference between RFRF and virtually RFRS, then we provide some open questions. Note that the original version \cite[Problem 5.3]{page} of Question \ref{prob1.1} is stated in terms of RFRS instead of virtually RFRS. We first show that a $\mathrm{CAT(0)}$ free-by-cyclic may not be RFRS.

Recall that in \cite{Ag08} a finitely generated group $G$ is called RFRS if  it has a sequence of finite index normal subgroups $G=G_{0}\geq G_{1}\geq \cdots$
such that: 1) $\cap G_{i}=1$; 2) $G_{i+1}\geq \ker (G_{i}\rightarrow
H_{1}(G_{i})\bigotimes \mathbb{Q})$ for each $i.$ A virtually special group
is virtually RFRS \cite[Theorem 2.2]{Ag08}.

Fisher recently proved that RFRS groups are locally indicable \cite[Proposition 2.7]{fish}. We improve this to the following
(one can get locally indicable by taking $H$ to be finitely generated all $g_{i}=1$).

\begin{lemma}
\label{8.1}
Let $G$ be a $\mathrm{RFRS}$ group. For any (possibly  infinitely generated) non-trivial subgroup  $H$ and
any finitely many elements $g_{1},...,g_{k}\in G,$ there are positive
integers $n_{1},n_{2},...,n_{k}$ such that there is a surjection $f:\langle
H,g_{1}^{n_{1}},...,g_{k}^{n_{k}}\rangle \rightarrow \mathbb{Z}$ satisfying $%
f(H)\neq 0.$
\end{lemma}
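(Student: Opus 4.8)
The plan is to exploit the RFRS chain $G=G_0\geq G_1\geq\cdots$ directly, passing to a deep enough term $G_N$ so that all the data we care about becomes visible in rational homology. First I would fix the non-trivial element $h\in H$ we wish to detect; since $\bigcap_i G_i=1$, choose $N$ with $h\notin G_N$. Inside $G_{N-1}$ the subgroup $G_N$ contains $\ker(G_{N-1}\to H_1(G_{N-1})\otimes\mathbb{Q})$, so the image of $h$ in $V:=H_1(G_{N-1})\otimes\mathbb{Q}$ is non-zero. The idea is then to build $f$ as (a multiple of) the composite $\langle H,g_1^{n_1},\dots,g_k^{n_k}\rangle\hookrightarrow G_{N-1}\to V$ followed by a carefully chosen linear functional $\lambda\colon V\to\mathbb{Q}$, and finally clear denominators to land in $\mathbb{Z}$.

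The main point is to choose $\lambda$ so that $\lambda$ does not vanish on the image of $h$ while still allowing the $g_i$ to be absorbed. Concretely, $V$ is a $\mathbb{Q}$-vector space, the image $\bar h\in V$ is non-zero, and $\lambda$ can be any functional with $\lambda(\bar h)\neq 0$ — the space of such $\lambda$ is non-empty and in fact "generic", so in particular we may pick $\lambda$ avoiding the finitely many hyperplanes $\{\lambda:\lambda(\bar g_j)=0\}$ that happen to be proper, and for the remaining $\bar g_j$ (those with $\lambda(\bar g_j)=0$ forced, i.e.\ $\bar g_j=0$ in $V$) we simply take $n_j=1$. For the $g_j$ with $\bar g_j\neq 0$, replacing $g_j$ by a power only scales $\lambda(\bar g_j)$, so once $\lambda$ is fixed we have $\lambda(\overline{g_j^{n_j}})=n_j\lambda(\bar g_j)$; choosing the $n_j$ to be a common multiple clearing all denominators makes the restriction of $\lambda$ to the subgroup generated by $H$ and the $g_j^{n_j}$ take values in $\mathbb{Z}$. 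Surjectivity onto $\mathbb{Z}$ is then arranged by further rescaling $\lambda$ (dividing by the gcd of the values it actually takes on a generating set), and $f(H)\neq 0$ is preserved because $f(h)$ remains a non-zero integer.

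The step I expect to be the real obstacle is the bookkeeping that $H$ may be infinitely generated: one cannot literally "clear denominators" over an infinite generating set of $H$, since $\lambda$ restricted to the image of $H$ in $V$ a priori takes values in $\mathbb{Q}$, not $\mathbb{Z}$. The way around this is to observe that the image of $H$ in $V$ lands in some subspace, but more robustly: $H$ and finitely many $g_j$ generate a subgroup $L\leq G_{N-1}$, and the image of $L$ in $V$ is a $\mathbb{Q}$-subspace $W$; choose $\lambda$ on $W$ with $\lambda(\bar h)\neq 0$, then $\lambda\colon L\to\mathbb{Q}$ is a homomorphism whose image is a finitely generated subgroup of $\mathbb{Q}$ — indeed it is generated by $\lambda$ of the images of $H$ together with $\lambda(\bar g_j)$, but since $\lambda$ factors through $H_1(L)\otimes\mathbb{Q}$ restricted appropriately we only need finitely many generators coming from a finite generating set of $L$ — wait, $L$ itself need not be finitely generated. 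The honest fix is: the image $\lambda(L)\leq\mathbb{Q}$ is an additive subgroup; if it is finitely generated it is cyclic and we rescale; if not, $\lambda(L)$ is dense, and then we instead precompose with a further finite-index restriction or use that we only need $f(H)\neq0$, not integrality on all of $L$ — so we may replace $\mathbb{Z}$ by its copy generated by $f(h)$ after post-composing with the quotient $\mathbb{Q}\to\mathbb{Q}/(\text{subgroup avoiding }\lambda(\bar h))$. Making this last maneuver precise — producing a genuine surjection onto $\mathbb{Z}$ rather than merely a non-trivial map to $\mathbb{Q}$ — is where the argument needs care, and I would handle it by first passing to the subgroup $L$, noting $L\cap G_N$ has finite index in $L$ with abelian-by-whatever control, and applying the RFRS condition one more level down to $L$ if needed.
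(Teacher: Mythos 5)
Your overall strategy---descend the RFRS chain and detect $H$ in rational first homology---is the same as the paper's, but as written there are two genuine gaps. First, you pick the level $N$ using a single element $h\in H$ with $h\notin G_N$, and then define $f$ via the composite $\langle H,g_1^{n_1},\dots,g_k^{n_k}\rangle\hookrightarrow G_{N-1}\to H_1(G_{N-1})\otimes\mathbb{Q}$. That composite need not exist: only $h$ is known to lie in $G_{N-1}$, other elements of $H$ may have left the chain at an earlier stage, and the $g_j$ themselves need not lie in $G_{N-1}$ at all, so the classes $\bar g_j$ you manipulate are undefined. The correct move is to take $n$ \emph{maximal} with $H\leq G_n$ (this exists because $H\leq G_0$, $H\neq 1$, and $\bigcap_i G_i=1$); then $H\not\leq G_{n+1}$ together with $\ker\bigl(G_n\to H_1(G_n;\mathbb{Q})\bigr)\leq G_{n+1}$ forces $H$ to have nontrivial image in $H_1(G_n;\mathbb{Q})$, and---this is the actual role of the exponents $n_i$, which your argument instead spends on denominator-clearing---the finite index of $G_n$ in $G$ supplies $n_i$ with $g_i^{n_i}\in G_n$, so that all of $\langle H,g_1^{n_1},\dots,g_k^{n_k}\rangle$ lies in the domain of the homology map.

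Second, the difficulty you wrestle with at the end (that $\lambda$ might take values in a non-cyclic, dense subgroup of $\mathbb{Q}$ because $H$ is infinitely generated) is not a real obstruction, and your proposed workarounds do not close it. Since $G$ is finitely generated (RFRS is defined for finitely generated groups) and $G_n$ has finite index, $H_1(G_n;\mathbb{Z})$ is a finitely generated abelian group; composing $G_n\to H_1(G_n;\mathbb{Z})$ with the projection onto its free part sends every subgroup of $G_n$ into a finitely generated free abelian group, in which the image of $H$ is nontrivial (it survives in $H_1(G_n;\mathbb{Q})$, so it is not killed by torsion). A homomorphism from that free abelian image onto $\mathbb{Z}$ which is nonzero on the image of $H$ then finishes the proof with no denominators to clear; this is exactly how the paper concludes.
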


\begin{proof}
 Let $n$ be the integer such that $H\leq G_{n}$ but $H$ is not a subgroup of $%
G_{n+1}.$ Since $G_{n}\leq G$ is finite index, there are integers $n_{i}$
satisfying $g_{i}^{n_{i}}\in G_{n}.$ The epimorphism $f:G_{n}\rightarrow
H_{1}(G_{n};\mathbb{Z})\rightarrow H_{1}(G_{n};\mathbb{Q})$ has $f(H)$
nontrivial, since $\ker f\leq G_{n+1}.$ Now a subgroup of the finitely generated
 abelian group $$H_{1}(G_{n};\mathbb{Z})=H_{1}(G_{n};\mathbb{Z}%
)_{tor}\oplus H_{1}(G_{n};\mathbb{Z})_{free}$$ is finitely generated.
The image of $H$ in the free-part $H_{1}(G_{n};\mathbb{Z})_{free}$ is
nontrivial and thus has a surjection onto $\mathbb{Z}.$ The restriction $f|_{\langle
H,g_{1}^{n_{1}},...,g_{k}^{n_{k}}\rangle}$ satisfies the required property.
\end{proof}

\begin{example}
($\mathrm{CAT(0)}$ free-by-cyclic may not be $\mathrm{RFRS}$) Let $G=F_{2}\rtimes \mathbb{Z}=\langle
a,b,t:tat^{-1}=ab,tbt^{-1}=ab^{2}\rangle .$ Choose $H=F_{2},t.$ For any
positive integer $n,$ the subgroup $\langle H,t^{n}\rangle $ does not have a
surjection onto $\mathbb{Z}$ with nontrivial image of $H.$ Therefore, the group $G$ is not $\mathrm{RFRS}$ (but is indeed virtually $\mathrm{RFRS}$).
\end{example}

In the same vein, Agol asked in his ICM survey paper \cite[Question 11]{Ag14} whether the braid groups $B_n$ are RFRS. Note that any non-trivial subgroup of RFRS groups has a surjection onto $\mathbb{Z}$ by Lemma \ref{8.1}. On the other hand, the commutator subgroups of the braid groups with at least five strings are finitely presented and perfect \cite[Theorem 2.1]{GL69}, so $B_n$ cannot be RFRS if $n\geq 5$ by Lemma \ref{8.1} or \cite[Proposition 2.7]{fish}. It is still open though whether braid groups are virtually RFRS.

\begin{question}
\label{8.3}
    Let $G$ be a $\mathrm{CAT(0)}$ free-by-cyclic group. Is every finitely generated subgroup of $G$ again $\mathrm{CAT(0)}$?
\end{question}
Since a subgroup of a virtually special (resp. virtually RFRS) group is virtually special (resp. virtually RFRS), a negative answer to Question \ref{8.3} would answer the other direction of Question \ref{prob1.1}. 

At this point, it is hard to attack problems related to (infinitely generated free)-by-cyclic groups. It is natural to ask whether such problems can sometimes be reduced to the finite-rank case (we learned the first part of Question \ref{ques:embed} from Richard Wade in May 2023, see also \cite[Conjecture 1.2]{CW24}). 

\begin{question}\label{ques:embed}
    Does a finitely generated  free-by-cyclic group embed into some $F_n$-by-$\mathbb{Z}$ group? Does a hyperbolic finitely generated   free-by-cyclic group embed into some hyperbolic $F_n$-by-$\mathbb{Z}$ group?
\end{question}

It is well-known that we have the following chain of inclusions:
$\mathcal{F}_{vsp}:=$\{virtually special group\} $\subseteq $ $\mathcal{F}%
_{\mathrm{vRFRS}}:=$\{virtually RFRS group\} $\subseteq \mathcal{F}_{\mathrm{VRC}}:=$ \{VRC\} $%
\subseteq \mathcal{F}_{css}:=$ \{Cyclic subgroup separable group\}. Gersten's example
shows that a free-by-cyclic group may not be $\mathrm{(VRC)}$ (see Lemma \ref{lem6.11}), in particular it can not be virtually $\mathrm{RFRS}$. Minasyan \cite[Question 11.5]{min}  asked whether the property $\mathrm{(VRC)}$ is stable under taking amalgamated free products over (virtually) cyclic subgroups.  It is natural to ask the
following question.

\begin{question}
Are the inclusions $\mathcal{F}_{vsp}\subseteq $ $\mathcal{F}_{\mathrm{vRFRS}}\subseteq 
\mathcal{F}_{\mathrm{VRC}}$ proper for the class of finitely generated free-by-cyclic groups? Is
the family $\mathcal{F}_{vsp}$ (resp.  $\mathcal{F}_{\mathrm{vRFRS}},\mathcal{F}_{\mathrm{VRC}}$
) closed under taking amalgamated product over cyclic subgroup
among finitely generated free-by-cyclic groups?
\end{question}


\end{document}